\newtheorem{thm}{Theorem}[section]
\newtheorem{lem}[thm]{Lemma}
\newtheorem{defn}[thm]{Definition}
\newtheorem{prop}[thm]{Proposition}
\newtheorem{cor}[thm]{Corollary}
\newtheorem{ques}[thm]{Question}
\DeclareMathOperator{\Sp}{Sp}
\newcommand{\A}{\mathcal{A}}
\newcommand{\B}{\mathcal{B}}
\newcommand{\C}{\mathcal{C}}
\newcommand{\D}{\mathcal{D}}
\newcommand{\F}{\mathcal{F}}
\newcommand{\I}{\mathcal{I}}
\renewcommand{\L}{\mathcal{L}}
\newcommand{\M}{\mathcal{M}}
\newcommand{\N}{\mathcal{N}}
\renewcommand{\O}{\mathcal{O}}
\newcommand{\R}{\mathcal{R}}
\newcommand{\T}{\mathcal{T}}
\newcommand{\h}[1]{\widehat{#1}}
\newcommand{\w}[1]{\wideparen{#1}}
\begin{document}
\title{Holonomic $\w{\D}$-modules on rigid analytic spaces}
\author{Andreas Bode}
\maketitle
\begin{abstract}
We adapt Caro's notion of overholonomicity to give a definition of holonomic $\w{\D}$-modules on rigid analytic spaces. We prove stability under five of the six operations (both inverse image functors, duality, and both direct image functors for projective morphisms), as well as base change results. Up to the open problem of stability under tensor products, we obtain an analogue of the usual six-functor formalism for holonomic $\D$-modules.
\end{abstract}
\tableofcontents
\section{Introduction}
In this paper, which can be regarded as a sequel to \cite{Bode6Op}, we introduce the notion of \emph{holonomic} $\w{\D}$-modules on smooth rigid analytic varieties and study behaviour under the six functors defined in loc. cit. 

Recall that if $X$ is a smooth complex algebraic variety, a coherent $\D_X$-module $\M$ is called holonomic if it is of minimal dimension, i.e. either $\M=0$ or $\mathrm{dim} \M=\mathrm{dim}X$ (which is the smallest possible dimension due to Bernstein's inequality). Here, the dimension of a coherent $\D$-module can be defined as the dimension of the associated characteristic variety, or equivalently, taking a homological viewpoint, in terms of the homological grade, i.e. the vanishing of Ext groups.

The most crucial properties of holonomic $\D$-modules include the stability of holonomicity under all six $\D$-module theoretic operations: the direct image $f_+$, the inverse image $f^+$, their shriek versions $f_!$ and $f^!$, tensor products over $\O$, and duality. Moreover, holonomic $\D$-module are `constructible' out of integrable connections ($\O$-coherent $\D$-modules), which allows one to formulate a Riemann--Hilbert correspondence for holonomic $\D$-modules with regular singularities.

In fact, (regular) holonomic $\D$-modules offer a key example of (a version of) a six-functor formalism, which allows us to treat them as `coefficient objects' for de Rham cohomology. Apart from some differences in convention, it is worth noting that the approach here is much more explicit (and as a consequence, more minimalist) than for abstract six-functor formalisms: one defines six functors explicitly on some larger class of $\D$-modules, and then exhibits holonomic modules as a smaller subcategory on which these functors are actually well-behaved. 

Let $K$ now be a complete nonarchimedean field of mixed characteristic $(0, p)$. We are concerned with an analogous theory in rigid analytic geometry, considering the completed sheaf $\w{\D}_X$ from \cite{DcapOne} on a smooth rigid analytic $K$-variety $X$. The role of coherent $\D$-modules is here played by coadmissible $\w{\D}$-modules. In \cite{Bode6Op}, we have introduced a derived categorical framework for complete bornological $\w{\D}$-modules, which allowed us to define natural analogues of the six functors in this setting. The analogue of $\mathrm{D}^b_{\mathrm{coh}}(\D)$ in this setting is the category of $\C$-complexes $\mathrm{D}_{\C}(\w{\D})$. We now turn to the question of holonomicity and stability under these operations.

Note that \cite{DcapThree} already developed a dimension theory for coadmissible $\w{\D}$-modules, but the corresponding class of modules of minimal dimension, which we call weakly holonomic $\w{\D}$-modules, contains many undesired pathologies. For instance, it is not true that the extraordinary inverse image of a weakly holonomic $\w{\D}$-module is again weakly holonomic (as \cite{DcapThree} discusses an example with infinite-dimensional fibres -- we will return to this example in section 6).

To introduce a better notion of holonomicity, we follow Caro's approach (e.g. \cite{Caro04}, \cite{Caro09}) in his work on overholonomic $F-\mathscr{D}^\dag$-modules, and essentially make the stability properties part of the definition. We say that a $\C$-complex is $0$-holonomic if it remains a $\C$-complex after pullback along smooth morphisms and taking local cohomology with support in any divisor. This is the direct analogue of what Caro calls `overcoherent' modules in \cite{Caro04} (also, `$0$-overholonomic' in the terminology of \cite{Caro09}). The definition of holonomicity (`overholonomicity' in \cite{Caro09}) is then given inductively to ensure in addition stability under the duality functor (see Definition \ref{defnhol} for the precise definition). 

In other words: A $\C$-complex is called holonomic if it remains a $\C$-complex under any finite succession of duality, local cohomology and inverse image operations for smooth morphisms.

In adopting Caro's definitions, we have decided to change the terminology and drop the `over', due to the absence of any overconvergence phenomena in our theory as well as the current absence of a workable definition of holonomicity. We remark that both in the complex algebraic setting and in Caro's setting, overholonomicity can be shown to be equivalent to the usual notion of holonomicity in terms of minimal dimension. In contrast, it is worth pointing out that in our setting, holonomicity is not equivalent to weak holonomicity, as we are ruling out the pathologies mentioned in \cite{DcapThree}, for instance.

We remark that we define what it means for an object in $\mathrm{D}_{\C}(\w{\D})$ to be holonomic, rather than for a coadmissible $\w{\D}$-module, i.e. we define a certain triangulated subcategory $\mathrm{D}_{\mathrm{hol}}(\w{\D})$ of $\mathrm{D}(\w{\D})$. Of course, a coadmissible $\w{\D}$-module is then called holonomic if it can be regarded as a holonomic complex, concentrated in degree zero. We expect that a bounded $\C$-complex $\M^\bullet\in \mathrm{D}_{\C}(\w{\D})$ is holonomic if and only if each of its cohomology groups is holonomic (compare Lemma \ref{holcohom}). We refer to section 6 for a discussion of this and several related open questions, in particular whether holonomic implies weakly holonomic.
 
The following result ensures that our definition is reasonable.
\begin{thm}[{Theorem \ref{ICishol}}]
Let $X$ be a smooth rigid analytic $K$-variety. Then any integrable connection on $X$ is a holonomic $\w{\D}_X$-module.
\end{thm}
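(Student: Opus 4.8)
The plan is to produce a class of $\C$-complexes which contains every integrable connection and is visibly stable under all three operations appearing in the definition of holonomicity---smooth inverse image, duality, and local cohomology along a divisor. Once such a class is in hand, every finite word in these operations applied to an integrable connection stays inside it and is in particular a $\C$-complex, which by the inductive definition of holonomicity is exactly what must be shown.

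Two of the three operations are harmless on integrable connections, so the entire difficulty can be isolated in the local cohomology functor. Indeed, if $f\colon Y\to X$ is smooth, then $f^!\M$ of an integrable connection $\M$ is again $\O_Y$-coherent with its induced connection (up to the conventional shift), hence an integrable connection; and the dual $\mathbb{D}\M$ is likewise an integrable connection, being the dual connection up to shift and twist by the canonical bundle. Thus the only way to leave the class of integrable connections is via $R\Gamma_Z$, and the new phenomena it creates are what the argument must control.

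The technical heart is the following Main Lemma: for an integrable connection $\M$ on $X$ and a divisor $Z\subset X$, the localisation $\M(*Z)$ is a coadmissible $\w{\D}_X$-module; consequently $R\Gamma_Z(\M)$, which sits in the triangle $R\Gamma_Z(\M)\to\M\to\M(*Z)\xrightarrow{+1}$, is a $\C$-complex. To prove coadmissibility of $\M(*Z)$ I would pass to an admissible affinoid cover trivialising $\M$, reducing to the case $\M=\O_X$, where the claim becomes that $\O_X(*Z)$ is coadmissible. This I would establish from the explicit presentation of $\w{\D}_X$ in \cite{DcapOne}, realising $\O_X(*Z)$ as an inverse limit of the Banach completions that govern coadmissibility. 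I expect this to be the main obstacle: it is the analogue of Berthelot's and Caro's results on $\mathscr{D}^{\dagger}(*Z)$, but here there is no Bernstein--Sato polynomial immediately available, so the pole orders must be bounded analytically and uniformly across the layers of the Fr\'echet structure.

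Granting the Main Lemma, I would close the argument with the class $\mathcal{G}$ of $\C$-complexes that are, locally on $X$, finite iterated extensions of objects $i_+\N(*D)$ and their $!$-analogues, where $i\colon W\hookrightarrow X$ is a closed immersion of a smooth subvariety, $\N$ is an integrable connection on $W$, and $D\subset W$ is a divisor. Integrable connections lie in $\mathcal{G}$ (take $W=X$, $D=\emptyset$), and $\mathcal{G}\subseteq\mathrm{D}_{\C}(\w{\D})$ by the Main Lemma together with Kashiwara's equivalence. Stability of $\mathcal{G}$ under $f^!$ follows by smooth base change, replacing $W$ by $f^{-1}(W)$; stability under $\mathbb{D}$ uses that duality commutes with the proper pushforward $i_+$ and sends the $*$-localisation $\N(*D)$ to the $!$-localisation of the dual connection $\mathbb{D}\N$, which is itself a $\C$-complex via the triangle relating it to $\N$ and a term supported on $D$ of lower dimension. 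For a further $R\Gamma_Z$ I would reduce via Kashiwara to computing $R\Gamma_{Z\cap W}\bigl(\N(*D)\bigr)$ on $W$: its localisation part is $\N\bigl(*(D\cup(Z\cap W))\bigr)$, again covered by the Main Lemma since this is a localisation of the integrable connection $\N$ along a larger divisor, while its supported part lives on $Z\cap W$ and is handled by induction on $\dim W$, the case $\dim W=0$ being trivial. This yields the stability of $\mathcal{G}$ under all three operations and hence the theorem.
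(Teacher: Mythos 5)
Your opening reduction is the same as the paper's: smooth inverse image and duality send integrable connections to (shifted) integrable connections, so the entire difficulty sits in $\mathrm{R}\underline{\Gamma}_Z$. Your ``Main Lemma'' is also the correct key input, but note that the paper does not reprove it: it is exactly the content of the cited result \cite[Theorem 10.6]{DcapThree}, invoked in Proposition \ref{ICzero} to get $0$-holonomicity. Since you yourself flag this as the main obstacle and only sketch how you would attack it, the base case of your argument is deferred rather than established; that is acceptable if you are allowed to cite the prior paper, but it should be stated as a citation, not as a lemma to be proved.

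The genuine gap is in the second half. Your class $\mathcal{G}$ consists of iterated extensions of objects $i_+\N(*D)$ with $W$ a \emph{smooth closed} subvariety and $\N$ an integrable connection \emph{on all of} $W$. To close the induction under $\mathrm{R}\underline{\Gamma}_Z$ you must show that the supported part of $\mathrm{R}\underline{\Gamma}_{Z\cap W}(\N(*D))$ again decomposes into such objects, i.e.\ that the modules appearing on lower-dimensional strata are localisations of integrable connections defined on entire smooth closed subvarieties. There is no reason for this: what naturally appears on a stratum is an integrable connection on an \emph{open} piece of it, pushed forward, and the paper's Lemma \ref{typelift} shows this is precisely where the theory breaks --- $\O_U x^{\lambda}$ is an integrable connection on $U=X\setminus\{0\}$ whose extension $j_+j^!$ is coadmissible but not $0$-holonomic, because it does not come from a connection on all of $X$. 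Establishing that holonomic objects admit such a constructible d\'evissage is exactly Question \ref{consques}, which the paper lists as open; your proof of the theorem cannot rest on it. (You also never address singular divisors $Z$, which the paper handles via Temkin's resolution of singularities in Lemma \ref{smoothtest}.) The paper's actual argument avoids identifying $\mathrm{R}\underline{\Gamma}_Z(f^!\M)$ altogether: it inducts on the level $n$ in the definition of holonomicity, reduces to smooth centres $Z$ by resolution of singularities and Mayer--Vietoris, writes $\mathrm{R}\underline{\Gamma}_Z(f^!\M)\cong i_+i^!f^!\M$, and then uses only the commutation $\mathbb{D}i_+\cong i_+\mathbb{D}$ (Proposition \ref{Poincare}) together with the fact that $i_+$ preserves $n$-holonomicity (Proposition \ref{nholproj}); the object $\mathbb{D}i^!f^!\M$ is again an integrable connection, so the induction closes without any structure theory for the local cohomology itself. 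You should reorganise your argument along these lines rather than through the class $\mathcal{G}$.
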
 
We prove the following stability properties.
\begin{thm}[{Theorem \ref{sixffhol}}]
Let $f: X\to Y$ be a morphism of smooth rigid analytic $K$-varieties.
\begin{enumerate}[(i)]
\item $f^!$ and $f^+$ send $\mathrm{D}_{\mathrm{hol}}(\w{\D}_Y)$ to $\mathrm{D}_{\mathrm{hol}}(\w{\D}_X)$.
\item If $f$ is projective, then $f_+\cong f_!$ sends $\mathrm{D}_{\mathrm{hol}}(\w{\D}_X)$ to $\mathrm{D}_{\mathrm{hol}}(\w{\D}_Y)$.
\item The duality functor $\mathbb{D}_X$ sends $\mathrm{D}_{\mathrm{hol}}(\w{\D}_X)$ to $\mathrm{D}_{\mathrm{hol}}(\w{\D}_X)^{\mathrm{op}}$.
\end{enumerate}
\end{thm}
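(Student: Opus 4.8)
The plan is to exploit the inductive shape of the definition. Writing $\mathrm{D}_{\mathrm{hol}}(\w{\D})$ as the class of $\C$-complexes that remain $\C$-complexes under every finite word in the three \emph{generating} operations---duality $\mathbb{D}$, local cohomology $R\Gamma_D$ along a divisor $D$, and extraordinary pullback $g^!$ along a smooth morphism $g$---reduces several assertions to a formal manipulation. For part (iii), since $\mathbb{D}$ is involutive, any word $w$ evaluated on $\mathbb{D}_X\M$ equals the word $w\circ\mathbb{D}_X$ evaluated on $\M$; as the latter is again a word in the generating operations, $\mathbb{D}_X\M$ lies in $\mathrm{D}_{\mathrm{hol}}$ whenever $\M$ does. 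The same prepending argument shows immediately that $\mathrm{D}_{\mathrm{hol}}$ is stable under $g^!$, hence under $g^+$ (which differs by a shift), for smooth $g$; this is the smooth case of part (i).

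The first genuine step is to upgrade stability under divisor local cohomology to stability under $R\Gamma_Z$ for a smooth closed subvariety $Z$. Here I would use the composition identity $R\Gamma_{Z_1}\circ R\Gamma_{Z_2}\cong R\Gamma_{Z_1\cap Z_2}$ together with a Mayer--Vietoris/excision argument to write $R\Gamma_Z$ locally as an iterate of the generating divisor operations, so that it preserves holonomicity. With this in hand I would treat a closed immersion $\iota\colon Z\hookrightarrow X$: via Kashiwara's equivalence one has $\iota_+\iota^!\M\cong R\Gamma_Z\M$, so $\iota^!$ (and dually $\iota^+$) preserves holonomicity once one checks that Kashiwara's equivalence matches holonomic objects on $Z$ with holonomic objects on $X$ supported on $Z$---this compatibility with the generating operations is the technical heart of the step. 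Factoring an arbitrary $f\colon X\to Y$ through its graph $X\hookrightarrow X\times Y\to Y$ as a closed immersion followed by a smooth projection, and using $f^!\cong\iota^!p^!$, then yields part (i) in general.

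For part (ii) I would factor the projective morphism $f$ as a closed immersion $X\hookrightarrow\mathbb{P}^n\times Y$ followed by the projection $p\colon\mathbb{P}^n\times Y\to Y$. The closed-immersion factor is handled as above, so the crux is the projection, and the strategy is again to push a given word $w$ past $p_+$: relative duality for the proper morphism $p$ gives $\mathbb{D}_Y p_+\cong p_+\mathbb{D}_{\mathbb{P}^n\times Y}$ (which is also the source of $f_+\cong f_!$), smooth base change gives $g^!p_+\cong p'_+{g'}^!$ along a Cartesian square with $p'$ projective and $g'$ smooth, and---crucially---$p^{-1}(D)=\mathbb{P}^n\times D$ is again a divisor, so $R\Gamma_D p_+\cong p_+R\Gamma_{p^{-1}(D)}$ keeps us within the generating operations. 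Iterating these commutations, any word applied to $p_+\M$ is rewritten as a single projective direct image $p''_+$ applied to a word evaluated on a smooth pullback $\M'$ of $\M$; since $\M'$ is holonomic and the generating operations preserve holonomicity, that inner expression is a $\C$-complex, and projective direct image preserves $\C$-complexes by \cite{Bode6Op}, so $p_+\M$ is holonomic.

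I expect the main obstacle to be the base-change and commutation package for the projection step---establishing $g^!p_+\cong p'_+{g'}^!$ and $R\Gamma_D p_+\cong p_+R\Gamma_{p^{-1}(D)}$ at the level of the bornological derived category of $\w{\D}$-modules, with enough naturality to iterate them coherently through an arbitrary word. A secondary difficulty is the compatibility of Kashiwara's equivalence with holonomicity, i.e. that an object on $X$ supported on a smooth closed $Z$ is holonomic exactly when its Kashiwara restriction to $Z$ is. Once these two compatibilities are in place, the word formalism makes the remainder of the argument purely formal.
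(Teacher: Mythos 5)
Your proposal follows essentially the same route as the paper: the inductive definition of $n$-holonomicity is exactly your word formalism unrolled, duality and smooth pullback are handled by the same prepending/composition argument, closed immersions by Kashiwara's equivalence plus a tubular-neighbourhood reduction, and projective direct images by commuting the generating operations past $p_+$ via projective base change, relative duality $\mathbb{D}f_+\cong f_+\mathbb{D}$, and Mayer--Vietoris. The two compatibilities you single out as the main obstacles are precisely the inputs the paper supplies (Theorem \ref{basechange} for $0$-holonomic complexes together with Proposition \ref{Poincare}, and the argument of Proposition \ref{pullbackalongclosed} and its $n$-holonomic analogue), so your outline is correct and matches the paper's proof.
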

For direct images, some properness or projectiveness condition is necessary -- see the end of section 5 for a detailed discussion and counterexamples even in the case of partially proper morphisms. We express the hope that stability under tensor products can be proved following again Caro--Tsuzuki's approach \cite{CaroTsuzuki}, by first showing that holonomic $\w{\D}$-modules are in some sense `constructible' out of integrable connections. 

We are pleased to say that once the bornological theory has been set up properly, all arguments regarding holonomic modules turn out to be purely formal: while section 3 still involves a bit of non-archimedean analysis, the proofs in the following sections essentially only rely on a handful of general properties of $\mathrm{D}_\C(\w{\D})$ and standard diagram chases.

We briefly mention how this work relates to alternative formulations. Instead of working with sheaves, both \cite{dR} and \cite{Soor} suggest another approach by essentially associating to any (smooth) affinoid $U$ a suitable $\infty$-category $\mathrm{D}(\w{\D}(U))$, satisfying descent -- for example, one could take $\mathrm{D}(\mathrm{Mod}_{LH(\h{\B}c_K)}(\w{\D}_X(U)))$, or in the case of \cite{dR}, a suitable category of modules on the analytic de Rham stack. As $\C$-complexes are obtained by glueing objects in $\mathrm{D}^b_{\mathrm{coh}}(\D_n)$, it is not surprising that our category $\mathrm{D}_{\C}(\w{\D}_U)$ (or rather, its $\infty$-categorical enhancement) naturally embeds into such categories, see \cite[Corollary 1.5, Theorem 1.6]{Soor}. Given the formal nature of much of our argument in this paper, one may expect that an analogous definition yields an equivalent notion of holonomicity in these different settings -- it essentially remains to check that our operations `do the same thing' on $\C$-complexes as the functors in other frameworks, i.e. if $\theta: \mathrm{D}_{\C}(\w{\D}_U)\to \mathrm{D}(\w{\D}(U))$ is the embedding, and e.g. $f: V\to U$ is a morphism of smooth affinoids, then $f^!\M^\bullet$ is a $\C$-complex if and only if $f^!\theta(\M^\bullet)$ is a $\C$-complex, and in this case $f^!\theta(\M^\bullet)\cong \theta(f^!\M^\bullet)$. Since our definition of holonomicity only involves duality, inverse image along a smooth morphism and local cohomology (equivalently, $j_+j^!$ for some divisor complement $j: U\setminus D\to U$), this seems very plausible.

We give a brief overview over the structure of this paper.

In section 2, we recall the main results from \cite{Bode6Op}.

In section 3, we give further results on $\C$-complexes, involving localisation triangles for closed subvarieties (excision), exterior tensor products, and the commutativity of $\mathbb{D}$ with smooth pullback.

In sections 4 and 5, we introduce $0$-holonomic and holonomic $\w{\D}$-modules, respectively, and prove their stability properties. These sections are purely formal: We prove a projective base change theorem for $0$-holonomic complexes, which reduces most of our arguments to straightforward diagram chases.

We then discuss further examples and open questions in section 6.

\subsection*{Conventions and notation}
Throughout, $K$ denotes a complete nonarchimedean field of mixed characteristic $(0, p)$. We let $R=\{x\in K| \ |x|\leq 1\}$ denote its ring of integers, $\mathfrak{m}\subset R$ the maximal ideal. We pick $\pi\in \mathfrak{m}$ with $\pi\neq 0$ as a pseudo-uniformizer.

All our rigid analytic $K$-varieties are assumed to be quasi-separated and quasi-paracompact. When using expressions like $\mathrm{dim}X$, we tacitly assume that $X$ is pure of dimension $\mathrm{dim}X$.

\section{$\mathcal{C}$-complexes}
We begin by recalling the main results from \cite{Bode6Op} -- we refer to loc. cit. for a more detailed discussion.
\begin{defn}
A \textbf{bornological $K$-vector space} is a $K$-vector space $V$ equipped with a collection $\mathcal{B}$ of `bounded' subsets of $V$ such that
\begin{enumerate}[(i)]
\item if $B\in \mathcal{B}$ and $B'\subseteq B$, then $B'\in \mathcal{B}$.
\item if $v\in V$, then $\{v\}\in \mathcal{B}$.
\item $\mathcal{B}$ is closed under taking finite unions.
\item if $B\in \mathcal{B}$ and $\lambda\in K$ is non-zero, then $\lambda \cdot B\in \mathcal{B}$.
\item if $B\in \mathcal{B}$, then $R\cdot B$, the $R$-submodule spanned by $B$, is an element of $\mathcal{B}$.
\end{enumerate} 
\end{defn}
The category $\B c_K$ has as objects bornological $K$-vector spaces, and morphisms are precisely those $K$-linear maps which are bounded in the sense that they send bounded subsets to bounded subsets.

We say that $V$ is \textbf{complete} if its bornology is generated by complete $R$-submodules, i.e. any bounded subset of $V$ is contained in a bounded $R$-submodule which is $\pi$-adically complete. In particular, any complete bornological vector space can be written as a colimit of Banach spaces (both in the category $\B c_K$ of bornological vector spaces and in the category $\h{\B}c_K$ of complete bornological vector spaces).

We remark that Fr\'echet $K$-spaces can be viewed as complete bornological vector spaces in a natural way, by declaring those subsets to be bounded which are bounded with respect to each semi-norm in a family of semi-norms defining the Fr\'echet topology.

The category $\h{\B}c_K$ is a quasi-abelian category in the sense of Schneiders \cite{Schneiders}, admitting a natural theory of a (unbounded) derived category. As a quasi-abelian category, $\h{\B}c_K$ admits a canonical embedding $I$ into its left heart (its `abelian envelope') $LH(\h{\B}c_K)$, an abelian category such that $I$ induces an equivalence
\begin{equation*}
\mathrm{D}(\h{\B}c_K)\cong \mathrm{D}(LH(\h{\B}c_K))
\end{equation*} 
of triangulated categories.

Moreover, $\h{\B}c_K$ can be equipped with a closed symmetric monoidal structure using the projective completed tensor product (extending the usual completed tensor product for Banach spaces).

It was noted in \cite[subsections 4.2, 6.1]{Bode6Op} that it is not straightforward to develop a theory of complete bornological sheaves, as $\h{\B}c_K$ is not an \emph{elementary} quasi-abelian category. However, the quasi-abelian category $\mathrm{Ind}(\mathrm{Ban}_K)$ of Ind-Banach spaces is elementary, and there is a natural functor
\begin{equation*}
	\mathrm{diss}: \h{\B}c_K\to \mathrm{Ind}(\mathrm{Ban}_K)
\end{equation*}
inducing an equivalence of closed symmetric monoidal categories between the respective left hearts. The tensor product on $LH(\h{\B}c_K)$, which can be obtained by deriving the completed tensor product on $\h{\B}c_K$ or equivalently by extending the completed tensor product on Banach spaces to $\mathrm{Ind}(\mathrm{Ban}_K)$, will be denoted by $\widetilde{\otimes}_K$.

It is now possible to develop a derived theory of sheaves on a site $X$ with the category
\begin{equation*}
	\mathrm{D}(\mathrm{Shv}(X, LH(\h{\B}c_K)))\cong \mathrm{D}(\mathrm{Shv}(X, LH(\mathrm{Ind}(\mathrm{Ban}_K))))\cong \mathrm{D}(\mathrm{Shv}(X, \mathrm{Ind}(\mathrm{Ban}_K))).
\end{equation*} 

We can now consider categories of modules over monoid objects. For instance, if $X$ is a rigid analytic $K$-variety, then the natural Banach structure of $\O_X(U)$ for any affinoid subdomain $U$ turns $\O_X$ into monoid in $\mathrm{Shv}(X, LH(\h{\B}c_K))$, and we can study the category of $\O_X$-module objects in $\mathrm{Shv}(X, LH(\h{\B}c_K))$, which contains as a full subcategory all coherent $\O_X$-modules (in turn equipped with their natural Banach structures).

Regarding $\h{\B}c_K$ as a full subcategory of $LH(\h{\B}c_K)$ and of $\mathrm{Ind}(\mathrm{Ban}_K)$, we will use the following shorthand: a monoid $\R \in \mathrm{Shv}(X, LH(\h{\B}c_K))$ is called a sheaf of complete bornological $K$-algebras if $\R(U)\in \h{\B}c_K$ for each $U$. In the same way, we can talk of sheaves of complete bornological $\R$-modules. 

Now let $X$ be a smooth rigid analytic $K$-variety. We can define a sheaf $\w{\D}_X$ of complete bornological vector spaces, which locally is given as follows: suppose that $X$ is affinoid and that $X=\Sp A$ admits a local coordinate system, i.e. there exist $x_1, \dots, x_m\in A$ and $\partial_1, \dots, \partial_m\in \T_X(X)$, a free generating set of $\T_X(X)$ over $A$, satisfying
\begin{equation*}
[\partial_i, \partial_j]=0, \ \partial_i(x_j)=\delta_{ij}
\end{equation*}
for all $i, j$.

Evidently, such a coordinate system corresponds to an \'etale morphism $X\to \Sp K\langle X_1, \hdots, X_m\rangle$, with $x_i\in A$ the image of $X_i$, and $\partial_i$ the corresponding partial derivative.

Let $\A\subseteq A$ be an $R$-subalgebra of topologically finite type which spans $A$, i.e. an admissible affine formal model of $A$. After rescaling the $x_i$ and $\partial_i$, we can suppose that $\partial_i(\A)\subseteq \A$. Let $\L\subseteq \T_X(X)$ denote the $\A$-submodule generated by the $\partial_i$. By assumption, $\L$ is an $(R, \A)$-Lie lattice in the Lie--Rinehart algebra $\T_X(X)$, so that we can form the completed enveloping algebra 
\begin{equation*}
\h{U_\A(\L)}\otimes_R K,
\end{equation*}
and similarly $\h{U_\A(\pi^n\L)}\otimes_R K$ for any $n\geq 0$ -- see \cite{Rinehart} and \cite{DcapOne} for a general discussion of enveloping algebras of Lie--Rinehart algebras.

Then we have explicitly
\begin{align*}
\w{\D}_X(X)&=\left\{ \sum_{\alpha\in \mathbb{N}^m} f_\alpha \partial^\alpha: \ \pi^{-n|\alpha|}f_\alpha\to 0 \ \mathrm{as} \ |\alpha|\to \infty \  \forall n\right\}\\
&=\varprojlim_{n\geq 0} \left(\h{U_\A(\pi^n\L)}\otimes_R K\right).
\end{align*}
This is a Fr\'echet $K$-algebra (in fact, a Fr\'echet--Stein algebra in the sense of \cite{ST}), so we can regard it as a complete bornological $K$-algebra. 

If $X$ is any smooth rigid analytic $K$-variety, assigning each admissible open affinoid subspace $U$ (with free tangent sheaf, say) the corresponding algebra $\w{\D}_U(U)$ defines a monoid in $\mathrm{Shv}(X, LH(\h{\B}c_K))$:

\begin{lem}[{\cite[Theorem 6.4, Theorem 3.24]{Bode6Op}}]
Let $X$ be a smooth rigid analytic $K$-variety. Then $\w{\D}_X$ is a sheaf of complete bornological $K$-algebras. The category $\mathrm{Mod}_{\mathrm{Shv}(X, LH(\h{\B}c_K))}(\w{\D}_X)$ is a Grothendieck abelian category admitting flat resolutions.
\end{lem}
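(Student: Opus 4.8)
The plan is to prove the two assertions separately, in each case reducing to properties of the elementary quasi-abelian category $\mathrm{Ind}(\mathrm{Ban}_K)$ recorded above. For the first assertion, I would begin by checking that $U\mapsto \w{\D}_U(U)$ is well defined on affinoids admitting a free local coordinate system, independently of the auxiliary choices: any two admissible formal models $\A$ and any two $(R,\A)$-Lie lattices $\L$ are commensurable, so the Fr\'echet--Stein completions $\varprojlim_n \h{U_\A(\pi^n\L)}\otimes_R K$ agree. Since each such value is Fr\'echet--Stein, it is in particular a complete bornological $K$-algebra, so it remains only to verify the sheaf axioms in $\mathrm{Shv}(X, LH(\h{\B}c_K))$.

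For the sheaf property I would work level by level. Each $\h{U_\A(\pi^n\L)}\otimes_R K$ is a sheaf of Banach algebras whose \v{C}ech complex for an admissible affinoid covering is strictly exact in $\mathrm{Ind}(\mathrm{Ban}_K)$, by an analogue of Tate's acyclicity theorem in the Ind-Banach setting, using that these enveloping algebras are locally free $\O_X$-modules and that affinoid restriction is flat. I would then pass to the inverse limit over $n$; because the transition maps in the Fr\'echet--Stein presentation have dense image, the relevant limit is exact on these \v{C}ech complexes and preserves strict exactness, yielding the sheaf property for $\w{\D}_X$. This establishes that $\w{\D}_X$ is a sheaf of complete bornological $K$-algebras.

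For the second assertion I would argue formally. Since $\mathrm{Ind}(\mathrm{Ban}_K)$ is elementary quasi-abelian, its left heart $LH(\h{\B}c_K)$ is Grothendieck abelian; sheaves on the site $X$ with values in a Grothendieck abelian category again form one, as exactness of filtered colimits and the existence of a generator are inherited through sheafification. Module objects over the monoid $\w{\D}_X$ then form a Grothendieck abelian category: kernels, cokernels and filtered colimits are computed in the underlying sheaf category, while a generator is obtained by applying the free functor $\w{\D}_X\widetilde{\otimes}_K(-)$ to a generator of the base. Finally, the free modules $\w{\D}_X\widetilde{\otimes}_K\G$ with $\G$ flat in $\mathrm{Shv}(X, LH(\h{\B}c_K))$ are flat $\w{\D}_X$-modules and form a generating family, so every module is a quotient of a coproduct of such objects and hence admits a flat left resolution. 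The main obstacle is the genuinely analytic input of the previous paragraph: the strict exactness of the \v{C}ech complexes in $\mathrm{Ind}(\mathrm{Ban}_K)$ together with its compatibility with the inverse limit over $n$. Once these are in place, everything else is formal category theory over a Grothendieck abelian symmetric monoidal base with enough flats.
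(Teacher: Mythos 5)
This lemma is not proved in the paper at all: it is recalled verbatim from \cite[Theorem 6.4, Theorem 3.24]{Bode6Op}, so there is no in-paper proof to compare against. Judged on its own terms, your sketch is a reasonable reconstruction of how the cited reference argues, with the right division of labour: the genuinely analytic content is the level-$n$ acyclicity (a Tate-type theorem for the Banach algebras $\h{U_\A(\pi^n\L)}\otimes_RK$) together with the passage to the inverse limit via dense transition maps, and everything in the second half is formal category theory over an elementary quasi-abelian base. Two points in your outline are thinner than they should be. First, the \v{C}ech acyclicity at level $n$ only holds on the restricted site $X_n$ (an affinoid subdomain, and a finite covering of it, only lies in $X_n$ for $n$ sufficiently large), so the limit over $n$ is not a limit of \v{C}ech complexes over a fixed site; one must use that every affinoid subdomain and every finite covering is eventually admissible, and then invoke quasi-compactness and quasi-separatedness to reduce general admissible coverings to finite affinoid ones. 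Second, for the flat-resolution claim you need not merely that $\mathrm{Shv}(X, LH(\h{\B}c_K))$ has enough flats but that it has a \emph{generating family of flat objects} (so that every $\w{\D}_X$-module is a quotient of a coproduct of flat free modules $\w{\D}_X\widetilde{\otimes}_K\G$); this holds because the standard generators, built from extensions by zero of representables tensored with Banach-space generators of $\mathrm{Ind}(\mathrm{Ban}_K)$, are flat for the exact completed tensor product, but it is an extra verification, not automatic. With those two points made explicit, the argument goes through along the lines of the cited source.
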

We generally write $\mathrm{D}(\w{\D}_X)=\mathrm{D}(\mathrm{Mod}_{\mathrm{Shv}(X, LH(\h{\B}c_K))}(\w{\D}_X))$ for the derived category, similarly for other monoids. We will work throughout in this setting, so the term `$\w{\D}_X$-module' will always refer to an object of $\mathrm{Mod}_{\mathrm{Shv}(X, LH(\h{\B}c_K))}(\w{\D}_X)$.
\begin{lem}[{\cite[Theorem 6.11]{Bode6Op}}]
	\label{sidechanging}
Let $\Omega_X=\Omega_X^{\wedge \mathrm{dim}X}$ be the sheaf of top differentials on $X$. Then $\Omega_X\widetilde{\otimes}_{\O_X}-$ yields an equivalence of categories between left $\w{\D}_X$-modules and right $\w{\D}_X$-modules.
\end{lem}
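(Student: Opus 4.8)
The plan is to produce an explicit right $\w{\D}_X$-module structure on the invertible sheaf $\Omega_X$ and then to show that tensoring with it interchanges left and right modules, the quasi-inverse being given by tensoring with the dual line bundle $\Omega_X^\vee$. Since all the structures involved are local and intrinsic, I would first reduce to the affinoid case $X=\Sp A$ equipped with a local coordinate system $x_1,\dots,x_m$, $\partial_1,\dots,\partial_m$ as above, and only at the end check that the local constructions glue.

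On such an affinoid, $\Omega_X$ is free of rank one over $\O_X$ with generator $dx=dx_1\wedge\cdots\wedge dx_m$. I would define the right action of a vector field $\theta\in\T_X$ by the negative Lie derivative, $\omega\cdot\theta=-\mathcal{L}_\theta\omega$; in coordinates this reads $(f\,dx)\cdot\partial_i=-\partial_i(f)\,dx$. One then checks that this extends to a right action of the full algebra $\w{\D}_X(X)$ satisfying the module axioms: the relation $[\partial_i,\partial_j]=0$ guarantees compatibility with the commutators, while the Leibniz rule encodes the passage to the formal transpose of a differential operator. The essential analytic point is \textbf{boundedness}: the formal transpose sends each Lie lattice term $\h{U_\A(\pi^n\L)}\otimes_R K$ into itself (after the rescaling ensuring $\partial_i(\A)\subseteq\A$), so the action map is a morphism of the Fr\'echet--Stein algebras and hence bounded, that is, a genuine morphism in $\mathrm{Shv}(X, LH(\h{\B}c_K))$ rather than merely a $K$-linear one.

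Granting the right structure on $\Omega_X$, for a left module $\M$ I would equip $\Omega_X\widetilde{\otimes}_{\O_X}\M$ with the right action determined on vector fields by
\begin{equation*}
(\omega\otimes m)\cdot\theta=(\omega\cdot\theta)\otimes m-\omega\otimes(\theta\cdot m),
\end{equation*}
checking that this is well defined over $\O_X$ and again bounded. Because $\Omega_X$ is an invertible $\O_X$-module, the underlying functor $\Omega_X\widetilde{\otimes}_{\O_X}-$ is already an auto-equivalence of $\O_X$-modules, with quasi-inverse $\Omega_X^\vee\widetilde{\otimes}_{\O_X}-$; the dual line bundle $\Omega_X^\vee$ carries the corresponding left $\w{\D}_X$-structure by the same recipe, and the canonical $\O_X$-linear isomorphisms $\Omega_X^\vee\widetilde{\otimes}_{\O_X}\Omega_X\cong\O_X\cong\Omega_X\widetilde{\otimes}_{\O_X}\Omega_X^\vee$ upgrade to isomorphisms of $\w{\D}_X$-modules, exhibiting the two functors as mutually quasi-inverse.

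The local formulae for $\mathcal{L}_\theta$ are coordinate-independent, so the constructions patch to a global morphism of sheaves, and it only remains to observe that tensoring with the coherent rank-one module $\Omega_X$ is exact, so that passing to the left heart causes no difficulty. I expect the main obstacle to be precisely the boundedness verifications of the previous paragraphs: one must confirm throughout that tensoring with $\Omega_X$ and the Lie-derivative action are morphisms in the completed bornological category $\mathrm{Shv}(X, LH(\h{\B}c_K))$, which is what distinguishes this statement from its purely algebraic prototype. Once this is secured, all module axioms and the inverse relations are formal consequences of the Lie--Rinehart (Hopf-algebroid-type) structure of $\w{\D}_X$ over $\O_X$.
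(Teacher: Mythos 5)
The paper does not prove this statement itself: it is quoted directly from \cite[Theorem 6.11]{Bode6Op}, so there is no internal proof to compare against. Your proposal is the standard side-changing construction (right $\w{\D}_X$-structure on $\Omega_X$ via the negative Lie derivative, transfer of structure on $\Omega_X\widetilde{\otimes}_{\O_X}\M$, quasi-inverse given by the dual line bundle), and you correctly isolate the only genuinely non-algebraic point, namely that the transpose action preserves the lattice levels $\h{U_\A(\pi^n\L)}\otimes_R K$ and is therefore bounded; this is essentially the argument of the cited reference, so your proof is correct and takes the expected route.
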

We can now define six operations in analogy with the six functors for $\D$-modules on smooth algebraic varieties:

There is a tensor product bifunctor
\begin{align*}
-\widetilde{\otimes}^\mathbb{L}_{\O_X}-:& \mathrm{D}(\w{\D}_X)\times \mathrm{D}(\w{\D}_X)\to \mathrm{D}(\w{\D}_X)\\
&(\M^\bullet, \N^\bullet)\mapsto \M^\bullet\widetilde{\otimes}^\mathbb{L}_{\O_X} \N^\bullet
\end{align*}
and a duality functor
\begin{align*}
\mathbb{D}:& \mathrm{D}(\w{\D}_X)\to \mathrm{D}(\w{\D}_X)^{\mathrm{op}}\\
&\M^\bullet\mapsto \mathrm{R}\mathcal{H}om_{\w{\D}_X}(\M^\bullet, \w{\D}_X)\widetilde{\otimes}^{\mathbb{L}}_{\O_X}\Omega_X^{\otimes -1} [\mathrm{dim} X].
\end{align*}
Given a morphism $f: X\to Y$ between smooth rigid analytic $K$-varieties, we define the transfer bimodule
\begin{equation*}
\w{\D}_{X\to Y}=f^*\w{\D}_Y=\O_X\widetilde{\otimes}_{f^{-1}\O_Y}f^{-1} \w{\D}_Y,
\end{equation*}
a $(\w{\D}_X, f^{-1}\w{\D}_Y)$-bimodule, and its side-changed version
\begin{equation*}
\w{\D}_{Y\leftarrow X}=\Omega_X\widetilde{\otimes}_{\O_X}\w{\D}_{X\to Y}\widetilde{\otimes}_{f^{-1}\O_Y} f^{-1}\Omega_Y^{\otimes-1},
\end{equation*}
an $(f^{-1}\w{\D}_Y, \w{\D}_X)$-bimodule.

We then set the extraordinary inverse image functor
\begin{align*}
f^!: &\mathrm{D}(\w{\D}_Y)\to \mathrm{D}(\w{\D}_X)\\
&\M^\bullet\mapsto \w{\D}_{X\to Y}\widetilde{\otimes}^\mathbb{L}_{f^{-1}\w{\D}_Y} f^{-1}\M^\bullet [\mathrm{dim}X-\mathrm{dim}Y],
\end{align*}
and the inverse image functor $f^+=\mathbb{D}_Xf^!\mathbb{D}_Y$.

The direct image functor is given by
\begin{align*}
f_+: & \mathrm{D}(\w{\D}_X)\to \mathrm{D}(\w{\D}_Y)\\
& \M^\bullet \mapsto \mathrm{R}f_*(\w{\D}_{Y\leftarrow X}\widetilde{\otimes}^\mathbb{L}_{\w{\D}_X} \M^\bullet),
\end{align*}
and the extraordinary direct image functor is $f_!=\mathbb{D}_Yf_+\mathbb{D}_Y$. We refer to \cite[section 7]{Bode6Op} for further results, e.g. concerning the composition of these functors.

We generally follow the $\D$-module theoretic conventions for our six operations, which differ somewhat from what one might expect from an abstract six-functor formalism. For example, the degree shift in $f^!$ is motivated by the Riemann--Hilbert correspondence, which prevents $f^!$ from being monoidal. This introduces various degree shifts in our formulas, which might get corrected e.g. by incorporating the shift into the side-changing functor from Lemma \ref{sidechanging} instead -- see e.g. \cite{Scholze6FF} for a discussion of $\D$-module operations from the viewpoint of abstract six-functor formalisms. We have opted to keep our framework as closely to the classical theory of $\D$-modules as possible. 

We now turn to the analogue of $\mathrm{D}^b_\mathrm{coh}(\D)$ in this theory. Ardakov--Wadsley introduced in \cite{DcapOne} the category $\C_X$ of coadmissible $\w{\D}_X$-modules as the analogue of coherent modules. Our notion of $\C$-complexes can be regarded as a straightforward generalization to the derived setting.
Suppose again that $X=\Sp A$ is affinoid (for simplicity, we can even assume it has a local coordinate system). As mentioned above, $\w{\D}_X(X)$ is a Fr\'echet--Stein algebra, so we can consider coadmissible $\w{\D}_X(X)$-modules as in \cite{ST}. We say that an abstract sheaf of $\w{\D}_X$-modules is coadmissible if it is obtained from a coadmissible $\w{\D}_X(X)$-module via completed localisation -- see \cite[section 8]{DcapOne} and \cite[subsection 6.2]{Bode6Op} for details.

As in the above, assume we have chosen an affine formal model $\A\subseteq A$ and an $(R, \A)$-Lie lattice $\L\subseteq \T_X(X)$. Above, we have written $\w{\D}_X(X)$ as an inverse limit of Banach algebras. We can carry out a similar description on the sheaf-theoretic level, as long as we are ready to vary the site: there exist subsites $X_n$ for each $n\geq 0$, satisfying the following properties:
\begin{enumerate}[(i)]
\item If an affinoid subdomain $Y$ is in $X_n$, then it is in $X_m$ for $m\geq n$. Any covering in $X_n$ also defines a covering in $X_m$ for $m\geq n$.
\item Let $Y$ be an affinoid subdomain of $X$. Then $Y$ is in $X_n$ for sufficiently large $n$, and any finite affinoid covering of $Y$ is a covering in $X_n$ for sufficiently large $n$.
\item If $Y=\Sp B$ is an affinoid subdomain in $X_n$, then $B$ contains an admissible affine formal model $\mathcal{B}$, containing the image $\mathcal{A}$ under restriction, which is stable under the $\pi^n\mathcal{L}$-action. In particular, $\B\otimes_\A \pi^n\L$ defnes an $(R, \B)$-Lie lattice inside $\T_X(Y)$.
\item The assignment
\begin{equation*}
Y\mapsto \mathcal{D}_n(Y):=\h{U_{\mathcal{B}}(\mathcal{B}\otimes_\mathcal{A} \pi^n\mathcal{L})}\otimes_RK,
\end{equation*}
which is independent of the choice of $\B$, defines a sheaf of $K$-algebras on $X_n$ which has vanishing higher Cech cohomology with respect to any $X_n$-covering.
\end{enumerate}
The site $X_n$ and the sheaf $\D_n$ obviously depend on our choice of $\L$ and in general can only be defined on affinoid spaces. 

If $\M$ is a coherent $\D_n$-module, then $\M(U)$ carries a natural Banach structure for any affinoid subdomain $U$ in $X_n$. This allows us to regard coherent $\D_n$-modules as complete bornological $\D_n$-modules. The following result gives a similar description of coadmissible $\w{\D}_X$-modules from the complete bornological viewpoint.
\begin{lem}[{\cite[Theorem 6.4, Lemma 6.6]{Bode6Op}}]
There is an exact, fully faithful functor from coadmissible $\w{\D}_X$-modules to $\mathrm{Mod}_{\mathrm{Shv}(X, LH(\h{\B}c_K))}(\w{\D}_X)$. If $X$ is an affinoid with free tangent sheaf, then a complete bornological $\w{\D}_X$-module $\M$ is coadmissible if and only if $\M_n:=\D_n\widetilde{\otimes}_{\w{\D}_X}\M$ is a coherent $\D_n$-module for each $n$ and the natural morphism $\M\to \varprojlim \M_n$ is an isomorphism.
\end{lem}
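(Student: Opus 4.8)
The statement is local-to-global in nature, and since the functor of the first assertion is built by completed localisation from coadmissible $\w{\D}_X(X)$-modules, I would first settle the affinoid case with free tangent sheaf (which is also the setting of the second assertion) and then globalise by glueing. Recall from \cite{ST} and \cite{DcapOne} that $\w{\D}_X(X)=\varprojlim_n \D_n(X)$ is Fr\'echet--Stein: each $\D_n(X)$ is a Noetherian Banach algebra, the transition maps $\D_m(X)\to \D_n(X)$ for $m\geq n$ are flat, and a coadmissible $\w{\D}_X(X)$-module $M$ is one presented as $M=\varprojlim_n M_n$ with $M_n:=\D_n(X)\otimes_{\w{\D}_X(X)}M$ finitely generated over $\D_n(X)$ and satisfying the compatibility $\D_n(X)\otimes_{\D_m(X)} M_m\cong M_n$ for $m\geq n$.

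For the first assertion I would proceed in three steps. First, the functor is constructed levelwise: each $M_n$ inherits a canonical Banach structure as a finitely generated $\D_n(X)$-module, so completed localisation over the site $X_n$ yields a sheaf $\M_n$ of coherent $\D_n$-modules with vanishing higher \v{C}ech cohomology (property (iv) of $X_n$ and $\D_n$ recalled above), and $\M:=\varprojlim_n \M_n$ is a Fr\'echet, hence complete bornological, object of $\mathrm{Mod}_{\mathrm{Shv}(X, LH(\h{\B}c_K))}(\w{\D}_X)$. Second, for exactness, a short exact sequence of coadmissible modules produces, by flatness of the Fr\'echet--Stein transition maps, short exact sequences at each level $n$; since each $\M_n$ is acyclic and the relevant inverse systems are Mittag-Leffler (this is precisely coadmissibility), the inverse limit taken in $LH(\h{\B}c_K)$ is exact, and one checks it agrees with the algebraic one. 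Third, for full faithfulness, taking global sections recovers $M$ from $\M$, while conversely any morphism of the associated bornological sheaves restricts on global sections to a $\w{\D}_X(X)$-linear map which is automatically bounded by automatic continuity for coadmissible modules \cite{ST}; the two assignments are then mutually inverse.

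The second assertion is a characterisation. The ``only if'' direction is immediate from the construction above, since $\D_n\widetilde{\otimes}_{\w{\D}_X}\M$ recovers $\M_n$ by flat base change along the Fr\'echet--Stein presentation. For ``if'', given $\M$ with each $\M_n:=\D_n\widetilde{\otimes}_{\w{\D}_X}\M$ coherent and $\M\xrightarrow{\sim}\varprojlim_n \M_n$, I would set $M_n:=\M_n(X)$ and $M:=\M(X)=\varprojlim_n M_n$, verify the coadmissibility compatibility $\D_n(X)\otimes_{\D_m(X)} M_m\cong M_n$ again from flatness of the transition maps, and finally show that completed localisation of the coadmissible module $M$ reproduces $\M$, using that both $\M$ and the localised sheaf are determined by their values on an $X_n$-cofinal family of affinoids.

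The main obstacle throughout is the comparison between the sheaf-theoretic bornological operations and their naive algebraic counterparts. Concretely, one must show that the completed tensor product $\D_n\widetilde{\otimes}_{\w{\D}_X}-$ carries no higher derived terms on the coherent pieces in play, so that $\M_n$ really is the levelwise base change and is concentrated in degree zero, and that $\varprojlim$ computed in the left heart $LH(\h{\B}c_K)$ coincides with the algebraic inverse limit. This is where the functional-analytic input -- flatness of the Fr\'echet--Stein transition maps, exactness of the completed tensor product for coherent Banach modules, and acyclicity of the sites $X_n$ -- does the real work; it is the one part of the argument that does not reduce to a formal diagram chase.
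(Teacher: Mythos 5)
This lemma is not proved in the paper: it is stated verbatim as a recollection from \cite[Theorem 6.4, Lemma 6.6]{Bode6Op} in the background section, so there is no in-paper argument to compare yours against. Judged on its own terms, your outline follows the route one would expect that reference to take: use the Fr\'echet--Stein presentation $\w{\D}_X(X)=\varprojlim\D_n(X)$, build the sheaf levelwise by completed localisation of the coherent pieces $M_n$, get exactness from flatness of the transition maps together with a Mittag--Leffler/topological exactness statement for countable inverse limits in $LH(\h{\B}c_K)$, and recover full faithfulness from global sections plus automatic continuity. You also correctly isolate the two genuinely non-formal inputs: that $\D_n\widetilde{\otimes}_{\w{\D}_X}-$ agrees with the naive algebraic base change on the modules in play, and that the left-heart inverse limit agrees with the algebraic one. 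One point your sketch leaves implicit and that deserves to be named: the ``if'' direction of the characterisation requires a Kiehl-type theorem for the sites $X_n$, namely that a coherent $\D_n$-module is generated by, and reconstructed from, its global sections via $\M_n(U)\cong \D_n(U)\widetilde{\otimes}_{\D_n(X)}\M_n(X)$ for $U$ in $X_n$; without this you cannot pass from the abstract sheaf $\M$ back to the coadmissible module $M=\M(X)$ and conclude that localising $M$ reproduces $\M$. Since this is exactly the content of property (iv) of the sites $\D_n$ as recalled in the paper, the gap is one of citation rather than substance.
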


\begin{defn}
Let $X$ be an affinoid with free tangent sheaf. An object $\M^\bullet$ in $\mathrm{D}(\w{\D}_X)$ is a \textbf{$\C$-complex} if
\begin{enumerate}[(i)]
\item $\M^\bullet_n:=\D_n\widetilde{\otimes}^\mathbb{L}_{\w{\D}_X} \M^\bullet\in \mathrm{D}^b_{\mathrm{coh}}(\D_n)$ for each $n$, and
\item the natural morphism $\mathrm{H}^i(\M^\bullet)\to \varprojlim \mathrm{H}^i(\M^\bullet_n)$ is an isomorphism for each $i$.
\end{enumerate}
\end{defn}
Equivalently (\cite[Proposition 8.5]{Bode6Op}), $\M^\bullet$ is a $\C$-complex if and only if each of its cohomology groups is a coadmissible $\w{\D}_X$-module, and $\M^\bullet_n$ is bounded for each $n$. It is straightforward to glue this definition to obtain a subcategory $\mathrm{D}_\C\subseteq \mathrm{D}(\w{\D}_X)$ of $\C$-complexes for any smooth rigid analytic $K$-variety $X$.

The behaviour of $\C$-complexes under our six operations is analogous to that of coherence in the classical setting.
\begin{thm}[{\cite[Theorem 1.3]{Bode6Op}}]
\label{Ccomplexprops}
\leavevmode
\begin{enumerate}[(i)]
\item Let $i: X\to Y$ be a closed embedding of smooth rigid analytic $K$-varieties. Then $i_+$ and $i^!$ induce equivalences of categories between $\mathrm{D}_\C(\w{D}_X)$ and those $\C$-complexes in $\mathrm{D}_\C(\w{D}_Y)$ whose cohomology groups are supported on $X$.
\item If $f: X\to Y$ is a smooth morphism, then $f^!$ preserves $\C$-complexes.
\item If $f: X\to Y$ is a projective morphism, then $f_+$ preserves $\C$-complexes.
\item $\mathbb{D}_X$ preserves $\mathrm{D}_\C(\w{\D}_X)$ and there is a natural isomorphism $\M^\bullet\cong \mathbb{D}^2\M^\bullet$ for any $\C$-complex $\M^\bullet$.  
\end{enumerate}
\end{thm}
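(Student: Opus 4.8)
The plan is to verify all four assertions locally on an affinoid $X=\Sp A$ with free tangent sheaf, comparing each $\w{\D}_X$-theoretic operation with its classical analogue on the Banach algebras $\D_n$. Since $\w{\D}_X(X)=\varprojlim \D_n$ is Fr\'echet--Stein, each $\D_n$ is (right) flat over $\w{\D}_X(X)$, so the reduction functor $\D_n\widetilde{\otimes}^{\mathbb{L}}_{\w{\D}_X}(-)$ is exact and simply computes $(-)_n$. The heart of the argument is then a family of base-change isomorphisms of the shape $T(\M^\bullet)_n\cong T_n(\M^\bullet_{\sigma(n)})$, where $T$ is one of the operations, $T_n$ is the corresponding functor on $\mathrm{D}^b_{\mathrm{coh}}(\D_n)$, and $\sigma(n)\geq n$ is a possible cofinal reindexing forced by the fact that an operation need not preserve the chosen Lie lattice exactly. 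Granting such compatibilities, condition (i) in the definition of a $\C$-complex (boundedness and coherence of each $T(\M^\bullet)_n$) reduces to the corresponding finiteness statement for coherent $\D_n$-modules, while condition (ii) follows by passing to $\varprojlim$ over the resulting coadmissible system, using the vanishing of $\varprojlim^{(1)}$ guaranteed by coadmissibility.

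For the three ``formal'' operations the finite-level input is standard. For a closed embedding $i$ one factors it locally into graphs of coordinate functions and invokes the arithmetic Kashiwara equivalence for $\D_n$-modules; coadmissibility and the support condition persist because completed localisation is exact and commutes with $i_+$ and $i^!$, giving (i). For a smooth $f$, the transfer bimodule $\w{\D}_{X\to Y}$ is $\O_X$-flat and $f^!$ is, up to shift, the pullback of the underlying $\O$-modules with their connection; at each level this is the smooth pullback of coherent $\D_n$-modules, which preserves $\mathrm{D}^b_{\mathrm{coh}}$ by flat base change and finite Tor-dimension, giving (ii). For duality, flatness of $\D_n$ lets one commute $\mathrm{R}\mathcal{H}om_{\w{\D}_X}(-,\w{\D}_X)$ with the reduction functor, so that $\mathbb{D}$ becomes the finite-level dualising functor $\mathrm{R}\mathcal{H}om_{\D_n}(-,\D_n)$; coherence and the biduality isomorphism $\M^\bullet\cong \mathbb{D}^2\M^\bullet$ then descend from the fact that each $\D_n$ is Auslander regular of finite global dimension, for which biduality is classical. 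Reassembling the levels yields (iv).

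The main obstacle is (iii). Here the finite-level statement is genuinely analytic rather than a homological formality: one must know that for a projective $f$ the higher direct images $\mathrm{R}^q f_*$ of the relative de Rham (Spencer) complex of a coherent $\D_n$-module remain coherent over $\D_n$ on the target. This is the $\D$-module incarnation of Kiehl's finiteness theorem for proper morphisms of rigid spaces, and it is what ultimately forces both the boundedness and the coherence in condition (i) after pushforward. A second, related point is that one must commute the derived pushforward $\mathrm{R}f_*$ with the level reduction $\D_n\widetilde{\otimes}^{\mathbb{L}}_{\w{\D}_Y}(-)$ on the target; this base change, combined with the coherence just described, produces the coadmissible system whose inverse limit establishes condition (ii). Once (iii) is secured, the remaining bookkeeping---tracking the reindexing $\sigma(n)$ and the $\varprojlim^{(1)}$ vanishing---is routine.
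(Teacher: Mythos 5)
First, a structural point: the paper does not prove Theorem \ref{Ccomplexprops} at all --- it is imported verbatim from \cite[Theorem 1.3]{Bode6Op}, with the extension of parts (iii) and (iv) to non-discretely-valued $K$ credited to \cite{Bodepoincare} and \cite[Proposition 6.5]{Bodegldim}. So there is no in-paper proof to compare against; the most one can say is that your outline is consistent with the ``typical strategy'' the paper sketches just after the theorem (reduce to the Banach levels $\D_n$ over a D-regular affinoid, treat each level by coherent module theory, and control the limit as $n\to\infty$).

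That said, as a proof your proposal has genuine gaps, all of the same kind: it treats as ``standard'' or ``classical'' inputs that are in fact the main theorems of entire companion papers. First, you assert that each $\D_n$ is flat over $\w{\D}_X(X)$, so that the reduction functor is exact on everything; what the Fr\'echet--Stein formalism actually provides is flatness of $\D_n$ over $\D_{n+1}$ together with acyclicity of \emph{coadmissible} modules for $\D_n\widetilde{\otimes}_{\w{\D}_X}(-)$ (\cite[Corollary 5.38]{Bode6Op}, which is exactly what the present paper invokes in the proof of Proposition \ref{Kashiwaratest}); plain flatness over the Fr\'echet--Stein algebra is not available, so your base-change isomorphisms $T(\M^\bullet)_n\cong T_n(\M^\bullet_{\sigma(n)})$ have to be constructed and verified on $\C$-complexes directly rather than deduced from exactness. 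Second, for (iv) you invoke Auslander regularity of $\D_n$ as if it were classical; in fact $\h{U_\A(\pi^n\L)}\otimes_R K$ is only known to be Auslander regular after passing to a D-regular model, and the existence of D-regular covers is the main theorem of \cite{Bodegldim} --- without it the reduction to a free rank-one module and the biduality $\M^\bullet\cong\mathbb{D}^2\M^\bullet$ do not get off the ground, which is precisely why (iv) was originally restricted to discretely valued $K$. Third, for (iii) you correctly locate the difficulty in a finite-level coherence theorem for $\mathrm{R}f_*$ of the Spencer complex, but this is not a formal incarnation of Kiehl's theorem: one must in addition prove the commutation of $\mathrm{R}f_*$ with the level-change functors, the independence of the resulting reindexing, and the $\varprojlim$-identification on the target, which together constitute \cite{Bodeproper} and \cite{Bodepoincare}. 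Your outline is the right road map, but each of these three waypoints is a theorem, not a remark.
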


Points (iii) and (iv) were initially proved only over discretely valued fields. See \cite{Bodepoincare} and \cite[Proposition 6.5]{Bodegldim} for the generalization to arbitrary $K$.

A typical strategy in proving statements about $\C$-complexes is the following: We say that a smooth affinoid $\Sp A$ is \textbf{D-regular} if it admits an admissible affine formal model $\A$ and an $(R, \A)$-Lie lattice $\L$ which is free as an $\A$-module such that $\h{U_{\A}(\pi^n\L)}\otimes_RK$ is Auslander regular for all $n\geq 0$. The main result of \cite{Bodegldim} is that every smooth rigid analytic $K$-variety can be covered by D-regular affinoids (\cite[Theorem 1.1]{Bodegldim}). Locally, we can thus find suitable lattices such that any object in $\mathrm{D}_{\mathrm{coh}}^b(\D_n)$ is perfect, reducing many arguments to the case of a free $\D_n$-module of rank one, and a study of the limit as $n$ goes to infinity.

We remark that in some instances, behaviour of the direct image functor can be quite subtle, forcing us to consider the following restrictions on the morphisms we consider.

\begin{defn}
	A separated morphism $f: X\to Y$ between smooth rigid analytic $K$-varieties is called \textbf{managable} if there exists an admissible covering $(Y_i)$ of $Y$ such that for each $i$, the following is satisfied:
	\begin{enumerate}[(i)]
		\item $Y_i$ is a D-regular affinoid.
		\item $f^{-1}Y_i$ has only countably many connected components.
		\item The functor $\mathrm{pr}_{i, *}$ for the projection morphism $\mathrm{pr}_i: f^{-1}Y_i\times Y_i\to Y_i$ has finite cohomological dimension.
	\end{enumerate}
\end{defn}

\begin{prop}
	\label{compandadj}
	Let $f: X\to Y$, $g: Y\to Z$ be morphisms of smooth rigid analytic $K$-varieties.
	\begin{enumerate}[(i)]
		\item (See \cite[Proposition 7.5]{Bode6Op})There is a natural isomorphism
		\begin{equation*}
			(gf)^!\M^\bullet\cong f^!g^!\M^\bullet
		\end{equation*}
		for any $\M^\bullet\in \mathrm{D}(\w{\D}_Z)$.
		\item (See \cite[Lemma 7.8, Corollary 9.8]{Bode6Op}) If $f$ is managable or $g$ is smooth, then there is a natural isomorphism
		\begin{equation*}
			(gf)_+\M^\bullet\cong g_+f_+\M^\bullet
		\end{equation*}
		for any $\M^\bullet\in \mathrm{D}_{\C}(\w{\D}_X)$.
		\item (See \cite[Theorem 6.23]{Bodegldim}) If $f$ is managable, there is a natural isomorphism
		\begin{equation*}
			\mathrm{R}\mathcal{H}om_{\w{\D}_Y}(f_!\M^\bullet, \N^\bullet)\cong \mathrm{R}f_*\mathcal{H}om_{\w{\D}_X}(\M^\bullet, f^!\N^\bullet)
		\end{equation*}
		for any $\M^\bullet\in \mathrm{D}_{\C}(\w{\D}_X)$ with $f_!\M^\bullet\in \mathrm{D}_{\C}(\w{\D}_Y)$ and any $\N^\bullet\in \mathrm{D}_{\C}(\w{\D}_Y)$ with $f^!\N^\bullet\in \mathrm{D}_{\C}(\w{\D}_X)$.
		\item (See \cite[Theorem 6.23]{Bodegldim}) If $f$ is managable, there is a natural isomorphism
		\begin{equation*}
			\mathrm{R}f_*\mathrm{R}\mathcal{H}om_{\w{\D}_X}(f^+\N^\bullet, \M^\bullet)\cong \mathrm{R}\mathcal{H}om_{\w{\D}_Y}(\N^\bullet, f_+\M^\bullet)
		\end{equation*}
		for any $\M^\bullet\in \mathrm{D}_{\C}(\w{\D}_X)$ with $f_+\M^\bullet\in \mathrm{D}_{\C}(\w{\D}_Y)$ and any $\N^\bullet\in \mathrm{D}_{\C}(\w{\D}_Y)$ with $f^+\N^\bullet\in \mathrm{D}_{\C}(\w{\D}_X)$.
	\end{enumerate}
\end{prop}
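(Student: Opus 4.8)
The plan is to treat these as the rigid-analytic incarnations of the standard $\D$-module compatibilities, so that each reduces to a formal manipulation of the transfer bimodules once the behaviour of $\widetilde{\otimes}^{\mathbb{L}}$ and $\mathrm{R}f_*$ is under control. The managability hypothesis is imposed precisely to force the relevant derived functors to behave as in the bounded algebraic setting; the only genuinely analytic input is the verification of the bimodule isomorphisms, which may be checked on affinoids.

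For (i) I would compute $f^!g^!$ directly. Since $f^{-1}$ is exact and symmetric monoidal, it commutes with $f^{-1}(-)$ and with $\widetilde{\otimes}^{\mathbb{L}}$, so everything comes down to the transfer-bimodule base change
\[ \w{\D}_{X\to Y}\widetilde{\otimes}^{\mathbb{L}}_{f^{-1}\w{\D}_Y}f^{-1}\w{\D}_{Y\to Z}\cong\w{\D}_{X\to Z} \]
as $(\w{\D}_X,(gf)^{-1}\w{\D}_Z)$-bimodules, which unwinds from $\O_X\widetilde{\otimes}_{f^{-1}\O_Y}f^{-1}\O_Y\cong\O_X$ together with $f^{-1}g^{-1}=(gf)^{-1}$. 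The shifts combine correctly because $\dim X-\dim Z=(\dim X-\dim Y)+(\dim Y-\dim Z)$. No hypothesis beyond smoothness enters here, so I would not invoke managability for this part.

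For (ii) the relevant composite is
\[ g_+f_+\M^\bullet=\mathrm{R}g_*\big(\w{\D}_{Z\leftarrow Y}\widetilde{\otimes}^{\mathbb{L}}_{\w{\D}_Y}\mathrm{R}f_*(\w{\D}_{Y\leftarrow X}\widetilde{\otimes}^{\mathbb{L}}_{\w{\D}_X}\M^\bullet)\big), \]
and the crux is a projection formula moving $\w{\D}_{Z\leftarrow Y}\widetilde{\otimes}^{\mathbb{L}}_{\w{\D}_Y}-$ past $\mathrm{R}f_*$ to yield $\mathrm{R}f_*\big(f^{-1}\w{\D}_{Z\leftarrow Y}\widetilde{\otimes}^{\mathbb{L}}_{f^{-1}\w{\D}_Y}-\big)$. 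Combined with the $\leftarrow$-analogue of the base change from (i) and $\mathrm{R}g_*\mathrm{R}f_*=\mathrm{R}(gf)_*$, this gives the claim. The hard part is exactly this projection formula: in the unbounded bornological derived category it is not automatic, and finite cohomological dimension of the $\mathrm{pr}_{i,*}$ (managability) is what lets one commute $\mathrm{R}f_*$ with a potentially unbounded tensor product; when $g$ is smooth one can instead argue locally and bound degrees by hand, which is why the alternative hypothesis suffices.

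For (iii) and (iv), which are relative-duality adjunctions, I would work locally on the target and factor $f$ into a closed immersion followed by a smooth projection, proving each separately. For a closed immersion, Kashiwara's equivalence (Theorem \ref{Ccomplexprops}(i)) makes $f_+$ and $f^!$ mutually inverse and reduces the identity to a statement about coadmissible modules supported on $X$; for a smooth projection it is a form of relative Serre duality, assembled from the duality functor $\mathbb{D}$ and the tensor--Hom adjunction. I expect (iv) to follow from (iii) purely formally, using $f^+=\mathbb{D}_Xf^!\mathbb{D}_Y$, $f_!=\mathbb{D}_Yf_+\mathbb{D}_Y$ and biduality $\mathbb{D}^2\cong\mathrm{id}$ on $\mathrm{D}_\C$ (Theorem \ref{Ccomplexprops}(iv)). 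Throughout, the single most delicate point is the interchange of $\mathrm{R}f_*$ with completed derived tensor products and internal Homs; once that is secured — and this is where the analysis and the cohomological-dimension bound are spent — the rest is the standard diagram-chasing the introduction promises.
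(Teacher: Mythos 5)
The paper offers no proof of this proposition: all four parts are imported verbatim from \cite{Bode6Op} and \cite{Bodegldim}, so there is no in-text argument to compare yours against. Judged against the proofs in those references, your outline follows the standard strategy: (i) is indeed the transfer-bimodule cocycle $\w{\D}_{X\to Y}\widetilde{\otimes}^{\mathbb{L}}_{f^{-1}\w{\D}_Y}f^{-1}\w{\D}_{Y\to Z}\cong \w{\D}_{X\to Z}$ with no hypotheses beyond smoothness of the spaces, (ii) is a projection formula plus $\mathrm{R}g_*\mathrm{R}f_*=\mathrm{R}(gf)_*$, and (iii)/(iv) are relative duality via the graph factorization. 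Two places where your sketch is thinner than the actual arguments deserve flagging. First, in (i) the bimodule isomorphism does not simply ``unwind'': one must show the derived tensor product is concentrated in degree zero, which in the bornological setting requires the existence of resolutions that are simultaneously $\w{\D}$-flat and $\O$-flat (this is the content of \cite[subsection 7.1]{Bode6Op}) rather than ordinary base-change cancellation. Second, your deduction of (iv) from (iii) needs more than biduality on objects: after substituting $\mathbb{D}\M^\bullet$, $\mathbb{D}\N^\bullet$ into (iii) you must know that $\mathbb{D}$ induces isomorphisms $\mathrm{R}\mathcal{H}om(\mathbb{D}\mathcal{B}^\bullet,\mathbb{D}\mathcal{A}^\bullet)\cong\mathrm{R}\mathcal{H}om(\mathcal{A}^\bullet,\mathcal{B}^\bullet)$ of internal Hom complexes on $\mathrm{D}_{\C}$, i.e.\ full faithfulness at the sheafy level, and you must check that the coadmissibility side conditions ($f_!\M^\bullet\in\mathrm{D}_{\C}$ versus $f_+\M^\bullet\in\mathrm{D}_{\C}$, etc.) transform into one another under $\mathbb{D}$. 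Neither point is a wrong turn, but both are where the cited proofs spend their effort, alongside the finite-cohomological-dimension bookkeeping you correctly attribute to managability.
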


Lastly, we point out that the Kashiwara equivalence in Theorem \ref{Ccomplexprops}.(i) implies the following.
\begin{prop}
	\label{Kashiwaratest}
	Let $i: Z\to X$ be a Zariski closed embedding of smooth rigid analytic $K$-varieties. If $\M^\bullet\in \mathrm{D}(\w{\D}_Z)$ such that $i_+\M^\bullet\in \mathrm{D}_\C(\w{\D}_X)$, then $\M^\bullet\in \mathrm{D}_\C(\w{\D}_Z)$.
\end{prop}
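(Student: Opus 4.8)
The plan is to deduce the statement from the Kashiwara equivalence of Theorem \ref{Ccomplexprops}.(i) by exhibiting $\M^\bullet$ as $i^!i_+\M^\bullet$. First I would record two elementary observations. Since $i$ is a closed embedding, $\mathrm{R}i_*$ is exact and its essential image consists of sheaves supported on $Z$; hence every cohomology group of $i_+\M^\bullet=\mathrm{R}i_*(\w{\D}_{X\leftarrow Z}\widetilde{\otimes}^\mathbb{L}_{\w{\D}_Z}\M^\bullet)$ is supported on $Z$. By hypothesis $i_+\M^\bullet$ is a $\C$-complex, so it is a $\C$-complex whose cohomology groups are supported on $Z$, and Theorem \ref{Ccomplexprops}.(i) applies: the functor $i^!$ carries it into $\mathrm{D}_\C(\w{\D}_Z)$. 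Thus $i^!i_+\M^\bullet\in \mathrm{D}_\C(\w{\D}_Z)$, regardless of whether $\M^\bullet$ is itself a $\C$-complex.

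The crux is then to identify $i^!i_+\M^\bullet$ with $\M^\bullet$ for our (a priori arbitrary) object, i.e.\ to establish the unit isomorphism $\mathrm{id}\xrightarrow{\sim} i^!i_+$ on all of $\mathrm{D}(\w{\D}_Z)$ rather than merely on $\C$-complexes. Since $i^{-1}\mathrm{R}i_*\cong \mathrm{id}$ for a closed embedding, unwinding the definitions gives
\begin{equation*}
i^!i_+\M^\bullet\cong \left(\w{\D}_{Z\to X}\widetilde{\otimes}^\mathbb{L}_{i^{-1}\w{\D}_X}\w{\D}_{X\leftarrow Z}\right)\widetilde{\otimes}^\mathbb{L}_{\w{\D}_Z}\M^\bullet[\mathrm{dim}Z-\mathrm{dim}X],
\end{equation*}
so the natural transformation $\mathrm{id}\to i^!i_+$ is induced by a morphism of $(\w{\D}_Z,\w{\D}_Z)$-bimodules
\begin{equation*}
\w{\D}_Z[\mathrm{dim}X-\mathrm{dim}Z]\to \w{\D}_{Z\to X}\widetilde{\otimes}^\mathbb{L}_{i^{-1}\w{\D}_X}\w{\D}_{X\leftarrow Z}.
\end{equation*}
I would check that this is an isomorphism; being a statement about the transfer bimodules alone it does not involve $\M^\bullet$, and once established it shows that $i^!i_+$ is naturally isomorphic to the identity on the whole derived category (the module category admits flat resolutions, so the derived tensor products are well-behaved and an isomorphism of bimodules induces one of the associated functors). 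Granting this, the two steps combine: $\M^\bullet\cong i^!i_+\M^\bullet\in \mathrm{D}_\C(\w{\D}_Z)$, as desired.

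The main obstacle is precisely the bimodule computation above, i.e.\ extending the Kashiwara isomorphism $i^!i_+\cong \mathrm{id}$ from $\C$-complexes to arbitrary objects of $\mathrm{D}(\w{\D}_Z)$. This is local on $Z$ and on $X$, so I would reduce to the case where $X=\Sp A$ carries a coordinate system and $Z$ is cut out by part of it; factoring the embedding into successive hypersurfaces reduces further to a single coordinate $t$, where the composite bimodule is computed by the two-term Spencer--Koszul complex encoding the action of $t$ and $\partial_t$. The required isomorphism then follows from the relation $[\partial_t,t]=1$ by a functorial computation valid for any module, coadmissible or not --- exactly as in the classical proof of Kashiwara's equivalence, but carried out with the completed tensor product $\widetilde{\otimes}$ of \cite{Bode6Op}. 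The one point demanding care is the interaction of this resolution with the completed (bornological) tensor product and the passage to the inverse limit over the $\D_n$; no coherence input is needed for the identity of functors itself.
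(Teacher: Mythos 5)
Your argument is correct in outline but takes a genuinely different route from the paper, and the step you defer is where essentially all of the content lies. The paper never proves the unit isomorphism $\mathrm{id}\to i^!i_+$ on all of $\mathrm{D}(\w{\D}_Z)$. Instead it passes to right modules, uses the explicit formula $i_+^r\M\cong i_*\M\widetilde{\otimes}_K K\langle \partial_{s+1},\hdots,\partial_m\rangle$ to see that $i_+^r$ is \emph{exact and conservative}, and then, for the specific modules at hand (those whose pushforward is coadmissible), identifies $i_r^!i_+^r\M$ with the $\I$-torsion of $i_+^r\M$ using results already established for coadmissible modules; the unit $\M\to i_r^!i_+^r\M$ is shown to be an isomorphism only a posteriori, because it becomes one after applying the conservative functor $i_+^r$. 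The boundedness half of the $\C$-complex condition is then checked by a second conservativity argument at each level $n$. Your route, by contrast, requires the bimodule quasi-isomorphism $\w{\D}_{Z\to X}\widetilde{\otimes}^{\mathbb{L}}_{i^{-1}\w{\D}_X}\w{\D}_{X\leftarrow Z}\cong \w{\D}_Z[\mathrm{dim}X-\mathrm{dim}Z]$, which is a strictly stronger statement than anything the paper invokes and is not available off the shelf in the cited references. It does hold, and if established it makes the proposition (and more) an immediate corollary of Theorem \ref{Ccomplexprops}.(i), which is what your approach buys. But be aware that the point you label as merely ``demanding care'' is the actual crux: the vanishing of the cokernel of left multiplication by a coordinate $t$ on $\w{\D}_X/\w{\D}_X t$ amounts to solving $tQ=P$ by the division $Q_{k+1}=-P_k/(k+1)$, and this operation does \emph{not} preserve the level-$n$ convergence condition defining $\D_n$ (it costs a factor growing like $k$, which is only absorbed by passing from radius $n+1$ to radius $n$). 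So the Koszul computation cannot be carried out ``level by level over the $\D_n$'' and then assembled; it must be done directly on the Fr\'echet completion $\w{\D}$, where one checks that the division defines a bounded splitting and hence a strict epimorphism, after which exactness of $\widetilde{\otimes}_K$ lets you tensor with an arbitrary object of the left heart. You should also say a word about associativity of the completed derived tensor product to reduce the unit to the bimodule statement. None of this is a fatal gap, but it is a substantial piece of non-archimedean analysis that the paper's conservativity argument is specifically designed to avoid.
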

\begin{proof}
	As the question is local, we can assume that $Z$ and $X$ are both affinoid admitting a local coordinate system, with $Z$ being given as the vanishing set of some coordinates. Suppose $X$ has local coordinate system $x_1, \hdots, x_m, \partial_1, \hdots, \partial_m$, and $Z=\{x_{s+1}=\hdots =x_m=0\}$. The natural choice of lattices then induces compatible sites $Z_n$ and $X_n$ respectively. 
	
	Let $\I=(x_{s+1}, \hdots, x_m)$ denote the vanishing ideal of $Z$. Using the equivalence between left and right modules (and the fact that this equivalence respects coadmissibility, \cite[Theorem 3.5]{DcapTwo}), we can work with right $\w{\D}$-modules instead. We let $i_+^r$ and $i^!_r$ denote the corresponding functors for right modules.
	
	Note that in this case, 
	\begin{equation*}
		i_+^r\M\cong i_*(\M\widetilde{\otimes}_K K\langle \partial_{s+1},\hdots, \partial_m\rangle)\cong i_*\M\widetilde{\otimes}_K K\langle \partial_{s+1}, \hdots, \partial_m\rangle
	\end{equation*}
	 for any $\M\in \mathrm{Mod}_{\mathrm{Shv}(Z, LH(\h{\B}c_K))}(\w{\D}_Z)$ by \cite[Lemma 3.30]{Bode6Op}. In particular, $i_+^r$ is conservative.
	
	Suppose that $\M$ is a right $\w{\D}_Z$-module such that $i_+^r\M$ is right coadmissible, which is then supported on $Z$ by construction. By \cite[Theorem 9.2, Lemma 9.3]{Bode6Op}, 
	\begin{equation*}
		i_r^!i^r_+\M\cong i^{-1}((i_+^r\M)[\I])
	\end{equation*}
	is a coadmissible right $\w{\D}_Z$-module. Here, $(i_+^r\M)[\I]$ denotes the $\I$-torsion of $i_+^r\M$.
	
	We thus have a natural morphism $\M\to i_r^!i_+^r\M$ which becomes an isomorphism after applying $i_+^r$. But now $i_+^r: LH(\w{\D}_Z^\mathrm{op})\to LH(\w{\D}_X^\mathrm{op})$ is conservative, so $\M\cong i^!_ri_+^r\M$ is coadmissible.
	
	It thus follows from exactness of $i_+^r$ that if $\M^\bullet\in \mathrm{D}(\w{\D}_Z^\mathrm{op})$ such that $i_+^r\M^\bullet$ is a $\C$-complex, then $\M^\bullet$ has coadmissible cohomology. 
	
	It now remains to show that for each $n$, the coherent $\D_{Z_n}$-module
	\begin{equation*}
		\mathrm{H}^j(\M^\bullet\widetilde{\otimes}_{\w{\D}_Z}^{\mathbb{L}}\D_{Z_n})\cong \mathrm{H}^j(\M^\bullet)\widetilde{\otimes}_{\w{\D}_Z}\D_{Z_n}
	\end{equation*}
	vanishes for all but finitely many $j$ (see \cite[Corollary 5.38]{Bode6Op} for the fact that coadmissible modules are $-\widetilde{\otimes}_{\w{\D}_Z}\D_{Z_n}$-acyclic).
	
	But
	\begin{equation*}
		i_*(\mathrm{H}^j(\M^\bullet)\widetilde{\otimes}_{\w{\D}_Z}\D_{Z_n}\widetilde{\otimes}_{\D_{Z_n}} i^{-1} \frac{\D_{X_n}}{\mathcal{I}\D_{X_n}})\cong (i_+^r\mathrm{H}^j(\M^\bullet))\widetilde{\otimes}_{\w{\D}_X} \D_{X_n},
	\end{equation*}
	which vanishes for all but finitely many $j$ by the definition of a $\C$-complex. The result thus follows again from $i_*(-\widetilde{\otimes}_{\D_{Z_n}} i^{-1} \frac{\D_{X_n}}{\mathcal{I}\D_{X_n}})$ being conservative -- compare also \cite[Theorem 4.10]{DcapTwo}.
\end{proof}

\section{Further properties of $\mathcal{C}$-complexes}

In this section, we collect supplementary results on the theory of $\mathcal{C}$-complexes. More precisely, we discuss localisation triangles (excision) in subsection 3.1 (see Proposition \ref{triangle}), exterior tensor products and their behaviour under the six operations in subsection 3.2, and the commutation of the duality functor with smooth pullback (Theorem \ref{pullbackdual}).

\subsection{Localisation triangles}
\begin{lem}
\label{injisflabby}
Let $X$ be a smooth rigid analytic $K$-variety. If $\F$ is an injective object in $\mathrm{Mod}_{\mathrm{Shv}(X, LH(\h{\B}c_K))}(\w{\D}_X)$, then it is flabby in the sense that for any admissible open $U\subseteq X$, the restriction morphism $\F(X)\to \F(U)$ is an epimorphism.
\end{lem}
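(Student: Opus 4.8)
The plan is to adapt the classical argument that injective modules over a sheaf of rings are flabby, carried out internally to the monoidal category $\V:=LH(\h{\B}c_K)$ rather than for plain $\Hom$-sets.

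First I would fix the admissible open immersion $j\colon U\to X$ and write $\w{\D}_U=j^{-1}\w{\D}_X$ for the restricted sheaf of rings. The restriction functor $j^{-1}\colon \mathrm{Mod}_{\mathrm{Shv}(X, \V)}(\w{\D}_X)\to \mathrm{Mod}_{\mathrm{Shv}(U, \V)}(\w{\D}_U)$ is exact and cocontinuous between Grothendieck abelian categories, so by the adjoint functor theorem it admits a left adjoint $j_!$ (extension by zero), which one may also build directly as the sheafification of the obvious presheaf. Applying the counit of $j_!\dashv j^{-1}$ to $\w{\D}_U=j^{-1}\w{\D}_X$ produces a canonical morphism $\epsilon\colon j_!\w{\D}_U\to \w{\D}_X$ of $\w{\D}_X$-modules, and the first step is to check that $\epsilon$ is a monomorphism. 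Since $j^{-1}\epsilon$ is an isomorphism by the triangle identities and the cokernel of $\epsilon$ restricts to zero on $U$, this is the statement that $\epsilon$ realises $j_!\w{\D}_U$ as the subobject of $\w{\D}_X$ of sections extended by zero outside $U$; on the rigid analytic site this is verified on sections over admissible affinoid subdomains.

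Next I would pass to the internal Hom functor $\underline{\Hom}_{\w{\D}_X}(-,\F)\colon \mathrm{Mod}_{\mathrm{Shv}(X,\V)}(\w{\D}_X)^{\mathrm{op}}\to \V$. Using that $\w{\D}_X$ is the free rank-one module one identifies $\underline{\Hom}_{\w{\D}_X}(\w{\D}_X,\F)\cong \F(X)$, and using the $\V$-enriched adjunction $j_!\dashv j^{-1}$ together with $\underline{\Hom}_{\w{\D}_U}(\w{\D}_U, j^{-1}\F)\cong (j^{-1}\F)(U)=\F(U)$ one identifies $\underline{\Hom}_{\w{\D}_X}(j_!\w{\D}_U,\F)\cong \F(U)$. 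By naturality the morphism $\underline{\Hom}_{\w{\D}_X}(\epsilon,\F)$ is exactly the restriction map $\F(X)\to \F(U)$, so it remains to show that $\underline{\Hom}_{\w{\D}_X}(\epsilon, \F)$ is an epimorphism in $\V$.

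Here lies the main obstacle, and the only point where the argument is not purely formal: ordinary injectivity of $\F$ only gives that $\Hom_{\w{\D}_X}(\w{\D}_X,\F)\to \Hom_{\w{\D}_X}(j_!\w{\D}_U,\F)$ is surjective as a map of abelian groups, i.e. that $\underline{\Hom}_{\w{\D}_X}(\epsilon,\F)$ is surjective on global sections, whereas we need an epimorphism in $\V$. To bridge this gap I would exploit the copower adjunction $\Hom_{\V}(V, \underline{\Hom}_{\w{\D}_X}(\M,\F))\cong \Hom_{\w{\D}_X}(V\widetilde{\otimes}_K\M,\F)$: testing against a generating family of flat objects $V$ of $\V$, the monomorphism $\epsilon$ stays a monomorphism after $V\widetilde{\otimes}_K-$, so injectivity of $\F$ makes $\Hom_{\V}(V,\underline{\Hom}_{\w{\D}_X}(\epsilon,\F))$ surjective for every such $V$; since these $V$ generate $\V$, a standard cokernel argument forces $\underline{\Hom}_{\w{\D}_X}(\epsilon,\F)$ to be an epimorphism, completing the proof. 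The crux is therefore to pin down a generating family of flat objects of $LH(\h{\B}c_K)$ along which $\widetilde{\otimes}_K$ is exact; this is the one genuinely analytic input, and I would expect it to follow from the description of $\V$ via $\mathrm{Ind}(\mathrm{Ban}_K)$ and flatness of the free Banach modules.
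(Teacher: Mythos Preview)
Your approach is correct and follows the same strategy as the paper: both use the monomorphism $j_!\w{\D}_U \to \w{\D}_X$ and injectivity of $\F$. The paper's proof is terser --- it simply identifies $\mathrm{Hom}_{\w{\D}_X}(j_!\w{\D}_U, \F)\cong \F(U)$ via ordinary $\mathrm{Hom}$, notes that $j_!\w{\D}_U\to\w{\D}_X$ is a monomorphism (by exactness of sheafification, \cite[Lemma~3.15]{Bode6Op}), and concludes directly, without isolating the passage from Hom-set surjectivity to an epimorphism in $LH(\h{\B}c_K)$ that you single out as ``the main obstacle''. Your copower-adjunction argument is a legitimate way to make this step explicit, and the ingredient you worry about is already available in the paper's framework: since $-\widetilde{\otimes}_K V$ is exact on $LH(\h{\B}c_K)\cong LH(\mathrm{Ind}(\mathrm{Ban}_K))$ for every $V$ (cf.\ \cite[Proposition~4.21]{Bode6Op}), any generating family of this Grothendieck category --- for instance the small projective generators coming from the elementary quasi-abelian structure on $\mathrm{Ind}(\mathrm{Ban}_K)$ --- serves as your ``generating family of flat objects''. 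So the extra care you take buys an honest justification of a step the paper leaves implicit, at the cost of some additional bookkeeping.
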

\begin{proof} 
Let $U\subseteq X$ be an admissible open subset and write $j: U\to X$ for the embedding. Let $j_!\w{\D}_U$ be the $\w{\D}_X$-module obtained by sheafifying
\begin{equation*}
V\mapsto \begin{cases}\w{\D}_U(V) \ \text{if} \ V\subseteq U\\
0 \ \text{otherwise}.
\end{cases}
\end{equation*} 
For any $\w{\D}_X$-module $\F$, we then have
\begin{equation*}
\mathrm{Hom}_{\w{\D}_X}(j_!\w{\D}_U, \F)\cong \F(U)
\end{equation*}
via the natural morphism. Since $j_!\w{\D}_U\to \w{\D}_X$ is a monomorphism in $\mathrm{Shv}(X, LH(\h{\B}c_K))$ (and hence in $\mathrm{Mod}_{\mathrm{Shv}(X, LH(\h{\B}c_K))}(\w{\D}_X)$) due to the exactness of sheafification (\cite[Lemma 3.15]{Bode6Op}), this implies that if $\mathcal{F}$ is an injective module, then $\F(X)\to \F(U)$ is an epimorphism, i.e. injective modules are flabby.
\end{proof} 

If $Z$ is a Zariski closed subvariety of a smooth rigid analytic $K$-variety $X$, let $j: U\to X$ denote the complement. Since $j$ is \'etale, we have $j_+j^!\M\cong \mathrm{R}j_*(\M|_U)$ for any $\w{\D}_X$-module $\M$. The kernel of the natural morphism
\begin{equation*}
	\M\to j_*(\M|_U)
\end{equation*}
is called the sheaf of sections of $\M$ supported on $Z$, and will be denoted $\underline{\Gamma}_Z(\M)$. By construction, 
\begin{equation*}
	\underline{\Gamma}_Z: \mathrm{Mod}_{\mathrm{Shv}(X, LH(\h{\B}c_K))}(\w{\D}_X)\to \mathrm{Mod}_{\mathrm{Shv}(X, LH(\h{\B}c_K))}(\w{\D}_X)
\end{equation*}
is a left exact functor, admitting a derived functor $\mathrm{R}\underline{\Gamma}_Z: \mathrm{D}(\w{\D}_X)\to \mathrm{D}(\w{\D}_X)$ by \cite[Theorem 14.3.1]{KS}.

We call $\mathrm{R}\underline{\Gamma}_Z$ the \textbf{local cohomology functor} and $\mathrm{R}^i\underline{\Gamma}_Z(\M)$ the $i$th local cohomology group with support in $Z$, compare also \cite{Kisin}.

\begin{prop}
\label{triangle}
Let $X$ be a smooth rigid analytic $K$-variety, $Z$ a Zariski closed subvariety and $U=X\setminus Z$. Let $i: Z\to X$ and $j: U\to X$ denote the embeddings.
\begin{enumerate}[(i)]
\item If $\mathcal{M}^\bullet\in \mathrm{D}(\wideparen{\mathcal{D}}_X)$, then there is a distinguished triangle
\begin{equation*}
\mathrm{R}\underline{\Gamma}_Z(\mathcal{M}^\bullet)\to \mathcal{M}^\bullet\to j_+j^!\M^\bullet\to
\end{equation*}
in $\mathrm{D}(\w{\D}_X)$.
\item If $Z$ is smooth, then
\begin{equation*}
i^!j_+\mathcal{M}^\bullet=0
\end{equation*}
for any $\mathcal{M}^\bullet\in \mathrm{D}_\mathcal{C}(\wideparen{\mathcal{D}}_U)$ such that $j_+\M^\bullet\in \mathrm{D}_\C(\w{\D}_X)$.
\item If $Z$ is smooth and $\M^\bullet\in \mathrm{D}_\C(\w{\D}_X)$ satisfies $j_+j^!\M^\bullet\in \mathrm{D}_\C(\w{\D}_X)$, then $\mathrm{R}\underline{\Gamma}_Z(\mathcal{M}^\bullet)\cong i_+i^!(\mathcal{M}^\bullet)$, so that we obtain a distinguished triangle
\begin{equation*}
i_+i^!(\mathcal{M}^\bullet)\to \mathcal{M}^\bullet\to j_+j^!\mathcal{M}^\bullet\to
\end{equation*}
in $\mathrm{D}(\wideparen{\mathcal{D}}_X)$.
\end{enumerate}
\end{prop}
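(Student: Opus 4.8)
The plan is to handle the three parts in sequence: prove (i) by hand via an injective resolution, and then reduce (ii) and (iii) to the already-available description of $i^!$ for closed embeddings together with the Kashiwara equivalence of Theorem \ref{Ccomplexprops}.(i). For (i), I would fix an injective resolution $\M^\bullet\to \I^\bullet$ in $\mathrm{Mod}_{\mathrm{Shv}(X, LH(\h{\B}c_K))}(\w{\D}_X)$. Lemma \ref{injisflabby} tells us each $\I^k$ is flabby, and since restriction along the open immersion $j$ has the exact left adjoint ``extension by zero'', each $\I^k|_U$ is again injective, hence flabby, over $\w{\D}_U$. Flabbiness gives, for every admissible open $V$, surjectivity of $\I^k(V)\to \I^k(V\cap U)$, so the unit $\I^k\to j_*(\I^k|_U)$ is an epimorphism of sheaves with kernel $\underline{\Gamma}_Z(\I^k)$ by definition. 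This produces a short exact sequence of complexes
\[
0\to \underline{\Gamma}_Z(\I^\bullet)\to \I^\bullet\to j_*(\I^\bullet|_U)\to 0,
\]
whose associated distinguished triangle is the one we want, once the outer terms are identified: $\underline{\Gamma}_Z(\I^\bullet)$ computes $\mathrm{R}\underline{\Gamma}_Z(\M^\bullet)$ by definition of the derived functor, while flabbiness of $\I^\bullet|_U$ makes it $j_*$-acyclic, so $j_*(\I^\bullet|_U)\cong \mathrm{R}j_*(\M^\bullet|_U)\cong j_+j^!\M^\bullet$ by the \'etale identity recalled before the statement.

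For (ii), I would apply the triangle of (i) to $\N^\bullet=j_+\M^\bullet$. Since $j$ is an open immersion, $j^!$ is restriction to $U$ and $j^!j_+\cong \mathrm{id}$, so the unit $\N^\bullet\to j_+j^!\N^\bullet$ is an isomorphism and the triangle forces $\mathrm{R}\underline{\Gamma}_Z(j_+\M^\bullet)=0$. Invoking the description of $i^!$ for a closed embedding as $i^{-1}\mathrm{R}\underline{\Gamma}_Z$ (the complex-level version of the computation used in the proof of Proposition \ref{Kashiwaratest}, see \cite[Theorem 9.2, Lemma 9.3]{Bode6Op}), we then obtain
\[
i^!j_+\M^\bullet\cong i^{-1}\mathrm{R}\underline{\Gamma}_Z(j_+\M^\bullet)=i^{-1}(0)=0.
\]

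For (iii), the triangle of (i) exhibits $\mathrm{R}\underline{\Gamma}_Z(\M^\bullet)$ as the fibre of a morphism between $\C$-complexes, hence as an object of $\mathrm{D}_\C(\w{\D}_X)$; applying $j^!$ to that triangle and using $j^!j_+\cong\mathrm{id}$ shows $j^!\mathrm{R}\underline{\Gamma}_Z(\M^\bullet)=0$, i.e.\ its cohomology is supported on $Z$. The Kashiwara equivalence of Theorem \ref{Ccomplexprops}.(i) then yields $\mathrm{R}\underline{\Gamma}_Z(\M^\bullet)\cong i_+i^!\mathrm{R}\underline{\Gamma}_Z(\M^\bullet)$. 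Finally I would apply $i^!$ to the triangle of (i): part (ii), applied to $j^!\M^\bullet\in \mathrm{D}_\C(\w{\D}_U)$ (which is a $\C$-complex by Theorem \ref{Ccomplexprops}.(ii), and whose pushforward $j_+j^!\M^\bullet$ is a $\C$-complex by hypothesis), gives $i^!j_+j^!\M^\bullet=0$, so that $i^!\mathrm{R}\underline{\Gamma}_Z(\M^\bullet)\cong i^!\M^\bullet$. Combining these isomorphisms gives $\mathrm{R}\underline{\Gamma}_Z(\M^\bullet)\cong i_+i^!\M^\bullet$, and substituting into the triangle of (i) produces the stated triangle.

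The main obstacle I anticipate lies not in the diagram chases of (ii) and (iii) --- which are formal once the inputs are in place --- but in making the homological bookkeeping of (i) rigorous in the quasi-abelian, $LH(\h{\B}c_K)$-valued sheaf setting: one must verify that injective objects are genuinely flabby (Lemma \ref{injisflabby}), that flabbiness is inherited under restriction to $U$ and implies $j_*$-acyclicity, and that $\underline{\Gamma}_Z(\I^\bullet)$ really computes the derived functor $\mathrm{R}\underline{\Gamma}_Z$. A secondary point requiring care is the precise availability of the identification $i^!\cong i^{-1}\mathrm{R}\underline{\Gamma}_Z$ for closed embeddings at the level of complexes rather than single modules; should only the module-level statement be cited, one would instead prove (ii) and (iii) together, exploiting that $\mathrm{R}\underline{\Gamma}_Z$ annihilates $\C$-complexes of the form $j_+(-)$ and that $i_+$ is conservative by Kashiwara.
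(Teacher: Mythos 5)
Parts (i) and (iii) of your proposal are essentially the paper's own argument: (i) is the flabbiness of injectives (Lemma \ref{injisflabby}) applied to a K-injective resolution, and (iii) is exactly the paper's combination of Kashiwara's equivalence with an application of $i^!$ to the triangle from (i), using (ii) to kill the third term.

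The genuine gap is in (ii). You deduce $\mathrm{R}\underline{\Gamma}_Z(j_+\M^\bullet)=0$ correctly from (i) and $j^!j_+\cong\mathrm{id}$, but the passage to $i^!j_+\M^\bullet=0$ rests on an identification $i^!\cong i^{-1}\mathrm{R}\underline{\Gamma}_Z$ (up to shift) for closed embeddings that is not available and is in fact false as a statement about arbitrary complexes: already for a point $i:\{0\}\to \mathbf{B}^1$ one has $i^!\O$ concentrated in a single degree with one-dimensional cohomology, while $i^{-1}\mathrm{R}\underline{\Gamma}_{\{0\}}(\O)$ is infinite-dimensional. The correct classical statement is $i_+i^!\cong \mathrm{R}\underline{\Gamma}_{Z}$ under hypotheses --- which is precisely part (iii), so invoking it to prove (ii) is circular. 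The result you cite from the proof of Proposition \ref{Kashiwaratest} (via \cite[Theorem 9.2, Lemma 9.3]{Bode6Op}) only computes $i^!i_+\M$ as the $\I$-torsion for modules already supported on $Z$; it says nothing about $i^!$ of $j_+\M^\bullet$, whose sections supported on $Z$ vanish but whose derived restriction to $Z$ must still be computed. Your fallback (``prove (ii) and (iii) together using conservativity of $i_+$'') does not close this either: conservativity lets you test $i^!j_+\M^\bullet=0$ after applying $i_+$, but you have no a priori identification of $i_+i^!j_+\M^\bullet$ with $\mathrm{R}\underline{\Gamma}_Z(j_+\M^\bullet)$. The paper's proof of (ii) is a genuine computation: writing $i^!j_+\M^\bullet$ via $\O_Z\widetilde{\otimes}^{\mathbb{L}}_{i^{-1}\O_X}i^{-1}\mathrm{R}j_*(-)$, replacing $i_*\O_Z$ locally by a Koszul resolution, and using the projection formulas for $j$ and for $i$ (the latter from \cite[Corollary 6.16]{Bodegldim}, valid only for $\C$-complexes) to move the Koszul complex inside $\mathrm{R}j_*$, where its entries become units. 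This is where the hypothesis $j_+\M^\bullet\in\mathrm{D}_{\C}(\w{\D}_X)$ is actually used --- the fact that your argument never invokes it is a symptom of the missing step.
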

\begin{proof}
\begin{enumerate}[(i)]
\item Let $\mathcal{F}$ be an injective object in $\mathrm{Mod}_{\mathrm{Shv}(X, LH(\h{\B}c_K))}(\w{\D}_X)$. Then the sequence
\begin{equation*}
0 \to \underline{\Gamma}_Z(\F)\to \F\to j_*j^{-1}\F\to 0
\end{equation*}
is exact, as injectivity implies flabbiness by Lemma \ref{injisflabby}. If $\M^\bullet\in \mathrm{D}(\w{\D}_X)$, taking a  K-injective resolution yields the result.
\item For any $\mathcal{F}\in \mathrm{D}(\O_U)$, the natural morphism
\begin{equation*}
(i_*\O_Z)\widetilde{\otimes}^\mathbb{L}_{\O_X} \mathrm{R}j_*\mathcal{F}\to \mathrm{R}j_*(j^{-1}i_*\O_Z\widetilde{\otimes}^\mathbb{L}_{\O_U}\mathcal{F})
\end{equation*}
is an isomorphism, as we can locally replace $i_*\O_Z$ by a Koszul resolution. Note that the right hand side is $0$, as elements of the vanishing ideal of $Z$ are units in $\O_U$.

It remains to show that the natural morphism
\begin{equation*}
i_*\O_Z\widetilde{\otimes}^\mathbb{L}_{\O_X} \N^\bullet\to \mathrm{R}i_*(\O_Z\widetilde{\otimes}^\mathbb{L}_{i^{-1}\O_X}i^{-1}\N^\bullet)
\end{equation*}  
is also an isomorphism whenever $\N^\bullet=\mathrm{R}j_*\M^\bullet$ is a $\C$-complex, as then 
\begin{align*}
\mathrm{R}i_*(i^!\mathrm{R}j_*\M^\bullet)[\mathrm{dim}X-\mathrm{dim} Z]&\cong \mathrm{R}i_*(\O_Z\widetilde{\otimes}^\mathbb{L}_{i^{-1}\O_X} i^{-1}\mathrm{R}j_*\M^\bullet)\\
&=0.
\end{align*}
implies the result.

But this is precisely the projection formula from \cite[Corollary 6.16]{Bodegldim}, noting that
\begin{equation*}
	i_+^r\w{\D}_Z\cong i_*\O_Z\widetilde{\otimes}_{\O_X}\w{\D}_X.
\end{equation*}

\item As $j_+j^!\M^\bullet$ is a $\C$-complex, so is $\mathrm{R}\underline{\Gamma}_Z(\M^\bullet)$ by \cite[Proposition 8.4]{Bode6Op}. By construction (or e.g. the distinguished triangle in (i)), each cohomology group of $\mathrm{R}\underline{\Gamma}_Z(\M^\bullet)$ is supported on $Z$, so in particular
\begin{equation*}
	i_+i^!\mathrm{R}\underline{\Gamma}_Z(\M^\bullet)\cong \mathrm{R}\underline{\Gamma}_Z(\M^\bullet)
\end{equation*}. 
by Theorem \ref{Ccomplexprops}.(i). 

Applying $i^!$ to the distinguished triangle in (i), we can invoke (ii) to obtain $i^!j_+j^!\M^\bullet=0$ and the isomorphism
\begin{equation*}
	i^!\mathrm{R}\underline{\Gamma}_Z(\M^\bullet)\cong i^!\M^\bullet,
\end{equation*}
and applying $i_+$ proves the result.
\end{enumerate}
\end{proof}

The following can then be regarded as a Mayer--Vietoris result for $\w{\D}$-modules.
\begin{prop}\label{MVforDcap}
Let $Z$, $Z'$ be Zariski closed subvarieties of a smooth rigid analytic $K$-variety $X$. For any $\M^\bullet\in \mathrm{D}(\w{\D}_X)$, there is a natural distinguished triangle
\begin{equation*}
\mathrm{R}\underline{\Gamma}_{Z\cap Z'}(\M^\bullet)\to\mathrm{R}\underline{\Gamma}_{Z}(\M^\bullet)\oplus\mathrm{R}\underline{\Gamma}_{Z'}(\M^\bullet)\to\mathrm{R}\underline{\Gamma}_{Z\cup Z'}(\M^\bullet)\to.
\end{equation*}
\end{prop}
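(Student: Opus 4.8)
The plan is to prove this by reducing the statement about local cohomology functors to a statement about the open complements, where Mayer–Vietoris is classical, and then to use the localisation triangle from Proposition \ref{triangle}.(i) to transfer the result. Let me write $U = X \setminus Z$, $U' = X \setminus Z'$, so that $X \setminus (Z \cap Z') = U \cup U'$ and $X \setminus (Z \cup Z') = U \cap U'$. Denote the various open embeddings by $j_U : U \to X$, $j_{U'} : U' \to X$, $j_{U\cap U'} : U \cap U' \to X$, and $j_{U \cup U'} : U \cup U' \to X$.

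First I would reduce to the level of a single K-injective resolution: by Proposition \ref{triangle}.(i), for each closed subvariety $W$ there is a functorial triangle $\mathrm{R}\underline{\Gamma}_W(\M^\bullet) \to \M^\bullet \to (j_{X\setminus W})_+ (j_{X\setminus W})^! \M^\bullet \to$, and since $j$ is \'etale we have $(j)_+ j^! \M^\bullet \cong \mathrm{R}(j)_* (\M^\bullet|_{X\setminus W})$. So it suffices to produce a compatible Mayer–Vietoris triangle
\begin{equation*}
\mathrm{R}(j_{U\cup U'})_* (\M^\bullet|_{U\cup U'}) \to \mathrm{R}(j_U)_*(\M^\bullet|_U) \oplus \mathrm{R}(j_{U'})_*(\M^\bullet|_{U'}) \to \mathrm{R}(j_{U\cap U'})_*(\M^\bullet|_{U\cap U'}) \to
\end{equation*}
and to check its compatibility with the three localisation triangles via the octahedral axiom. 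The existence of such a triangle is the standard Mayer–Vietoris sequence for a K-injective complex $\F^\bullet$ resolving $\M^\bullet$: taking sections gives the short exact sequence of complexes $0 \to \F^\bullet(U\cup U') \to \F^\bullet(U) \oplus \F^\bullet(U') \to \F^\bullet(U\cap U') \to 0$ (exactness on the right using that injectives are flabby, Lemma \ref{injisflabby}, so the restriction maps are epimorphisms), and this sheafifies to the desired triangle of derived pushforwards.

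The cleanest way to assemble everything is the octahedral axiom applied to the composition of restriction morphisms. I would consider the natural morphisms $\M^\bullet \to \mathrm{R}(j_U)_* \M^\bullet|_U \to \mathrm{R}(j_{U\cap U'})_* \M^\bullet|_{U\cap U'}$ (the second coming from $U\cap U' \subseteq U$). The cone of the composite is $\mathrm{R}\underline{\Gamma}_{Z\cup Z'}(\M^\bullet)[1]$, the cone of the first morphism is $\mathrm{R}\underline{\Gamma}_Z(\M^\bullet)[1]$, and the octahedron identifies the cone of the second morphism with a complex whose shift is $\mathrm{R}\underline{\Gamma}_{Z'}\bigl(\mathrm{R}(j_U)_*\M^\bullet|_U\bigr)$; the point is then to identify this with $\mathrm{R}\underline{\Gamma}_{Z'}(\M^\bullet)$ up to the contribution already accounted for, producing exactly the asserted triangle. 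Concretely, I expect the main obstacle to be bookkeeping the compatibility of the connecting maps and verifying that local cohomology with support in $Z'$, computed after restricting to $U$, recovers the correct term — i.e., checking that the two routes around the octahedron agree functorially rather than just up to non-canonical isomorphism. This is purely formal (no nonarchimedean input beyond flabbiness of injectives), but it requires care with the derived functors; I would handle it by working entirely on the level of a fixed K-injective resolution, where all four local-cohomology complexes and all four pushforwards are computed by the same honest complex of sheaves, so that the Mayer–Vietoris sequence of sections is a genuine short exact sequence and all compatibilities are strict.
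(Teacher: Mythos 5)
Your proposal is correct and follows essentially the same route as the paper: both rest on flabbiness of injectives (Lemma \ref{injisflabby}) to obtain, on a K-injective resolution $\F$, the termwise short exact sequence $0\to j_*\F|_{U}\to j_{1*}\F|_{U_1}\oplus j_{2*}\F|_{U_2}\to j_{12*}\F|_{U_{12}}\to 0$ of pushforwards from the open complements together with the three localisation sequences. The paper avoids your octahedral-axiom bookkeeping (which, as sketched, would not directly produce the direct-sum middle term from a single octahedron) by assembling these into a $3\times 3$ commutative diagram of short exact sequences whose top row is then the desired sequence of $\underline{\Gamma}$'s by the nine lemma --- precisely the ``everything strict on a fixed K-injective resolution'' shortcut you identify at the end.
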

\begin{proof}
Let $j_1: U_1\to X$ and $j_2: U_2\to X$ be the complements of $Z$ and $Z'$, respectively. We write $j_{12}: U_{12}\to X$ for the complement of $Z\cup Z'$ (i.e., $U_{12}=U_1\cap U_2$), and let $j: U\to X$ be the complement of $Z\cap Z'$.

If $\mathcal{F}$ is an injective object, we have the following commutative diagram:
\begin{equation*}
\begin{xy}
\xymatrix{
&0\ar[d] & 0\ar[d] & 0\ar[d]\\
0\ar[r]&\underline{\Gamma}_{Z\cap Z'}(\mathcal{F})\ar[r] \ar[d] &\underline{\Gamma}_Z(\mathcal{F})\oplus \underline{\Gamma}_{Z'}(\mathcal{F})\ar[r] \ar[d] &\underline{\Gamma}_{Z\cup Z'}(\mathcal{F})\ar[r] \ar[d] &0\\
0\ar[r] &\mathcal{F}\ar[r] \ar[d] &\mathcal{F}\oplus \mathcal{F}\ar[r] \ar[d] &\mathcal{F} \ar[d] \ar[r] &0\\
0\ar[r] & j_*\mathcal{F}|_{U}\ar[r] \ar[d]&{j_1}_*\mathcal{F}|_{U_1}\oplus {j_2}_*\mathcal{F}|_{U_2}\ar[r] \ar[d]& {j_{12}}_*\mathcal{F}|_{U_{12}}\ar[r]\ar[d] & 0\\
&0 & 0 & 0 
}
\end{xy}
\end{equation*}
where each column is exact by Lemma \ref{injisflabby}. The middle row, induced by the diagonal mapping $\mathcal{F}\to \mathcal{F}\oplus \mathcal{F}$, is clearly exact. Note that the third row is also exact: it is left exact by the sheaf axioms, and right exact by the injectivity of $\mathcal{F}$.

Hence the first row is also exact, yielding the desired distinguished triangle.
\end{proof}
\subsection{Exterior tensor product}
Let $X$ and $Y$ be smooth rigid analytic $K$-varieties. Let $p_1: X\times Y\to X$ and $p_2: X\times Y\to Y$ denote the natural projections. 

For $\mathcal{M}^\bullet\in \mathrm{D}(\O_X)$, $\mathcal{N}^\bullet\in \mathrm{D}(\O_Y)$, we define the \textbf{exterior tensor product} by
\begin{equation*}
\mathcal{M}^\bullet\boxtimes \mathcal{N}^\bullet=\O_{X\times Y} \widetilde{\otimes}^\mathbb{L}_{p_1^{-1}\O_X\widetilde{\otimes}_Kp_2^{-1}\O_Y}(p_1^{-1}\mathcal{M}^\bullet\widetilde{\otimes}_K^{\mathbb{L}}p_2^{-1}\mathcal{N}^\bullet)\in \mathrm{D}(\O_{X\times Y}).
\end{equation*}

\begin{lem}
	\label{boxtimesOdiagonal}
	Let $X$ be a smooth rigid analytic $K$-variety and let $\Delta: X\to X\times X$ denote the diagonal embedding. There is a natural isomorphism
	\begin{equation*}
		\M_1^\bullet\widetilde{\otimes}_{\O_X}^{\mathbb{L}}\M_2^\bullet\cong \mathbb{L}\Delta^*(\M_1^\bullet\boxtimes \M_2^\bullet)
	\end{equation*}
	for any $\M_1^\bullet, \M_2^\bullet\in \mathrm{D}(\O_X)$.
\end{lem}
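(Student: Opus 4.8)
The plan is to avoid applying $\mathbb{L}\Delta^*$ directly to the defining formula for $\boxtimes$, and instead to recognise the exterior tensor product as an internal tensor product of two derived pullbacks. Once this is done, the statement becomes a formal consequence of the monoidality of derived inverse image together with the functoriality identity $p_i\circ\Delta=\mathrm{id}_X$, where $p_1,p_2\colon X\times X\to X$ are the projections.

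First I would rewrite the exterior tensor product. Writing $\mathbb{L}p_i^*=\O_{X\times X}\widetilde{\otimes}^{\mathbb{L}}_{p_i^{-1}\O_X}p_i^{-1}(-)$ for the derived inverse image of $\O$-modules, the associativity of the derived tensor product yields a natural isomorphism
\[
\M_1^\bullet\boxtimes \M_2^\bullet\cong \mathbb{L}p_1^*\M_1^\bullet\widetilde{\otimes}^{\mathbb{L}}_{\O_{X\times X}}\mathbb{L}p_2^*\M_2^\bullet,
\]
since the two factors $\O_{X\times X}\widetilde{\otimes}^{\mathbb{L}}_{p_i^{-1}\O_X}p_i^{-1}\M_i^\bullet$, tensored together over $\O_{X\times X}$, collapse (via the map $p_1^{-1}\O_X\widetilde{\otimes}_K p_2^{-1}\O_X\to \O_{X\times X}$) to precisely the defining formula for $\boxtimes$. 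This step is where one uses that $\widetilde{\otimes}_K$ is exact on $\mathrm{Ind}(\mathrm{Ban}_K)$, so that these iterated tensor products are computed without correction terms.

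Next I would invoke the monoidality of derived inverse image: for any morphism $f$ and $\O$-modules $\mathcal{A}^\bullet,\mathcal{B}^\bullet$, combining the exactness and strong monoidality of $f^{-1}$ with the base-change identity gives a natural isomorphism $\mathbb{L}f^*(\mathcal{A}^\bullet\widetilde{\otimes}^{\mathbb{L}}_{\O_Y}\mathcal{B}^\bullet)\cong \mathbb{L}f^*\mathcal{A}^\bullet\widetilde{\otimes}^{\mathbb{L}}_{\O_X}\mathbb{L}f^*\mathcal{B}^\bullet$. Applying this with $f=\Delta$ produces
\[
\mathbb{L}\Delta^*(\M_1^\bullet\boxtimes\M_2^\bullet)\cong \mathbb{L}\Delta^*\mathbb{L}p_1^*\M_1^\bullet\widetilde{\otimes}^{\mathbb{L}}_{\O_X}\mathbb{L}\Delta^*\mathbb{L}p_2^*\M_2^\bullet.
\]
Finally, functoriality of derived pullback gives $\mathbb{L}\Delta^*\mathbb{L}p_i^*\cong \mathbb{L}(p_i\circ\Delta)^*=\mathbb{L}(\mathrm{id}_X)^*=\mathrm{id}$, so the right-hand side is exactly $\M_1^\bullet\widetilde{\otimes}^{\mathbb{L}}_{\O_X}\M_2^\bullet$, which is the claim.

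The main obstacle is purely formal and lies in justifying the foundational manipulations in the derived bornological setting: that $\Delta^{-1}$ (and more generally $f^{-1}$) commutes with the derived tensor products $\widetilde{\otimes}^{\mathbb{L}}_K$ and $\widetilde{\otimes}^{\mathbb{L}}_{\O}$, and that the associativity and base-change isomorphisms for $\widetilde{\otimes}^{\mathbb{L}}$ hold in $\mathrm{D}(\mathrm{Shv}(-,\mathrm{Ind}(\mathrm{Ban}_K)))$. These all reduce to the exactness of the tensor product on $\mathrm{Ind}(\mathrm{Ban}_K)$ and the existence of flat resolutions; the one genuinely careful point is that $f^{-1}$ carries flat resolutions to flat resolutions, so that both sides of the monoidality isomorphism may be computed on explicit representatives, which again follows from the exactness of $f^{-1}$ and of the completed tensor product.
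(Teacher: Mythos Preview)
Your argument is correct and packages the same underlying associativity and flatness facts as the paper, but organises them differently. The paper first produces a specific natural morphism $\mathbb{L}\Delta^*(\M_1^\bullet\boxtimes\M_2^\bullet)\to \M_1^\bullet\widetilde{\otimes}^{\mathbb{L}}_{\O_X}\M_2^\bullet$ via the $(\mathbb{L}\Delta^*,\mathrm{R}\Delta_*)$-adjunction, then checks it is an isomorphism by replacing bounded-above $\M_i^\bullet$ by resolutions whose terms have the form $\O_X\widetilde{\otimes}_K\F$ with $\F$ flat in $\mathrm{Shv}(X,LH(\h{\B}c_K))$, reducing the claim to the elementary identity $(\O_X\widetilde{\otimes}_K\F_1)\widetilde{\otimes}_{\O_X}(\O_X\widetilde{\otimes}_K\F_2)\cong \O_X\widetilde{\otimes}_K(\F_1\widetilde{\otimes}_K\F_2)$; the unbounded case is handled by passing to homotopy colimits. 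You instead first identify $\M_1^\bullet\boxtimes\M_2^\bullet$ with $\mathbb{L}p_1^*\M_1^\bullet\widetilde{\otimes}^{\mathbb{L}}_{\O_{X\times X}}\mathbb{L}p_2^*\M_2^\bullet$, then invoke monoidality of $\mathbb{L}\Delta^*$ and the composition identity $p_i\circ\Delta=\mathrm{id}_X$. These two ingredients are precisely the $\O$-module analogues of what the paper proves shortly afterwards as Lemma~\ref{tensorofpullbacks} and Lemma~\ref{pullbackmonoidal} (for $f^!$ rather than $\mathbb{L}f^*$, but the proofs there are the same formal manipulations you describe), so there is no circularity. Your route is cleaner conceptually and dispenses with the explicit resolution and homotopy-colimit step; the paper's route has the small advantage of exhibiting the isomorphism as the adjoint of a concrete map, which is the form in which it is reused in the subsequent lemma on $\Delta^!$.
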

Here, as usual, we use the notation
\begin{equation*}
	\mathbb{L}\Delta^*(\M^\bullet)=\O_X\widetilde{\otimes}_{\Delta^{-1}\O_{X\times X}}^{\mathbb{L}}\Delta^{-1}\M^\bullet.
\end{equation*}
\begin{proof}
	The natural morphism
	\begin{equation*}
		\M_1^\bullet\boxtimes \M_2^\bullet\to \mathrm{R}\Delta_*(\M_1^\bullet\widetilde{\otimes}_{\O_{X\times X}}^{\mathbb{L}}\M_2^\bullet)
	\end{equation*}
	induces a natural morphism
	\begin{equation*}
		\mathbb{L}\Delta^*(\M_1^\bullet\boxtimes \M_2^\bullet)\to \M_1^\bullet\widetilde{\otimes}_{\O_X}^{\mathbb{L}}\M_2^\bullet
	\end{equation*}
	by adjunction.
	
	If $\M_1^\bullet$, $\M_2^\bullet$ are bounded above, we can find suitable resolutions, where each term is of the form $\O_X\widetilde{\otimes}_K \F$ for some flat object in $\mathrm{Shv}(X, LH(\h{\B}c_K))$ (compare \cite[Theorem 3.24]{Bode6Op}). In this case, the morphism above reduces to the isomorphism
	\begin{equation*}
		(\O_X\widetilde{\otimes}_K \F_1)\widetilde{\otimes}_{\O_X}(\O_X\widetilde{\otimes}_K \F_2)\cong \O_X\widetilde{\otimes}_K (\F_1\widetilde{\otimes}_K \F_2).
	\end{equation*}
	In the general case, passing to homotopy colimits proves the result.
\end{proof}

Note that 
\begin{equation*}
\w{\D}_{X\times Y}\cong \w{\D}_X\boxtimes \w{\D}_Y
\end{equation*}
by arguing locally and picking local coordinates. As any $\w{\D}$-module admits a resolution by flat $\w{\D}$-modules which are also flat as $\O$-modules (\cite[subsection 7.1]{Bode6Op}), it follows that $\boxtimes$ can also be viewed as a functor $\mathrm{D}(\w{\D}_X)\times \mathrm{D}(\w{\D}_Y)\to \mathrm{D}(\w{\D}_{X\times Y})$.

This functor is particularly well-behaved on coadmissible modules:

\begin{lem}
	\label{Cbox}
The functor $\boxtimes$ restricts to an exact functor $\boxtimes: \C_X\times \C_Y\to \C_{X\times Y}$. Moreover, $\boxtimes$ preserves bounded above $\C$-complexes.
\end{lem}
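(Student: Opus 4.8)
The strategy is to reduce everything to an explicit computation with the Banach algebras $\D_n$, where $\boxtimes$ becomes an ordinary completed tensor product over the base field $K$. Since coadmissibility is a local condition, I would first pass to the case where $X=\Sp A$ and $Y=\Sp B$ are affinoid with local coordinate systems, so that $X\times Y$ is affinoid with the evident coordinate system and $\w{\D}_{X\times Y}\cong \w{\D}_X\boxtimes\w{\D}_Y$ as recorded above (reconciling the $\O$-module and $\w{\D}$-module versions of $\boxtimes$). Fixing affine formal models and Lie lattices $\L_X,\L_Y$, the lattice $\L_X\oplus\L_Y$ on $X\times Y$ induces compatible sites, and, via the computation of the completed enveloping algebra of a direct sum of commuting Lie--Rinehart algebras, a factorisation $\D_{(X\times Y)_n}\cong \D_{X_n}\widetilde{\otimes}_K\D_{Y_n}$ for each $n$.

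With this in hand, the plan is to identify the level-$n$ pieces. Writing the coadmissible modules as $\M=\varprojlim M_n$ and $\N=\varprojlim N_n$ with $M_n=\D_{X_n}\widetilde{\otimes}_{\w{\D}_X}\M$ coherent and likewise for $N_n$, associativity and symmetry of $\widetilde{\otimes}_K$ give
\[
(\M\boxtimes\N)_n=\D_{(X\times Y)_n}\widetilde{\otimes}_{\w{\D}_{X\times Y}}(\M\boxtimes\N)\cong M_n\widetilde{\otimes}_K N_n
\]
as $\D_{(X\times Y)_n}$-modules. Coherence of the right-hand side is routine: elementary tensors of finite generating sets of $M_n$ and $N_n$ generate $M_n\widetilde{\otimes}_K N_n$ over the Noetherian algebra $\D_{X_n}\widetilde{\otimes}_K\D_{Y_n}$, and finite generation over a Noetherian algebra is the same as coherence. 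This verifies the first defining condition of a $\C$-complex at each level.

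For the second condition and for exactness I would argue level by level. A short exact sequence of coadmissible modules in either argument restricts to a short exact sequence of the coherent $\D_n$-level modules; applying $-\widetilde{\otimes}_K N_n$ keeps this exact because $\widetilde{\otimes}_K$ is exact over the field $K$ (there are no higher Tor-terms, so in particular $\M\boxtimes\N$ is concentrated in degree zero). Since the transition maps of a coadmissible system are such that $\varprojlim$ is exact, passing to the limit both shows that $\boxtimes$ is exact in each variable and that the natural map $\M\boxtimes\N\to\varprojlim_n(M_n\widetilde{\otimes}_K N_n)$ is an isomorphism of coadmissible modules. This last isomorphism --- the commutation of the completed tensor product with the coadmissible inverse limits --- is the genuinely analytic input and the step I expect to be the main obstacle; it rests on the pseudo-nuclearity of the Banach modules $M_n,N_n$ and the countable type of coadmissible modules, in the spirit of the nuclearity results of \cite{Bode6Op}.

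Finally, for the statement about bounded above $\C$-complexes, I would exploit that the exactness of $\widetilde{\otimes}_K$ yields a Künneth isomorphism with no correction terms: for bounded above $\C$-complexes $\M^\bullet$ and $\N^\bullet$ one has
\[
\mathrm{H}^k(\M^\bullet\boxtimes\N^\bullet)\cong\bigoplus_{i+j=k}\mathrm{H}^i(\M^\bullet)\boxtimes\mathrm{H}^j(\N^\bullet),
\]
and the direct sum is finite because both complexes are bounded above. Each summand is coadmissible by the first part, so each cohomology group of $\M^\bullet\boxtimes\N^\bullet$ is coadmissible and vanishes for $k\gg 0$; as the associated $\D_n$-level complexes $\M^\bullet_n\widetilde{\otimes}_K\N^\bullet_n$ remain bounded above with coherent cohomology, $\M^\bullet\boxtimes\N^\bullet$ is again a bounded above $\C$-complex.
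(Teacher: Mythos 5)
Your proposal follows essentially the same route as the paper: reduce to affinoids with coordinates, factor $\D_{(X\times Y)_n}\cong\D_{X_n}\widetilde{\otimes}_K\D_{Y_n}$, identify the level-$n$ pieces as $M_n\widetilde{\otimes}_KN_n$, and invoke the nuclearity/countable-type result to commute $\widetilde{\otimes}_K$ with the coadmissible inverse limits (this is exactly the paper's appeal to \cite[Corollary 5.23]{Bode6Op}); you have correctly located the one genuinely analytic input.

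Two points deserve tightening. First, your parenthetical ``there are no higher Tor-terms, so $\M\boxtimes\N$ is concentrated in degree zero'' is not a consequence of exactness of $\widetilde{\otimes}_K$ alone: the definition of $\boxtimes$ involves a derived tensor product over $p_1^{-1}\w{\D}_X\widetilde{\otimes}_Kp_2^{-1}\w{\D}_Y$ (a completed localisation from the global-sections algebra $\w{\D}_X(X)\widetilde{\otimes}_K\w{\D}_Y(Y)$ to $\w{\D}_{X\times Y}$), and the vanishing of the higher terms rests on the acyclicity of coadmissible modules for this localisation (\cite[Proposition 5.33]{Bode6Op}), which the paper cites explicitly. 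Second, your K\"unneth isomorphism as a direct sum overclaims: complexes in $LH(\h{\B}c_K)$ need not split into their cohomology, so even with an exact tensor functor one only gets a finite filtration whose graded pieces are the $\mathrm{H}^i(\M^\bullet)\boxtimes\mathrm{H}^j(\N^\bullet)$ (the paper phrases this as a successive finite extension, via \cite[tag 0132]{stacksproj}). This does not affect the conclusion, since coadmissibility is stable under extensions, but the direct sum decomposition itself is not justified.
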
 
\begin{proof}
Suppose that $X$ and $Y$ are smooth affinoids admitting local coordinates $(x_1, \dots, x_r, \partial_1, \dots, \partial_r)$ and $(y_1, \dots, y_s, \xi_1, \dots, \xi_s)$, respectively. If $M=\varprojlim M_n$ is a coadmissible $\w{\D}_X(X)$-module and $N=\varprojlim N_n$ is a coadmissible $\w{\D}_Y(Y)$-module, then by \cite[Corollary 5.23]{Bode6Op}, 
\begin{equation*}
M\widetilde{\otimes}_K N\cong \varprojlim (M_n\widetilde{\otimes}_K N_n),
\end{equation*}
which is a coadmissible $\w{\D}_{X\times Y}(X\times Y)\cong \w{\D}_X(X)\widetilde{\otimes}_K \w{\D}_Y(Y)$-module.

If $U\subseteq X$, $V\subseteq Y$ are affinoid subdomains, then 
\begin{equation*}
(\w{\D}_X(U)\widetilde{\otimes}_K \w{\D}_Y(V))\widetilde{\otimes}_{\w{\D}(X)\widetilde{\otimes}\w{\D}_Y(Y)}(M\widetilde{\otimes}_K N)\cong (\w{\D}_X(U)\widetilde{\otimes}_{\w{\D}_X(X)}M)\widetilde{\otimes}_K (\w{\D}_Y(V)\widetilde{\otimes}_{\w{\D}_Y(Y)}N).
\end{equation*} 
Thus if $\M$ resp. $\N$ is a coadmissible $\w{\D}_X$- resp. $\w{\D}_Y$-module, associated to $M$ and $N$, respectively, then
\begin{equation*}
\w{\D}_{X\times Y}\widetilde{\otimes}_{p_1^{-1}\w{\D}_X\widetilde{\otimes}_K p_2^{-1}\w{\D}_Y}(p_1^{-1}\M\widetilde{\otimes}_K p_2^{-1}\N)
\end{equation*}
is the coadmissible $\w{\D}_{X\times Y}$-modules associated to $M\widetilde{\otimes}_K N$. In particular, this functor is exact by exactness of the completed tensor product (\cite[Proposition 4.21]{Bode6Op}).

Now by associativity of tensor products and by \cite[Proposition 5.33]{Bode6Op}, 
\begin{align*}
\M\boxtimes \N&\cong \w{\D}_{X\times Y}\widetilde{\otimes}^\mathbb{L}_{\w{\D}_X(X)\widetilde{\otimes}_K \w{\D}_Y(Y)} (\M(X)\widetilde{\otimes}_K \N(Y))\\
&\cong \w{\D}_{X\times Y} \widetilde{\otimes}_{\w{\D}_X(X)\widetilde{\otimes}_K \w{\D}_Y(Y)} (\M(X)\widetilde{\otimes}_K \N(Y))\\
&\cong \w{\D}_{X\times Y}\widetilde{\otimes}_{p_1^{-1}\w{\D}_X\widetilde{\otimes}_K p_2^{-1}\w{\D}_Y}(p_1^{-1}\M\widetilde{\otimes}_K p_2^{-1}\N)
\end{align*}
is a coadmissible $\w{\D}_{X\times Y}$-module concentrated in degree zero.

If $X=\Sp A$, $Y=\Sp B$, let $\A$, $\B$ denote affine formal models, and suppose without loss of generality that the $\A$-module spanned by the $\partial_i$ is an $(R, \A)$-Lie lattice in $\T_X(X)$, which we denote by $\L_1$. Similarly, we obtain $\L_2\subseteq \T_Y(Y)$. Now note that the image $\C$ of $\A\h{\otimes}_R\B$ inside $\O_{X\times Y}(X\times Y)=A\widetilde{\otimes}_K B$ yields an admissible affine formal model for $A\widetilde{\otimes}_K B$, and the $\C$-submodule of $\T_{X\times Y} (X\times Y)$ generated by the $\partial_i$ and the $\xi_i$ is an $(R, \C)$-Lie lattice. These choices of lattices provide us with sheaves $\D_{X, n}$, $\D_{Y, n}$, $\D_{X\times Y, n}$ such that
\begin{equation*}
\D_{X\times Y, n}\cong \D_{X, n}\boxtimes \D_{Y, n}.
\end{equation*}
The same argument as above then shows that $\boxtimes$ is an exact functor on coherent $\D_n$-modules, yielding the functor 
\begin{equation*}
\boxtimes: \mathrm{D}^b_{\mathrm{coh}}(\D_{X, n})\times  \mathrm{D}^b_{\mathrm{coh}}(\D_{Y, n})\to  \mathrm{D}^b_{\mathrm{coh}}(\D_{X\times Y, n}).
\end{equation*}
Moreover,
\begin{align*}
\D_{X, n}(X)\widetilde{\otimes}_K \D_{Y, n}(Y)\widetilde{\otimes}_{\w{\D}_X(X)\widetilde{\otimes}_K \w{\D}_Y(Y)} (\M(X)\widetilde{\otimes}_K \N(Y))\\
\cong (\D_{X, n}(X)\widetilde{\otimes}_{\w{\D}_X(X)}\M(X))\widetilde{\otimes}_K(\D_{Y, n}(Y)\widetilde{\otimes}_{\w{\D}_Y(Y)}\N(Y)) 
\end{align*}
implies
\begin{equation*}
\D_{X\times Y, n}\widetilde{\otimes}_{\w{\D}_{X\times Y}} (\M\boxtimes \N)\cong \M_n \boxtimes \N_n
\end{equation*}
for any coadmissible $\w{\D}_X$-module $\M$ and coadmissible $\w{\D}_Y$-module $\N$ with $\M_n=\D_{X, n}\widetilde{\otimes}_{\w{\D}_X} \M$, similarly for $\N$.

In particular, it follows from \cite[Lemma 6.6]{Bode6Op} that
\begin{equation*}
\M\boxtimes \N\cong \varprojlim \M_n\boxtimes \N_n \cong \varprojlim \D_{X\times Y, n}\widetilde{\otimes}_{\w{\D}_{X\times Y}}(\M\boxtimes \N).
\end{equation*}
Now if $\M^\bullet\in \mathrm{D}_{\C}^{-}(\w{\D}_X)$ and $\N^\bullet\in \mathrm{D}_{\C}^{-}(\w{\D}_Y)$, then by the above and \cite[tag 0132]{stacksproj}, $\mathrm{H}^i(\M^\bullet\boxtimes \N^\bullet)$ is given as the successive (finite) extension of the coadmissible modules
\begin{equation*}
	\mathrm{H}^j(\M^\bullet)\boxtimes \mathrm{H}^{i-j}(\N^\bullet), \ j\in \mathbb{Z}.
\end{equation*}
In particular, $\mathrm{H}^i(\M^\bullet\boxtimes \N^\bullet)$ is a coadmissible $\w{\D}_{X\times Y}$-module for each $i$.

Furthermore,
\begin{equation*}
	\mathrm{H}^i(\D_{X\times Y, n}\widetilde{\otimes}_{\w{\D}_{X\times Y}}^{\mathbb{L}}\M^\bullet\boxtimes \N^\bullet)
\end{equation*}
is given as the corresponding extension of
\begin{equation*}
	\mathrm{H}^j(\D_{X, n}\widetilde{\otimes}_{\w{\D}_X}^{\mathbb{L}}\M^\bullet)\boxtimes \mathrm{H}^{i-j}(\D_{Y, n}\widetilde{\otimes}_{\w{\D}_Y}^{\mathbb{L}}\N^\bullet),
\end{equation*}
so that $\D_{X\times Y, n}\widetilde{\otimes}_{\w{\D}_{X\times Y}}^{\mathbb{L}}(\M^\bullet\boxtimes \N^\bullet)$ is bounded for each $n$. Thus $\M^\bullet\boxtimes \N^\bullet$ is a bounded above $\C$-complex by \cite[Proposition 8.5]{Bode6Op}.
\end{proof}

The same argument as above shows that if $\M^\bullet$ is a $\C$-complex on $X$ and $\N^\bullet\in \mathrm{D}^b_\C(\w{\D}_Y)$, then $\M^\bullet\boxtimes \N^\bullet\in \mathrm{D}_\C(\w{\D}_{X\times Y})$. It is however evident that $\boxtimes$ does not preserve arbitrary (unbounded) $\C$-complexes. Analogously to \cite[Remark after Theorem 6.9]{Bodegldim}, one might improve the situation in an $\infty$-categorical framework by glueing the exterior products for coherent $\D_n$-modules, but we will not investigate this further.

\begin{lem}
Let $X$ be a smooth rigid analytic $K$-variety. Let $\Delta: X\to X\times X$ denote the diagonal map. There is a natural isomorphism
\begin{equation*}
\M_1^\bullet\widetilde{\otimes}^\mathbb{L}_{\O_X} \M_2^\bullet\cong \Delta^!(\M_1^\bullet\boxtimes \M_2^\bullet)[\mathrm{dim}X]
\end{equation*}
for any $\M_1^\bullet, \M_2^\bullet\in \mathrm{D}(\w{\D}_X)$.
\end{lem}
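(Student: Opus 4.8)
The plan is to unwind $\Delta^!$, collapse it to an $\O_X$-module pullback via the transfer bimodule, and then invoke Lemma \ref{boxtimesOdiagonal}, the only genuine extra work being the comparison of the two $\w{\D}_X$-module structures. Since $\mathrm{dim}X-\mathrm{dim}(X\times X)=-\mathrm{dim}X$, the external shift $[\mathrm{dim}X]$ exactly cancels the shift built into the definition of $f^!$, so that
\begin{equation*}
\Delta^!(\M_1^\bullet\boxtimes \M_2^\bullet)[\mathrm{dim}X]=\w{\D}_{X\to X\times X}\widetilde{\otimes}^\mathbb{L}_{\Delta^{-1}\w{\D}_{X\times X}}\Delta^{-1}(\M_1^\bullet\boxtimes \M_2^\bullet).
\end{equation*}
Using the factorisation $\w{\D}_{X\to X\times X}=\O_X\widetilde{\otimes}_{\Delta^{-1}\O_{X\times X}}\Delta^{-1}\w{\D}_{X\times X}$ of the transfer bimodule together with the flatness of $\w{\D}_{X\times X}$ over $\O_{X\times X}$ (which lets us replace this non-derived tensor by a derived one), associativity of the derived completed tensor product collapses the right hand side to
\begin{equation*}
\O_X\widetilde{\otimes}^\mathbb{L}_{\Delta^{-1}\O_{X\times X}}\Delta^{-1}(\M_1^\bullet\boxtimes \M_2^\bullet)=\mathbb{L}\Delta^*(\M_1^\bullet\boxtimes \M_2^\bullet)
\end{equation*}
in $\mathrm{D}(\O_X)$, which by Lemma \ref{boxtimesOdiagonal} is naturally isomorphic to $\M_1^\bullet\widetilde{\otimes}^\mathbb{L}_{\O_X}\M_2^\bullet$ as an object of $\mathrm{D}(\O_X)$. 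As in the proof of that lemma, I would first treat the case where $\M_1^\bullet$ and $\M_2^\bullet$ are bounded above, replacing them by resolutions that are flat over both $\w{\D}_X$ and $\O_X$ so that all the tensor products above may be computed on the nose, and then pass to homotopy colimits to handle the general unbounded case.

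It then remains to check that the left $\w{\D}_X$-module structure carried by $\Delta^!(\M_1^\bullet\boxtimes \M_2^\bullet)$ via the transfer bimodule agrees, under this identification, with the natural (Leibniz) $\w{\D}_X$-module structure on $\M_1^\bullet\widetilde{\otimes}^\mathbb{L}_{\O_X}\M_2^\bullet$. This is a local question, so I would work in coordinates: suppose $X=\Sp A$ has a local coordinate system $x_1,\hdots, x_m$, $\partial_1,\hdots,\partial_m$, so that $X\times X$ carries the coordinates $x_i^{(1)}, x_i^{(2)}$ with corresponding vector fields $\partial_i^{(1)}, \partial_i^{(2)}$, and $\Delta^*x_i^{(1)}=\Delta^*x_i^{(2)}=x_i$. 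The chain-rule description of the left $\w{\D}_X$-action on $\w{\D}_{X\to X\times X}$ then sends $\partial_k$ to $1\otimes(\partial_k^{(1)}+\partial_k^{(2)})$. Since, by definition of $\boxtimes$, the vector field $\partial_k^{(1)}$ acts only on the first tensor factor of $\M_1^\bullet\boxtimes\M_2^\bullet$ and $\partial_k^{(2)}$ only on the second, the operator $\partial_k^{(1)}+\partial_k^{(2)}$ acts precisely by the Leibniz rule on $\M_1^\bullet\widetilde{\otimes}_{\O_X}\M_2^\bullet$ after the collapse, which is exactly the tensor-product $\w{\D}_X$-structure. Tracking this through the identifications above upgrades the $\O_X$-linear isomorphism to a $\w{\D}_X$-linear one, and naturality follows since every morphism in the chain is canonical.

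The main obstacle is this second step: the $\O_X$-module identification is essentially formal, being a combination of associativity of tensor products and Lemma \ref{boxtimesOdiagonal}, whereas the content of the statement lies in the bookkeeping of the bimodule structures, namely verifying that the description of the left action on the transfer bimodule of the diagonal reproduces the Leibniz rule. A secondary subtlety, as always in this bornological setting, is to ensure that the collapse of the iterated completed tensor product and the passage to homotopy colimits are compatible with the module structures; I expect this to follow from the same flatness and associativity properties used in the proof of Lemma \ref{boxtimesOdiagonal}, so that, as the introduction promises, the argument is ultimately formal.
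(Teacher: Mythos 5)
Your proposal is correct and follows essentially the same route as the paper: both identify $\Delta^!(-)[\mathrm{dim}X]$ with $\mathbb{L}\Delta^*$ on underlying complexes and then conclude by Lemma \ref{boxtimesOdiagonal}. The only difference is that the paper obtains $\w{\D}_X$-linearity of the comparison map for free by defining it via adjunction from $\M_1^\bullet\boxtimes \M_2^\bullet\to \mathrm{R}\Delta_*(\M_1^\bullet\widetilde{\otimes}^\mathbb{L}_{\O_X}\M_2^\bullet)$ and checking the isomorphism after forgetting to $\O_X$-modules, whereas you verify the Leibniz-rule compatibility by hand in coordinates — a harmless but avoidable extra step.
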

\begin{proof}
We have a natural morphism 
\begin{equation*}
\Delta^!(\M_1^\bullet\boxtimes \M_2^\bullet)[\mathrm{dim}X]\to \M_1^\bullet\widetilde{\otimes}^\mathbb{L}_{\O_X} \M_2^\bullet
\end{equation*}
induced by the map
\begin{equation*}
\M_1^\bullet\boxtimes \M_2^\bullet\to \mathrm{R}\Delta_*(\M_1^\bullet\widetilde{\otimes}^\mathbb{L}_{\O_X} \M_2^\bullet) 
\end{equation*}
via adjunction.

This is an isomorphism by Lemma \ref{boxtimesOdiagonal}.
\end{proof}

\begin{lem}
	\label{tensorofpullbacks}
There is a natural isomorphism
\begin{equation*}
\M_1^\bullet\boxtimes \M_2^\bullet\cong p_1^!\M_1^\bullet\widetilde{\otimes}^\mathbb{L}_{\O_{X_1\times X_2}}p_2^!\M_2^\bullet[-\mathrm{dim}X_1-\mathrm{dim}X_2].
\end{equation*}
for $\M_1^\bullet\in \mathrm{D}(\w{\D}_{X_1})$, $\M_2^\bullet\in \mathrm{D}(\w{\D}_{X_2})$.
\end{lem}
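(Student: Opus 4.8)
The plan is to strip both sides down to their underlying $\O_{X_1\times X_2}$-module complexes, where the claim becomes an instance of associativity of the completed tensor product, and then to reinstate the $\w{\D}$-linearity by a local computation. First I would recall that for a projection the transfer bimodule is just the $\O$-module pullback of differential operators, $\w{\D}_{X_1\times X_2\to X_i}=p_i^*\w{\D}_{X_i}=\O_{X_1\times X_2}\widetilde{\otimes}_{p_i^{-1}\O_{X_i}}p_i^{-1}\w{\D}_{X_i}$. Substituting this into the definition of $p_i^!$ and cancelling the inner $p_i^{-1}\w{\D}_{X_i}$-factor, the underlying $\O_{X_1\times X_2}$-complex of $p_i^!\M_i^\bullet$ is $\O_{X_1\times X_2}\widetilde{\otimes}^\mathbb{L}_{p_i^{-1}\O_{X_i}}p_i^{-1}\M_i^\bullet$, shifted by $[\mathrm{dim}X_2]$ for $i=1$ and by $[\mathrm{dim}X_1]$ for $i=2$. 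These two shifts add up to $[\mathrm{dim}X_1+\mathrm{dim}X_2]$ in the $\O$-module tensor product and are exactly undone by the shift in the statement, so on underlying $\O_{X_1\times X_2}$-complexes it suffices to produce a natural isomorphism
\begin{equation*}
\M_1^\bullet\boxtimes\M_2^\bullet\cong\left(\O_{X_1\times X_2}\widetilde{\otimes}^\mathbb{L}_{p_1^{-1}\O_{X_1}}p_1^{-1}\M_1^\bullet\right)\widetilde{\otimes}^\mathbb{L}_{\O_{X_1\times X_2}}\left(\O_{X_1\times X_2}\widetilde{\otimes}^\mathbb{L}_{p_2^{-1}\O_{X_2}}p_2^{-1}\M_2^\bullet\right).
\end{equation*}

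Writing $A=p_1^{-1}\O_{X_1}$ and $B=p_2^{-1}\O_{X_2}$ and viewing $\O_{X_1\times X_2}$ as a sheaf of $A\widetilde{\otimes}_K B$-algebras, the right-hand side rearranges by associativity into $\O_{X_1\times X_2}\widetilde{\otimes}^\mathbb{L}_{A\widetilde{\otimes}_K B}(p_1^{-1}\M_1^\bullet\widetilde{\otimes}^\mathbb{L}_K p_2^{-1}\M_2^\bullet)$: one first rewrites $\O_{X_1\times X_2}\widetilde{\otimes}^\mathbb{L}_A p_1^{-1}\M_1^\bullet\cong\O_{X_1\times X_2}\widetilde{\otimes}^\mathbb{L}_{A\widetilde{\otimes}_K B}(p_1^{-1}\M_1^\bullet\widetilde{\otimes}_K B)$, and then tensoring over $\O_{X_1\times X_2}$ with the second factor replaces $\widetilde{\otimes}_{\O_{X_1\times X_2}}(\O_{X_1\times X_2}\widetilde{\otimes}_B-)$ by $\widetilde{\otimes}_B-$; this is exactly the defining formula for $\M_1^\bullet\boxtimes\M_2^\bullet$. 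To make these manipulations with completed tensor products rigorous I would, as in the proof of Lemma \ref{boxtimesOdiagonal}, pass to $\O$-flat resolutions whose terms are of the form $\O\widetilde{\otimes}_K\F$, reducing everything to the exact closed symmetric monoidal structure on $\mathrm{Ind}(\mathrm{Ban}_K)$ (via \cite[Proposition 4.21]{Bode6Op}); the associativity and base-change identities above are then formal.

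It remains to check that this $\O_{X_1\times X_2}$-linear isomorphism is $\w{\D}_{X_1\times X_2}$-linear. Both sides carry their module structure through the identification $\w{\D}_{X_1\times X_2}\cong\w{\D}_{X_1}\boxtimes\w{\D}_{X_2}$: on the exterior product the $X_1$- and $X_2$-derivatives act through $\M_1^\bullet$ and $\M_2^\bullet$ respectively, whereas on $p_1^!\M_1^\bullet\widetilde{\otimes}^\mathbb{L}_{\O_{X_1\times X_2}}p_2^!\M_2^\bullet$ they act by the Leibniz rule, with $p_1^!\M_1^\bullet$ carrying the action of $\M_1^\bullet$ in the $X_1$-directions and the canonical relative (de Rham) connection in the $X_2$-directions, and symmetrically for $p_2^!\M_2^\bullet$. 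I expect the main obstacle to be precisely this last point: one has to verify in local coordinates $x_1,\dots,x_r,y_1,\dots,y_s$ that, after the base-change identification, the cross terms produced by the Leibniz rule match the honest action on the exterior product, so that the $\O$-linear comparison is promoted to a $\w{\D}_{X_1\times X_2}$-linear isomorphism. The degree-shift bookkeeping and the associativity of the completed tensor product are by contrast entirely formal.
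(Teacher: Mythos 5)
Your proposal is correct and takes essentially the same approach as the paper: the paper's own proof is literally ``This is immediate from the definition,'' and your computation (substituting $\w{\D}_{X_1\times X_2\to X_i}=p_i^*\w{\D}_{X_i}$, cancelling the $p_i^{-1}\w{\D}_{X_i}$ factors, matching the shifts $[\mathrm{dim}X_2]+[\mathrm{dim}X_1]$ against $[-\mathrm{dim}X_1-\mathrm{dim}X_2]$, and invoking associativity of the completed tensor product) is exactly that unwinding made explicit. The $\w{\D}$-linearity check you flag at the end is likewise routine given how the paper already equips $\M_1^\bullet\boxtimes\M_2^\bullet$ with its $\w{\D}_{X_1\times X_2}\cong\w{\D}_{X_1}\boxtimes\w{\D}_{X_2}$-module structure via $\O$-flat resolutions.
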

\begin{proof}
This is immediate from the definition.
\end{proof}

\begin{cor}
	\label{boxwithO}
	There is a natural isomorphism
	\begin{equation*}
		\O_{X_1}\boxtimes \M^\bullet\cong p_2^!\M^\bullet[-\mathrm{dim}X_2]
	\end{equation*}
	for $\M^\bullet\in \mathrm{D}(\w{\D}_{X_2})$.
\end{cor}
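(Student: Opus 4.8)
The plan is to deduce this straight from Lemma \ref{tensorofpullbacks} by specialising one factor to the trivial connection. Taking $\M_1^\bullet=\O_{X_1}$ (with its canonical left $\w{\D}_{X_1}$-module structure) in that lemma gives
\begin{equation*}
\O_{X_1}\boxtimes \M^\bullet\cong p_1^!\O_{X_1}\widetilde{\otimes}^\mathbb{L}_{\O_{X_1\times X_2}}p_2^!\M^\bullet[-\mathrm{dim}X_1-\mathrm{dim}X_2],
\end{equation*}
so everything reduces to identifying the single object $p_1^!\O_{X_1}$. Concretely, the key step is to show that $p_1^!$ sends the trivial connection to the trivial connection up to the expected shift, i.e. $p_1^!\O_{X_1}\cong \O_{X_1\times X_2}[\mathrm{dim}X_2]$.

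To establish this I would unwind the definition of $f^!$ for $f=p_1$, using $\w{\D}_{X_1\times X_2\to X_1}=\O_{X_1\times X_2}\widetilde{\otimes}_{p_1^{-1}\O_{X_1}}p_1^{-1}\w{\D}_{X_1}$. The transfer bimodule then cancels the $\w{\D}_{X_1}$ factor against $p_1^{-1}\O_{X_1}$, leaving $\O_{X_1\times X_2}$ on the level of underived tensor products, while the shift $\mathrm{dim}(X_1\times X_2)-\mathrm{dim}X_1=\mathrm{dim}X_2$ built into the definition of $p_1^!$ supplies the displacement. The one point requiring care — and the main (if minor) obstacle — is that $p_1^!$ is a \emph{derived} tensor product, so I must check that no higher Tor-terms intervene and the underived computation is the correct one. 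This follows from smoothness of $p_1$: since $\O_{X_1\times X_2}$ is flat over $p_1^{-1}\O_{X_1}$, the bimodule $\w{\D}_{X_1\times X_2\to X_1}$ is flat over $p_1^{-1}\w{\D}_{X_1}$, so one may compute $p_1^!\O_{X_1}$ via a flat $\w{\D}_{X_1}$-resolution of $\O_{X_1}$ (e.g. the Spencer/Koszul resolution) and verify term by term that applying $\w{\D}_{X_1\times X_2\to X_1}\widetilde{\otimes}_{p_1^{-1}\w{\D}_{X_1}}p_1^{-1}(-)$ recovers the analogous resolution of $\O_{X_1\times X_2}$ with no extra cohomology.

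Finally I would substitute $p_1^!\O_{X_1}\cong \O_{X_1\times X_2}[\mathrm{dim}X_2]$ into the displayed isomorphism. Since $\O_{X_1\times X_2}\widetilde{\otimes}^\mathbb{L}_{\O_{X_1\times X_2}}(-)$ is the identity functor, the tensor factor disappears and collecting the remaining degree shifts yields the asserted isomorphism with $p_2^!\M^\bullet$. No non-formal input beyond Lemma \ref{tensorofpullbacks} and the flatness of the transfer bimodule is needed, which is why this is phrased as a corollary.
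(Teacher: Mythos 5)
Your strategy is exactly the intended one: the paper offers no separate argument for this corollary, treating it as the specialisation $\M_1^\bullet=\O_{X_1}$ of Lemma \ref{tensorofpullbacks} combined with the identification $p_1^!\O_{X_1}\cong\O_{X_1\times X_2}[\mathrm{dim}X_2]$, and your justification of that identification (flatness of $\w{\D}_{X_1\times X_2\to X_1}$ over $p_1^{-1}\w{\D}_{X_1}$, so that the underived computation suffices) is sound. The one place you go astray is the final bookkeeping, which you assert rather than carry out: substituting $p_1^!\O_{X_1}\cong\O_{X_1\times X_2}[\mathrm{dim}X_2]$ into
\begin{equation*}
\O_{X_1}\boxtimes\M^\bullet\cong p_1^!\O_{X_1}\widetilde{\otimes}^\mathbb{L}_{\O_{X_1\times X_2}}p_2^!\M^\bullet[-\mathrm{dim}X_1-\mathrm{dim}X_2]
\end{equation*}
yields $p_2^!\M^\bullet[\mathrm{dim}X_2-\mathrm{dim}X_1-\mathrm{dim}X_2]=p_2^!\M^\bullet[-\mathrm{dim}X_1]$, not $p_2^!\M^\bullet[-\mathrm{dim}X_2]$. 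The same answer falls out of the definition of $\boxtimes$ directly: $\O_{X_1}\boxtimes\M^\bullet\cong\mathbb{L}p_2^*\M^\bullet$, while $p_2^!\M^\bullet=\mathbb{L}p_2^*\M^\bullet[\mathrm{dim}X_1]$. So the correct shift is by the dimension of the factor carrying $\O$, and the corollary as printed appears to have the indices swapped --- this reading is confirmed by how it is invoked in the proof of Theorem \ref{basechange}, where $\M^\bullet\boxtimes\O_Z\cong\widetilde{p}^{\,!}\M^\bullet[-\mathrm{dim}Z]$ with $\widetilde{p}$ the projection onto the factor carrying $\M^\bullet$. Your proof is correct in substance and method, but claiming that the shifts ``collect'' to the stated formula without checking is precisely the step at which you should have discovered that the target formula itself is off by $\mathrm{dim}X_2-\mathrm{dim}X_1$.
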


\begin{prop}
\label{boxdirect}
Let $f: X_1\to Y$ be a projective morphism of smooth rigid analytic $K$-varieties, and let $X_2$ be another smooth rigid analytic $K$-variety. If $\M^\bullet\in \mathrm{D}_\C^-(\w{\D}_{X_1})$, and $\N^\bullet\in \mathrm{D}_\C^-(\w{\D}_{X_2})$, then there is a natural isomorphism
\begin{equation*}
 (f_+\M^\bullet)\boxtimes \N^\bullet\cong (f\times \mathrm{id})_+(\M^\bullet\boxtimes \N^\bullet).
\end{equation*}
The same holds for $\M^\bullet\in \mathrm{D}_{\C}(\w{\D}_{X_1})$ if $\N^\bullet\in \mathrm{D}_{\C}^b(\w{\D}_{X_2})$.
\end{prop}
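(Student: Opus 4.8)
The plan is to reduce everything to the two relations established above — Lemma~\ref{tensorofpullbacks}, which expresses $\boxtimes$ as a tensor product of $!$-pullbacks, and the composition rule for $!$-pullbacks (Proposition~\ref{compandadj}.(i)) — and then to move $f_+$ past these using a projection formula together with base change across the relevant Cartesian square. Write $g=f\times\mathrm{id}\colon X_1\times X_2\to Y\times X_2$, let $p_1,p_2$ be the projections from $X_1\times X_2$ and $q_1,q_2$ those from $Y\times X_2$. Then $q_1\circ g=f\circ p_1$ exhibits $X_1\times X_2\cong X_1\times_Y(Y\times X_2)$ as a Cartesian square with smooth vertical maps $p_1,q_1$, while $q_2\circ g=p_2$.

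First I would rewrite the left-hand side using Lemma~\ref{tensorofpullbacks}, as $\M^\bullet\boxtimes\N^\bullet\cong p_1^!\M^\bullet\widetilde{\otimes}^{\mathbb{L}}_{\O_{X_1\times X_2}}p_2^!\N^\bullet[-\mathrm{dim}X_1-\mathrm{dim}X_2]$, and then use $p_2^!=(q_2g)^!\cong g^!q_2^!$ from Proposition~\ref{compandadj}.(i) to present the second factor as $g^!q_2^!\N^\bullet$. Applying $g_+$ and invoking the projection formula for the projective morphism $g$ (the formula used in the proof of Proposition~\ref{triangle}.(ii), now for general projective $g$, whose degree shift equals $\mathrm{dim}X_1-\mathrm{dim}Y$ since the underlying $\O$-complex of $g^!(-)$ is $\mathbb{L}g^*(-)[\mathrm{dim}X_1-\mathrm{dim}Y]$) pulls the $g^!$-factor outside:
\[
g_+(\M^\bullet\boxtimes\N^\bullet)\cong \bigl(g_+p_1^!\M^\bullet\bigr)\widetilde{\otimes}^{\mathbb{L}}_{\O_{Y\times X_2}}q_2^!\N^\bullet\,[-\mathrm{dim}Y-\mathrm{dim}X_2].
\]
It then remains to identify $g_+p_1^!\M^\bullet$ with $q_1^!f_+\M^\bullet$, after which Lemma~\ref{tensorofpullbacks} applied on $Y\times X_2$ reads the right-hand side off as $(f_+\M^\bullet)\boxtimes\N^\bullet$; the shifts $-\mathrm{dim}Y-\mathrm{dim}X_2$ match exactly.

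The identification $g_+p_1^!\cong q_1^!f_+$ is the base change isomorphism across the Cartesian square, and this is the one genuinely non-formal input, where I expect the main difficulty to lie. Since $p_1,q_1$ are smooth projections and $f$ (hence $g$) is projective, I would prove it by the usual reduction to the sheaves $\D_n$, where $g_+$ is computed by ordinary projective coherent pushforward and the statement becomes classical flat base change for the projection $q_1$; here one uses that for projective $f$ the functor $f_+$ commutes with the base change $-\widetilde{\otimes}^{\mathbb{L}}_{\w{\D}}\D_n$, as in the proof of Theorem~\ref{Ccomplexprops}.(iii) (alternatively, one cites the corresponding base change result for $\C$-complexes from \cite{Bode6Op}). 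Throughout, the boundedness hypotheses are used precisely to keep every intermediate object in $\mathrm{D}_\C$: by Lemma~\ref{Cbox} and the remark following it, $\M^\bullet\boxtimes\N^\bullet$ and $(f_+\M^\bullet)\boxtimes\N^\bullet$ are $\C$-complexes, and $f_+$ preserves $\C$-complexes by Theorem~\ref{Ccomplexprops}.(iii), so the chain assembled above is a morphism of $\C$-complexes which may be verified to be an isomorphism.

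For the second assertion, where $\M^\bullet\in\mathrm{D}_\C(\w{\D}_{X_1})$ is possibly unbounded but $\N^\bullet\in\mathrm{D}^b_\C(\w{\D}_{X_2})$, the same chain of isomorphisms applies verbatim once each term is known to be a $\C$-complex, which is guaranteed by the remark after Lemma~\ref{Cbox} (the exterior product of an arbitrary $\C$-complex with a bounded one is again a $\C$-complex). If one prefers to sidestep convergence, I would run the bounded case on the truncations $\tau^{\geq -n}\M^\bullet$ and then pass to the homotopy limit, using that $g_+$, $q_1^!$, and $\boxtimes$ against the fixed bounded $\N^\bullet$ all commute with the relevant limits of $\C$-complexes.
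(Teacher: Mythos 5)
Your reduction is formally appealing, but it routes the entire content of the proposition through two ingredients that are not available at this point in the paper, and one of them creates a circularity risk. The identification $g_+p_1^!\M^\bullet\cong q_1^!f_+\M^\bullet$ for the square formed by $f$ and the projections is exactly (the projection part of) Theorem \ref{basechange}, and the paper proves Theorem \ref{basechange} \emph{using} Proposition \ref{boxdirect} together with Corollary \ref{boxwithO}: the first step of that proof is precisely $(f\times\mathrm{id})_+(\M^\bullet\boxtimes\O_Z)\cong f_+\M^\bullet\boxtimes\O_Z$. So you cannot cite base change here. You do acknowledge this and propose to prove it directly by descending to the sheaves $\D_n$ and invoking ``classical flat base change for the projection $q_1$'', but in the bornological setting that statement is not classical: it requires showing that the \v{C}ech/Koszul--Spencer complexes computing $\mathrm{R}g_*$ commute with the completed tensor product $-\widetilde{\otimes}_K\N(X_2)$, which rests on nuclearity and countable-type arguments (the role of \cite[Corollary 5.23]{Bode6Op} in the paper's proof). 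In other words, the step you defer is the whole proof; the paper's argument for the $\mathbb{P}^d$-projection case is exactly this computation.

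The second gap is the projection formula $g_+(\mathcal{A}^\bullet\widetilde{\otimes}^{\mathbb{L}}_{\O}g^!\mathcal{B}^\bullet)\cong g_+\mathcal{A}^\bullet\widetilde{\otimes}^{\mathbb{L}}_{\O}\mathcal{B}^\bullet$ for projective $g$ and $\mathcal{B}^\bullet$ an arbitrary (bounded above) $\C$-complex. The formula you point to from the proof of Proposition \ref{triangle}.(ii) is of a different nature: there the tensor factor is $i_*\O_Z$, which is locally resolved by a finite Koszul complex, so the commutation with $\mathrm{R}j_*$ is a local, perfect-complex argument. Here $q_2^!\N^\bullet$ is not perfect over $\O_{Y\times X_2}$, and commuting it past $\mathrm{R}g_*$ again requires the completed tensor product to pass through the derived limits computing the direct image --- the same analytic input as above. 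By contrast, the paper's proof avoids both issues by factoring $f$ into a closed embedding and a projection $\mathbb{P}^{d,\mathrm{an}}_K\times Y\to Y$ via Proposition \ref{compandadj}, reducing to single coadmissible modules (exactness in the closed case, finite cohomological dimension of $\w{\D}_{Y\leftarrow X_1}\widetilde{\otimes}_{\w{\D}_{X_1}}-$ in the projection case), and then computing global sections explicitly. Your outline could in principle be completed, but as written the two load-bearing steps are unproved, and the first cannot be discharged by citation without circularity.
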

\begin{proof}
The universal property of tensor products yields a natural morphism
\begin{equation*}
	(f_+\M^\bullet)\boxtimes \N^\bullet\to (f\times \mathrm{id})_+(\M^\bullet\boxtimes \N^\bullet).
\end{equation*}	
 
Note that since closed embeddings preserve $\C$-complexes, we can use the composition results in Proposition \ref{compandadj} to restrict to the cases where $f$ is either a closed embedding or a projection $\mathbb{P}^{d, \mathrm{an}}_K\times Y\to Y$. 

The case of a closed embedding is straightforward. As all functors involved are exact on coadmissible modules in this case, we can reduce to the case where $\M^\bullet$ and $\N^\bullet$ are concentrated in degree $0$. Locally, we can assume that $Y$ admits a local coordinate system $(x_1, \dots, x_m, \partial_1, \dots, \partial_m)$ such that $X_1$ is given by the vanishing of $x_{s+1}, \dots, x_m$ for some $s<r$. Assuming also that $X_2$ is affinoid, $\M^\bullet$ can be represented by a coadmissible $\w{\D}_{X_1}$-module, which is the localisation of a coadmissible $\w{\D}_{X_1}(X_1)$-module $M$, and likewise $\N^\bullet$ is represented by a coadmissible $\w{\D}_{X_2}$-module, the localisation of a coadmissible $\w{\D}_{X_2}(X_2)$-module $N$. The natural morphism on global sections reads
\begin{align*}
\left(\frac{\w{\D}_{Y}(Y)}{\w{\D}_Y(Y)\cdot(x_{s+1}, \dots, x_m)}\widetilde{\otimes}_{\w{\D}_{X_1}(X_1)}M\right)\widetilde{\otimes}_K N\\\cong K\{\partial_{s+1}, \dots, \partial_m\}\widetilde{\otimes}_K M\widetilde{\otimes}_K N\\
\cong\frac{\w{\D}_{Y\times X_2}(Y\times X_2)}{\w{\D}_{Y\times X_2}(Y\times X_2)(x_{s+1}, \dots, x_m)}\widetilde{\otimes}_{\w{\D}_{X_1\times X_2}(X_1\times X_2)}(M\widetilde{\otimes}_K N), 
\end{align*}
as required.

Now consider the case where $f: X_1=\mathbb{P}^{d, \mathrm{an}}_K\times Y\to Y$ is the natural projection. Arguing locally, we can assume that $Y$ and $X_2$ are D-regular affinoids.

As $\w{\D}_{Y\leftarrow X_1}\widetilde{\otimes}_{\w{\D}_{X_1}}-$ has finite cohomological dimension on coadmissible modules, likewise $\w{\D}_{Y\times X_2\leftarrow X_1\times X_2}\widetilde{\otimes}_{\w{\D}_{X_1\times X_2}}-$ by the side-changed version of \cite[Lemma A.9]{Bodegldim}, we can further reduce to the case where $\M^\bullet=\M$ is a coadmissible $\w{\D}_{X_1}$-module and $\N^\bullet=\N$ is a coadmissible $\w{\D}_{X_2}$-module.

Now
\begin{equation*}
	\mathrm{H}^j(f_+\M\boxtimes \N)\cong \mathrm{H}^j(f_+\M)\boxtimes \N
\end{equation*}
is the coadmissible $\w{\D}_{Y\times X_2}$-module associated to the global sections
\begin{equation*}
	\mathrm{H}^j(f_+\M)(Y)\widetilde{\otimes}_K \N(X_2).
\end{equation*}

By the same argument as in \cite[subsection A.2]{Bodegldim}, $\mathrm{H}^j(f_+\M)(Y)$ can be calculated as follows: the transfer bimodule $\w{\D}_{Y\leftarrow X_1}$ admits a Koszul--Spencer resolution
\begin{equation*}
	\mathcal{S}^\bullet=(\Omega^\bullet_{\mathbb{P}^{d, \mathrm{an}}_K}\widetilde{\otimes}_{\O_{\mathbb{P}^{d, \mathrm{an}}_K}}\w{\D}_{\mathbb{P}^{d, \mathrm{an}}_K})\boxtimes \w{\D}_Y
\end{equation*}
so that
\begin{equation*}
	\mathrm{H}^j(f_+(\M))(Y)=\mathrm{H}^j(\mathrm{Tot}(\check{C}^{\bullet}(\mathfrak{U}\times Y, \mathcal{S}^\bullet\widetilde{\otimes}_{\w{\D}_{\mathbb{P}^{d, \mathrm{an}}_K\times Y}}\M))
\end{equation*}
for any finite affinoid covering $\mathfrak{U}$ of $\mathbb{P}^{d, \mathrm{an}}$.

But now in the same way, sections of $\mathrm{H}^j((f\times \mathrm{id})_+(\M\boxtimes \N))$ are calculated by the corresponding Cech complex relative to a covering $\mathfrak{U}\times Y\times X_2$, which is obtained from the one above by tensoring with $\N(X_2)$ over $K$. We thus obtain the desired isomorphism
\begin{equation*}
	\mathrm{H}^j((f\times\mathrm{id})_+(\M\boxtimes \N))(Y\times X_2)\cong \mathrm{H}^j(f_+\M)(Y)\widetilde{\otimes}_K \N(X_2),
\end{equation*}
as required.
\end{proof}

We remark that the same argument applies much more generally for managable $f$ if we restrict to $\C$-complexes $\M^\bullet$ such that $f_+\M^\bullet\in \mathrm{D}_\C(\w{\D}_Y)$ and $f_+\mathrm{H}^j(\M^\bullet)\in \mathrm{D}_\C(\w{\D}_Y)$ for each $j$ -- the latter is necessary so that we can reduce to the case of a single module as above.

\begin{lem}
	\label{pullbackmonoidal}
Let $f: X\to Y$ be a morphism of smooth rigid analytic $K$-varieties. Then there is a natural isomorphism
\begin{equation*}
f^!(\M^\bullet\widetilde{\otimes}^\mathbb{L}_{\O_Y} \N^\bullet)\cong f^!\M^\bullet\widetilde{\otimes}^\mathbb{L}_{\O_X} f^!\N^\bullet[\mathrm{dim}Y-\mathrm{dim}X]
\end{equation*}
for any $\M^\bullet, \N^\bullet\in \mathrm{D}(\w{\D}_Y)$.
\end{lem}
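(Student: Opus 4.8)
The plan is to reduce the statement to a Künneth-type formula for the exterior product $\boxtimes$, exploiting the diagonal. Write $\Delta_X\colon X\to X\times X$ and $\Delta_Y\colon Y\to Y\times Y$ for the diagonal embeddings, and note the identity of morphisms $\Delta_Y\circ f=(f\times f)\circ\Delta_X$ from $X$ to $Y\times Y$. Using the diagonal formula $\M_1^\bullet\widetilde{\otimes}^\mathbb{L}_{\O_Y}\M_2^\bullet\cong\Delta_Y^!(\M_1^\bullet\boxtimes\M_2^\bullet)[\mathrm{dim}Y]$ established above, together with the functoriality of the extraordinary inverse image (Proposition \ref{compandadj}.(i)), I would first compute
\begin{align*}
f^!(\M^\bullet\widetilde{\otimes}^\mathbb{L}_{\O_Y}\N^\bullet)&\cong f^!\Delta_Y^!(\M^\bullet\boxtimes\N^\bullet)[\mathrm{dim}Y]\\
&\cong \Delta_X^!(f\times f)^!(\M^\bullet\boxtimes\N^\bullet)[\mathrm{dim}Y].
\end{align*}

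The essential intermediate step is a Künneth isomorphism $(f\times f)^!(\M^\bullet\boxtimes\N^\bullet)\cong f^!\M^\bullet\boxtimes f^!\N^\bullet$, with no additional shift. I would prove this more generally for a pair of morphisms $f\colon X\to Y$, $g\colon X'\to Y'$ by unwinding the definitions: the transfer bimodule factors as $\w{\D}_{X\times X'\to Y\times Y'}\cong\w{\D}_{X\to Y}\boxtimes\w{\D}_{X'\to Y'}$, compatibly with the identification $\w{\D}_{X\times X'}\cong\w{\D}_X\boxtimes\w{\D}_{X'}$, while $(f\times g)^{-1}$ and the relevant derived tensor products are compatible with $\boxtimes$. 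A dimension count then pins the shift at zero: $(f\times g)^!$ contributes $[\mathrm{dim}(X\times X')-\mathrm{dim}(Y\times Y')]$, which agrees exactly with the total shift obtained by forming the exterior product of the shifted complexes $f^!\M^\bullet$ and $g^!\N^\bullet$. This verification can be performed locally in coordinates, in the spirit of the identification $\w{\D}_{X\times Y}\cong\w{\D}_X\boxtimes\w{\D}_Y$ and of Lemma \ref{tensorofpullbacks}; crucially, every operation in sight is manifestly $\w{\D}$-linear, so the resulting isomorphism is one of $\w{\D}$-modules and not merely of $\O$-modules.

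Feeding the Künneth isomorphism into the computation of the first paragraph and applying the diagonal formula on $X$ in the form $\Delta_X^!(\A^\bullet\boxtimes\B^\bullet)\cong\A^\bullet\widetilde{\otimes}^\mathbb{L}_{\O_X}\B^\bullet[-\mathrm{dim}X]$ (with $\A^\bullet=f^!\M^\bullet$, $\B^\bullet=f^!\N^\bullet$), I obtain
\begin{equation*}
f^!(\M^\bullet\widetilde{\otimes}^\mathbb{L}_{\O_Y}\N^\bullet)\cong\Delta_X^!(f^!\M^\bullet\boxtimes f^!\N^\bullet)[\mathrm{dim}Y]\cong f^!\M^\bullet\widetilde{\otimes}^\mathbb{L}_{\O_X}f^!\N^\bullet[\mathrm{dim}Y-\mathrm{dim}X],
\end{equation*}
which is exactly the claimed formula. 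Naturality is inherited from the naturality of each constituent isomorphism, and since the diagonal formula and Lemma \ref{tensorofpullbacks} hold on the whole of $\mathrm{D}(\w{\D})$, no boundedness hypotheses on $\M^\bullet$ and $\N^\bullet$ are needed.

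I expect the main obstacle to be the Künneth step: establishing the factorization of the transfer bimodule and the commutation of $\boxtimes$ with $(f\times g)^{-1}$ and with the derived tensor products as $\w{\D}$-linear, natural isomorphisms on the unbounded derived category, rather than only after forgetting to $\O$-modules or restricting to bounded-above complexes. (At the level of underlying $\O$-modules the statement is just the monoidality of $\mathbb{L}\Delta^*$, i.e. Lemma \ref{boxtimesOdiagonal}; the genuine content is tracking the $\w{\D}$-structure through the diagonal.) Once the unshifted Künneth formula is available, the shift bookkeeping and the remaining diagram chase are routine.
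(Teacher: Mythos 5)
Your argument is correct in outline, but it takes a genuinely different and considerably more roundabout route than the paper. The paper's proof is a one-liner: since $f^!$ is, at the level of underlying complexes, just $\mathbb{L}f^*=\O_X\widetilde{\otimes}^\mathbb{L}_{f^{-1}\O_Y}f^{-1}(-)$ up to the shift $[\mathrm{dim}X-\mathrm{dim}Y]$, the statement reduces immediately to the standard monoidality isomorphism
\begin{equation*}
\O_X\widetilde{\otimes}^\mathbb{L}_{f^{-1}\O_Y} f^{-1}(\M^\bullet\widetilde{\otimes}^\mathbb{L}_{\O_Y} \N^\bullet)\cong (\O_X\widetilde{\otimes}^\mathbb{L}_{f^{-1}\O_Y} f^{-1}\M^\bullet)\widetilde{\otimes}^\mathbb{L}_{\O_X}(\O_X\widetilde{\otimes}^\mathbb{L}_{f^{-1}\O_Y} f^{-1} \N^\bullet),
\end{equation*}
with the two shifts of $[\mathrm{dim}X-\mathrm{dim}Y]$ on the right versus one on the left accounting for the $[\mathrm{dim}Y-\mathrm{dim}X]$ in the statement. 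You instead route through the diagonal and the exterior product, which forces you to first establish the K\"unneth isomorphism $(f\times g)^!(\M^\bullet\boxtimes\N^\bullet)\cong f^!\M^\bullet\boxtimes g^!\N^\bullet$. Two cautions here. First, in the paper the logical order is the reverse of yours: that K\"unneth statement is the Proposition proved immediately \emph{after} this lemma, and its proof there uses the lemma; so you cannot simply cite it, and you are right to insist on an independent proof by unwinding transfer bimodules. Second, and more to the point, any such unwinding will at some stage invoke precisely the compatibility of $\O$-module pullback with derived tensor products displayed above -- that is, the hard kernel of your K\"unneth step already contains the entirety of the paper's direct argument, wrapped in extra bookkeeping about $\w{\D}_{X\times X'\to Y\times Y'}$ and the commutation of $\boxtimes$ with $(f\times g)^{-1}$ on unbounded complexes. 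Your shift accounting and the use of the diagonal formula $\M_1^\bullet\widetilde{\otimes}^\mathbb{L}_{\O_X}\M_2^\bullet\cong\Delta^!(\M_1^\bullet\boxtimes\M_2^\bullet)[\mathrm{dim}X]$ (which does precede this lemma in the paper, so is safe to use) are all correct, and the approach would go through once the K\"unneth step is written out in full; it just replaces a one-line verification by a longer detour whose crux is the same identity.
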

\begin{proof}
The natural morphism is an isomorphism, as
\begin{equation*}
\O_X\widetilde{\otimes}^\mathbb{L}_{f^{-1}\O_Y} f^{-1}(\M^\bullet\widetilde{\otimes}^\mathbb{L}_{\O_Y} \N^\bullet)\cong (\O_X\widetilde{\otimes}^\mathbb{L}_{f^{-1}\O_Y} f^{-1}\M^\bullet)\widetilde{\otimes}^\mathbb{L}_{\O_X}(\O_X\widetilde{\otimes}^\mathbb{L}_{f^{-1}\O_Y} f^{-1} \N^\bullet).
\end{equation*}
\end{proof}

\begin{prop}
Let $f:X_1\to Y_1$, $g: X_2\to Y_2$ be morphisms of smooth rigid analytic $K$-varieties.  If $\M^\bullet\in \mathrm{D}(\w{\D}_{Y_1})$, $\N^\bullet\in \mathrm{D}(\w{\D}_{Y_2})$, then there is a natural isomorphism
\begin{equation*}
(f\times g)^!(\M^\bullet\boxtimes \N^\bullet)\cong (f^!\M^\bullet)\boxtimes (g^!\N^\bullet).
\end{equation*}
\end{prop}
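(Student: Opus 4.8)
The plan is to reduce everything to three results already established: the expression of the exterior product through extraordinary inverse images along the projections (Lemma \ref{tensorofpullbacks}), the compatibility of $f^!$ with the $\O$-linear tensor product (Lemma \ref{pullbackmonoidal}), and the functoriality $(gf)^!\cong f^!g^!$ (Proposition \ref{compandadj}.(i)). Write $p_i^Y$ for the projections from $Y_1\times Y_2$ and $p_i^X$ for those from $X_1\times X_2$, so that the two evident squares commute: $p_1^Y\circ(f\times g)=f\circ p_1^X$ and $p_2^Y\circ(f\times g)=g\circ p_2^X$. Note that both $(p_i^Y)^!\M^\bullet$ and their tensor product are genuine $\w{\D}_{Y_1\times Y_2}$-modules, so all functors below apply.

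First I would rewrite the source. By Lemma \ref{tensorofpullbacks}, $\M^\bullet\boxtimes\N^\bullet$ is, up to the shift $[-\mathrm{dim}Y_1-\mathrm{dim}Y_2]$, the $\O_{Y_1\times Y_2}$-tensor product of $(p_1^Y)^!\M^\bullet$ and $(p_2^Y)^!\N^\bullet$. Applying $(f\times g)^!$ (which commutes with shifts) and moving it through the tensor product via Lemma \ref{pullbackmonoidal}, with $\mathrm{dim}(Y_1\times Y_2)=\mathrm{dim}Y_1+\mathrm{dim}Y_2$ and $\mathrm{dim}(X_1\times X_2)=\mathrm{dim}X_1+\mathrm{dim}X_2$, introduces a further shift $[\mathrm{dim}Y_1+\mathrm{dim}Y_2-\mathrm{dim}X_1-\mathrm{dim}X_2]$ and produces the $\O_{X_1\times X_2}$-tensor product of $(f\times g)^!(p_1^Y)^!\M^\bullet$ and $(f\times g)^!(p_2^Y)^!\N^\bullet$. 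The two shifts combine to $[-\mathrm{dim}X_1-\mathrm{dim}X_2]$. Proposition \ref{compandadj}.(i) applied to the two commuting squares then rewrites each factor as $(f\times g)^!(p_1^Y)^!\M^\bullet\cong (p_1^X)^!f^!\M^\bullet$ and $(f\times g)^!(p_2^Y)^!\N^\bullet\cong (p_2^X)^!g^!\N^\bullet$. A second application of Lemma \ref{tensorofpullbacks}, now on $X_1\times X_2$ with the inputs $f^!\M^\bullet$ and $g^!\N^\bullet$, identifies the resulting expression with $(f^!\M^\bullet)\boxtimes(g^!\N^\bullet)$, completing the chain.

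The argument is therefore purely formal, and I expect no serious obstacle: the only genuine bookkeeping is the cancellation of the degree shifts, which I have checked works out exactly as above. The one point deserving a sentence of care is naturality, namely that the composite isomorphism agrees with the canonical morphism $(f\times g)^!(\M^\bullet\boxtimes\N^\bullet)\to(f^!\M^\bullet)\boxtimes(g^!\N^\bullet)$ coming from the universal properties of the tensor products and inverse images; since each constituent isomorphism (from Lemmas \ref{tensorofpullbacks}, \ref{pullbackmonoidal} and Proposition \ref{compandadj}.(i)) is itself natural, and each is the comparison map attached to the corresponding adjunction, this compatibility is automatic.
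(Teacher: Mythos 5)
Your proposal is correct and is essentially the paper's own proof: both arguments combine Lemma \ref{tensorofpullbacks}, Lemma \ref{pullbackmonoidal} and the composition law $(gf)^!\cong f^!g^!$ applied to the commuting projection squares, with the same cancellation of shifts. The only cosmetic difference is that you start from $(f\times g)^!(\M^\bullet\boxtimes\N^\bullet)$ and unwind, whereas the paper starts from $f^!\M^\bullet\boxtimes g^!\N^\bullet$ and builds up.
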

\begin{proof}
Let $p_i: X_1\times X_2\to X_i$, $q_i: Y_1\times Y_2\to Y_i$ denote the natural projections for $i=1, 2$.

By Lemma \ref{tensorofpullbacks}, 
\begin{equation*}
f^!\M^\bullet\boxtimes g^!\N^\bullet\cong p_1^!f^!\M^\bullet\widetilde{\otimes}^\mathbb{L}_{\O_{X_1\times X_2}}p_2^!g^!\N^\bullet[-\mathrm{dim}X_1-\mathrm{dim}X_2].
\end{equation*}
Since 
\begin{equation*}
p_1^!f^!\cong (fp_1)^!\cong (f\times g)^!q_1^!
\end{equation*} 
by \cite[Proposition 7.5]{Bode6Op}, and likewise
\begin{equation*}
p_2^!g^!\cong (gp_2)^!\cong (f\times g)^!q_2^!,
\end{equation*}
this yields
\begin{align*}
f^!\M^\bullet\boxtimes g^!\N^\bullet&\cong (f\times g)^!q_1^!\M^\bullet\widetilde{\otimes}^\mathbb{L}_{\O_{X_1\times X_2}} (f\times g)^!q_2^!\N^\bullet[-\mathrm{dim}X_1-\mathrm{dim}X_2]\\
&\cong (f\times g)^!(q_1^!\M^\bullet\widetilde{\otimes}^\mathbb{L}_{\O_{Y_1\times Y_2}}q_2^!\N^\bullet)[-\mathrm{dim}Y_1-\mathrm{dim}Y_2],
\end{align*}
where the last isomorphism follows from Lemma \ref{pullbackmonoidal}. The result follows by applying Lemma \ref{tensorofpullbacks} for a second time.
\end{proof}

\subsection{Further properties of the duality functor}

Recall from \cite[Theorem 9.11]{Bode6Op} that for any smooth morphism $f$ of smooth rigid analytic $K$-varieties, $f^!$ is exact on coadmissible modules and preserves $\C$-complexes. This subsection is devoted to the following theorem:

\begin{thm}
	\label{pullbackdual}
	Let $f:X\to Y$ be a smooth morphism of smooth rigid analytic $K$-varieties. Then there is a natural isomorphism
	\begin{equation*}
		\mathbb{D}f^!\M^\bullet\cong f^!\mathbb{D}\M^\bullet
	\end{equation*}
	for any $\M^\bullet\in \mathrm{D}^b_\C(\w{\D}_Y)$. In particular, $f^!\M^\bullet\cong f^+\M^\bullet$ in that case.
\end{thm}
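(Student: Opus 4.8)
The plan is to produce a natural comparison morphism $\mathbb{D}_X f^!\M^\bullet \to f^!\mathbb{D}_Y\M^\bullet$ from the defining formulas (via the adjunction relating $f^!$ to the relevant $\mathrm{R}\mathcal{H}om$) and then to check that it is an isomorphism. Since $f^!$ for smooth $f$ and $\mathbb{D}$ both preserve $\mathrm{D}^b_\C$ (Theorem \ref{Ccomplexprops}) and commute with restriction to admissible opens, both sides are $\C$-complexes and the statement is local on $X$ and $Y$. I would therefore reduce to the case where $X=\Sp A$ and $Y=\Sp B$ are D-regular affinoids with compatible local coordinate systems, so that $f$ admits the standard factorisation $X \xrightarrow{g} V\times Y \xrightarrow{p} Y$ with $g$ étale and $p=p_2$ the projection from the product with a smooth affinoid $V$ of dimension $d=\dim X-\dim Y$. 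Using $(pg)^!\cong g^!p^!$ (Proposition \ref{compandadj}.(i)) together with $\mathbb{D}^2\cong \mathrm{id}$ on $\C$-complexes (Theorem \ref{Ccomplexprops}.(iv)), the problem splits into the two cases $f=g$ étale and $f=p$ a projection.

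For the étale case, $\dim X=\dim(V\times Y)$ and $\w{\D}_{X\to V\times Y}\cong g^*\w{\D}_{V\times Y}\cong \w{\D}_X$ as the appropriate bimodule, so $g^!$ is the unshifted flat pullback $g^*$; since also $\Omega_X\cong g^*\Omega_{V\times Y}$, the commutation $\mathbb{D}_X g^!\cong g^!\mathbb{D}_{V\times Y}$ follows from flat base change for $\mathrm{R}\mathcal{H}om_{\w{\D}}(-,\w{\D})$. For the projection, I would invoke Corollary \ref{boxwithO} to rewrite $p^!\M^\bullet\cong \O_V\boxtimes \M^\bullet[\dim Y]$ and so reduce to the compatibility of $\mathbb{D}_{V\times Y}$ with the exterior product $\boxtimes$ when the first factor is the structure sheaf, together with the self-duality $\mathbb{D}_V\O_V\cong \O_V$ of the trivial connection (a special case of the computation behind Theorem \ref{ICishol}). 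Granting $\mathbb{D}_{V\times Y}(\O_V\boxtimes \M^\bullet)\cong \O_V\boxtimes \mathbb{D}_Y\M^\bullet$ up to the appropriate shift, applying Corollary \ref{boxwithO} a second time yields $\mathbb{D}_{V\times Y}\,p^!\M^\bullet\cong p^!\mathbb{D}_Y\M^\bullet$.

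The main obstacle is this $\mathbb{D}$--$\boxtimes$ compatibility for the projection, which is the only step carrying genuine content: it amounts to a relative-duality computation, most cleanly obtained from the relative Spencer (Koszul) resolution of $\O_V$ as a relative $\w{\D}$-module, which resolves $\w{\D}_{V\times Y\to Y}$ by copies of $\w{\D}_{V\times Y}\widetilde{\otimes}_{\O_{V\times Y}}\wedge^\bullet \T_{V\times Y/Y}$ and thereby computes $\mathrm{R}\mathcal{H}om_{\w{\D}_{V\times Y}}(p^!\M^\bullet,\w{\D}_{V\times Y})$ in terms of $\w{\D}_{Y\leftarrow V\times Y}$ with a shift by $d$. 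Two points require care in the analytic setting: that this resolution behaves well under the completed tensor product $\widetilde{\otimes}$ and survives passage to the limit over the levels $\D_n$ (using D-regularity to reduce to perfect, hence free, building blocks, together with the acyclicity and exactness results of \cite{Bode6Op}), and the bookkeeping of the various degree shifts coming from the $\D$-module normalisation of $f^!$ and $\mathbb{D}$.

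Once $\mathbb{D}_Xf^!\cong f^!\mathbb{D}_Y$ is established, the final assertion $f^!\M^\bullet\cong f^+\M^\bullet$ is immediate: by definition $f^+=\mathbb{D}_Xf^!\mathbb{D}_Y\cong \mathbb{D}_X\mathbb{D}_Xf^!\cong f^!$, using the commutation just proved and biduality applied to the bounded $\C$-complex $f^!\M^\bullet$.
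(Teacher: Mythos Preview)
Your strategy is broadly on the right track and ends at the same computational core as the paper --- the self-duality of the relative Spencer--Koszul complex --- but the route differs. The paper does \emph{not} factor $f$ as \'etale followed by a projection. Instead, it first constructs the natural transformation $\mathbb{D}f^!\to f^!\mathbb{D}$ globally via the chain
\[
\mathrm{Hom}(\M^\bullet,\M^\bullet)\cong \mathrm{Hom}(\O_Y,\mathbb{D}\M^\bullet\widetilde{\otimes}^{\mathbb{L}}_{\O_Y}\M^\bullet)\to \mathrm{Hom}(\O_X,\mathbb{L}f^*\mathbb{D}\M^\bullet\widetilde{\otimes}^{\mathbb{L}}_{\O_X}\mathbb{L}f^*\M^\bullet)\cong \mathrm{Hom}(\mathbb{D}\mathbb{L}f^*\M^\bullet,\mathbb{L}f^*\mathbb{D}\M^\bullet),
\]
using the tensor--Hom formulas of \cite{Bodegldim}. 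Then, to check the morphism is an isomorphism, it passes to local coordinates $(x_1,\dots,x_m,\partial_1,\dots,\partial_m)$ on $X$ chosen so that $f^*\T_Y\cong \T_X/\bigoplus_{i>d}\O_X\partial_i$, picks compatible lattices giving $\D_{X_n}$, $\D_{Y_n}$, and invokes \cite[Corollary 8.17]{Bode6Op}: a morphism of $\C$-complexes is an isomorphism iff it is so after $\D_{X_n}\widetilde{\otimes}^{\mathbb{L}}_{\w{\D}_X}-$ for all $n$. This collapses both sides to the $\D_n$-level, where Auslander regularity reduces to $\M_n^\bullet=\D_{Y_n}$, and the final computation is precisely the Koszul duality you describe.

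Your factorisation buys modularity but costs you an extra justification: the \'etale step relies on ``flat base change for $\mathrm{R}\mathcal{H}om_{\w{\D}}(-,\w{\D})$'', which in the completed bornological setting is not free and is not established anywhere in the paper. You would still have to reduce to $\D_n$-levels and argue via perfect complexes to make that base change rigorous --- at which point you have essentially reproduced the paper's argument anyway, just with an unnecessary intermediate step. The paper's single coordinate chart adapted to the smooth morphism handles both the ``\'etale'' and ``projection'' contributions simultaneously. Your sketch of the natural morphism ``via the adjunction'' is also rather vague compared to the paper's explicit construction; note that the construction of the morphism is where the boundedness hypothesis on $\M^\bullet$ is actually used.
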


\begin{proof}
Let $\M^\bullet\in\mathrm{D}^b_\C(\w{\D}_Y)$.

By \cite[Proposition 6.25]{Bodegldim}, there is a natural isomorphism 

\begin{equation*}
\mathrm{R}\mathcal{H}om_{\w{\D}_Y}(\M^\bullet, \M^\bullet)\cong \Omega_Y\widetilde{\otimes}^\mathbb{L}_{\w{\D}_Y}(\mathbb{D}\M^\bullet\widetilde{\otimes}^\mathbb{L}_{\O_Y} \M^\bullet)[-\mathrm{dim}Y].
\end{equation*}
Applying \cite[Proposition 6.22]{Bodegldim} yields a natural isomorphism
\begin{equation*}
\mathrm{R}\mathcal{H}om_{\w{\D}_Y}(\M^\bullet, \M^\bullet)\cong \mathrm{R}\mathcal{H}om_{\w{\D}_Y}(\O_Y, \mathbb{D}\M^\bullet\widetilde{\otimes}^\mathbb{L}_{\O_Y} \M^\bullet).
\end{equation*}
We thus obtain the natural isomorphism
\begin{equation*}
\mathrm{Hom}_{\mathrm{D}(\w{\D}_Y)}(\M^\bullet, \M^\bullet)\cong \mathrm{Hom}_{\mathrm{D}(\w{\D}_Y)}(\O_Y, \mathbb{D}\M^\bullet\widetilde{\otimes}^\mathbb{L}_{\O_Y} \M^\bullet)
\end{equation*}
by taking $\mathrm{H}^0\circ \mathrm{R}\Gamma$ of the above.

This yields the natural morphism
\begin{align*}
\mathrm{Hom}_{\mathrm{D}(\w{\D}_Y)}(\M^\bullet, \M^\bullet)&\cong \mathrm{Hom}_{\mathrm{D}(\w{\D}_Y)}(\O_Y, \mathbb{D}\M^\bullet\widetilde{\otimes}^\mathbb{L}_{\O_Y} \M^\bullet)\\
&\to \mathrm{Hom}_{\mathrm{D}(\w{\D}_X)}(\mathbb{L}f^*\O_Y, \mathbb{L}f^*\mathbb{D}\M^\bullet\widetilde{\otimes}^\mathbb{L}_{\O_X} \mathbb{L}f^*\M^\bullet)\\
&\cong \mathrm{Hom}_{\mathrm{D}(\w{\D}_X)}(\O_X, \mathbb{L}f^*\M^\bullet\widetilde{\otimes}_{\O_X}^{\mathbb{L}}\mathbb{L}f^*\mathbb{D}\M^\bullet)\\
&\cong \mathrm{Hom}_{\mathrm{D}(\w{\D}_X)}(\O_X, \mathbb{D}^2\mathbb{L}f^*\M^\bullet\widetilde{\otimes}^\mathbb{L}_{\O_X}\mathbb{L}f^*\mathbb{D}\M^\bullet)\\
&\cong \mathrm{Hom}_{\mathrm{D}(\w{\D}_X)}(\mathbb{D}\mathbb{L}f^*\M^\bullet, \mathbb{L}f^*\mathbb{D}\M^\bullet),
\end{align*}
so that the identity morphism $\M^\bullet\to \M^\bullet$ gives rise to a natural morphism $\mathbb{D}\mathbb{L}f^*\M^\bullet\to \mathbb{L}f^*\mathbb{D}\M^\bullet$. Applying the shift $[\mathrm{dim}X-\mathrm{dim}Y]$, this gives a natural morphism
\begin{equation*}
\mathbb{D}f^!\M^\bullet\to f^!\mathbb{D}\M^\bullet.
\end{equation*}
To prove that this is an isomorphism, we can assume that $X=\Sp B$ and $Y=\Sp A$ are smooth affinoids with $X$ admitting a coordinate system $(x_1, \dots, x_m, \partial_1, \dots, \partial_m)$ with the property that
\begin{equation*}
f^*\T_Y\cong \T_X/\oplus_{i=d+1}^m \O_X\partial_i,
\end{equation*}
where $d=\mathrm{dim}Y$. In particular,  the $B$-module $B\otimes_A \T_Y(Y)$ is a direct summand of $\T_X(X)$. By \cite[Theorem 1.1]{Bodegldim}, we can assume that $A$ admits an affine formal model $\A$ and an $(R, \A)$-Lie lattice $\L'$ such that $\h{U_\A(\pi^n\L')}_K$ is Auslander regular for all $n\geq 0$.  Choose an affine formal model $\B$ of $B$ containing the image of $\A$, then there exists a finitely generated $\B$-submodule $\L\subseteq \T_X(X)$ such that $K\otimes_R \L=\T_X(X)$ and the image of $\L$ in $B\otimes_A \T_Y(Y)$ is precisely $\B\otimes_\A \L'$. Rescaling $\L'$ and $\L$, we can assume that $\L$ is an $(R, \B)$-Lie lattice. We denote the resulting sheaves by $\D_{X_n}$, $\D_{Y_n}$, respectively.

We wish to show that the morphism 
\begin{equation*}
\mathbb{D}f^!\M^\bullet\to f^!\mathbb{D}\M^\bullet
\end{equation*}  
of $\C$-complexes is an isomorphism, so it suffices to show that
\begin{equation*}
\D_{X_n}\widetilde{\otimes}^\mathbb{L}_{\w{\D}_X} \mathbb{D}f^!\M^\bullet \to \D_{X_n}\widetilde{\otimes}^\mathbb{L}_{\w{\D}_X} f^!\mathbb{D}\M^\bullet
\end{equation*}
is an isomorphism for all $n$, by \cite[Corollary 8.17]{Bode6Op}.

By the proof of \cite[Theorem 9.17]{Bode6Op}, we have
\begin{equation*}
\D_{X_n}\widetilde{\otimes}^\mathbb{L}_{\w{\D}_X} \mathbb{D}\N^\bullet \cong \mathrm{R}\mathcal{H}om_{\D_{X_n}}(\D_{X_n}\widetilde{\otimes}^\mathbb{L}_{\w{\D}_X}\N^\bullet, \D_{X_n})\widetilde{\otimes}_{\O_X} \Omega_X^{-1}[\mathrm{dim}X]
\end{equation*}
for any $\C$-complex $\N^\bullet$ on $X$. We denote the right hand side by $\mathbb{D}_n(\N_n^\bullet)$ to simplify the notation.

Furthermore, the isomorphism
\begin{align*}
f^*\D_{Y_n}&\cong \D_{X_n}/\sum_{i=d+1}^m\D_{X_n}\cdot \partial_i\\
&\cong \D_{X_n}\widetilde{\otimes}^\mathbb{L}_{\w{\D}_X} f^*\w{\D}_X
\end{align*}
shows that
\begin{equation*}
\D_{X_n}\widetilde{\otimes}^\mathbb{L}_{\w{\D}_X}f^*\M^\bullet\cong f^*(\D_{Y_n}\widetilde{\otimes}^\mathbb{L}_{\w{\D}_Y}\M^\bullet)
\end{equation*}
naturally.

Thus, the morphism above can be written as 
\begin{equation*}
\mathbb{D}_n\mathbb{L}f^*(\M_n^\bullet)[\mathrm{dim}X-\mathrm{dim}Y]\to \mathbb{L}f^*\mathbb{D}_n(\M_n^\bullet)[\mathrm{dim}X-\mathrm{dim}Y] 
\end{equation*}
for $\M_n^\bullet:=\D_{Y_n}\widetilde{\otimes}^\mathbb{L}_{\w{\D}_Y} \M^\bullet\in \mathrm{D}^b_\mathrm{coh}(\D_{Y_n})$.

By Auslander regularity of $\D_{Y_n}(Y)$, it now suffices to show the isomorphism in the case when $\M_n^\bullet=\D_{Y_n}$. Tensoring both sides with $\Omega_X\widetilde{\otimes}_{\O_X}$, the left hand side becomes
\begin{equation*}
\mathrm{R}\mathcal{H}om_{\D_{X_n}}(\D_{X_n}/\sum_{i=d+1}^m \D_{X_n}\cdot \partial_i, \D_{X_n})[2\mathrm{dim}X-\mathrm{dim}Y],
\end{equation*}
which can be calculated by a Spencer--Koszul resolution, while the right hand side is the right $\D_{X_n}$-module
\begin{equation*}
\Omega_X\widetilde{\otimes}_{\O_X}f^*\D_{Y_n}\widetilde{\otimes}_{\O_X}\Omega_X^{-1}[\mathrm{dim}X]\cong f^*(\D_{Y_n}^\mathrm{op})[\mathrm{dim}X].
\end{equation*} 
The desired isomorphism thus reduces to the statement that the dual of the Koszul resolution provides a resolution of the right module $\D_{X_n}/\sum_{i=d+1}^m \partial_i\cdot \D_{X_n}$, shifted by $\mathrm{dim}X-\mathrm{dim}Y=m-d$. 
\end{proof}

We remark that the boundedness assumption on $\M^\bullet$ was only needed to produce the natural morphism, using the isomorphisms from \cite{Bodegldim} -- it is quite likely that this holds more generally for arbitrary $\C$-complexes.

As far as direct images are concerned, the direct image functor $f_+$ for $f$ projective tends to exhibit similar properties to $f^!$ for $f$ smooth. Accordingly, we show the following in \cite{Bodepoincare}:
\begin{prop}
	\label{Poincare}
	Let $f: X\to Y$ be a projective morphism of smooth rigid analytic $K$-varieties. There is a natural isomorphism
	\begin{equation*}
		f_+\mathbb{D}\M^\bullet\to \mathbb{D}f_+\M^\bullet
	\end{equation*}
	for $\M^\bullet\in \mathrm{D}_{\C}(\w{\D}_X)$. In particular, $f_+\M^\bullet\cong f_!\M^\bullet$ for all $\M^\bullet\in \mathrm{D}_{\C}(\w{\D}_X)$.
\end{prop}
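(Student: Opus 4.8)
The plan is to construct the natural morphism first and then verify that it is an isomorphism by factoring $f$ and treating the building blocks separately. Since a projective morphism is separated and factors as a finite morphism followed by a projection from projective space, it is managable, so the adjunctions of Proposition \ref{compandadj} are available. I would construct the comparison map $f_+\mathbb{D}\M^\bullet\to \mathbb{D}f_+\M^\bullet$ following the pattern of Theorem \ref{pullbackdual}: feeding the identity on $f_+\M^\bullet$ through the $\mathrm{R}\mathcal{H}om$--duality identities of \cite{Bodegldim} together with the adjunction $\mathrm{R}\mathcal{H}om_{\w{\D}_Y}(f_!\M^\bullet,\N^\bullet)\cong \mathrm{R}f_*\mathcal{H}om_{\w{\D}_X}(\M^\bullet,f^!\N^\bullet)$ yields a canonical morphism. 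Equivalently, writing $f_!=\mathbb{D}f_+\mathbb{D}$ so that $\mathbb{D}f_+\cong f_!\mathbb{D}$, the assertion is that the canonical comparison $f_+\to f_!$ is invertible; the \emph{in particular} clause then follows by replacing $\M^\bullet$ with $\mathbb{D}\M^\bullet$ and applying biduality $\mathbb{D}^2\cong\mathrm{id}$ from Theorem \ref{Ccomplexprops}.(iv).

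To show the morphism is an isomorphism I would argue locally on $Y$ and reduce to the coherent level. By \cite[Corollary 8.17]{Bode6Op} it suffices to prove that $\D_{Y_n}\widetilde{\otimes}^\mathbb{L}_{\w{\D}_Y}-$ carries it to an isomorphism for every $n$, which returns us to the setting of bounded complexes of coherent $\D_n$-modules over Auslander-regular base rings. Exactly as in Proposition \ref{boxdirect}, I would then factor $f$ as a closed embedding $i\colon X\hookrightarrow \mathbb{P}^{d, \mathrm{an}}_K\times Y$ followed by the projection $\mathrm{pr}\colon \mathbb{P}^{d, \mathrm{an}}_K\times Y\to Y$, and use the compatibility of $f_+$ with composition (Proposition \ref{compandadj}.(ii)) to split the problem into the two cases of a closed embedding and of the projection from projective space.

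The closed embedding is formal. By the Kashiwara equivalence of Theorem \ref{Ccomplexprops}.(i), $i_+$ is an equivalence onto the $\C$-complexes supported on $X$, and using the explicit description of $i_+^r$ and $i_r^!$ from the proof of Proposition \ref{Kashiwaratest} one checks directly that $\mathbb{D}$ intertwines $i_+$ with itself, giving $i_+\mathbb{D}\cong\mathbb{D}i_+$. The genuine content lies in the projection $\mathrm{pr}$. Here I would resolve the transfer bimodule by the relative Spencer--Koszul complex $\mathcal{S}^\bullet=(\Omega^\bullet_{\mathbb{P}^{d, \mathrm{an}}_K}\widetilde{\otimes}_{\O_{\mathbb{P}^{d, \mathrm{an}}_K}}\w{\D}_{\mathbb{P}^{d, \mathrm{an}}_K})\boxtimes \w{\D}_Y$ as in Proposition \ref{boxdirect}, compute $\mathrm{pr}_+$ via the total complex of the associated Cech complex of $\mathcal{S}^\bullet$ for a finite affinoid cover of $\mathbb{P}^{d, \mathrm{an}}_K$, and identify the duality morphism with the trace pairing furnished by coherent (Serre) duality on the proper rigid space $\mathbb{P}^{d, \mathrm{an}}_K$, whose top-degree component supplies the required isomorphism. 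As in Proposition \ref{boxdirect}, finite cohomological dimension of the side-changed transfer functor lets one reduce to a single coadmissible module here.

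The main obstacle is this projection case. The crux is to construct, at the level of each $\D_n$, a trace morphism realising coherent duality on $\mathbb{P}^{d, \mathrm{an}}_K$ and to verify its compatibility with the coherent duality functor $\mathbb{D}_n$ appearing in the proof of Theorem \ref{pullbackdual}, and then to control this identification under the inverse limit over $n$ for possibly unbounded $\M^\bullet$. The boundedness caveat in Theorem \ref{pullbackdual} indicates that the delicate point is the construction of the natural morphism rather than the verification of bijectivity; once the relative duality on $\mathbb{P}^{d, \mathrm{an}}_K$ is available $\D_n$-locally and shown to commute with base change in $n$, the remaining steps are the standard diagram chases the paper advertises.
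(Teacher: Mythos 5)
The paper itself contains no proof of this proposition: it is quoted from the companion paper \cite{Bodepoincare}, listed as \emph{in preparation}, so strictly speaking there is nothing in the text to compare against. That said, your outline --- factor a projective $f$ as a closed embedding into $\mathbb{P}^{d, \mathrm{an}}_K\times Y$ followed by the projection, handle the closed embedding via Kashiwara's equivalence and the self-duality of the Koszul resolution, handle the projection via the Spencer--Koszul resolution of $\w{\D}_{X\to Y}$ and a trace map realising Serre duality on $\mathbb{P}^{d,\mathrm{an}}_K$ (via GAGA), and verify everything after base change to each $\D_n$ --- is precisely the strategy the author intends, and your final paragraph correctly locates where the real work is.

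One soft spot: your first proposed construction of the comparison morphism, by feeding the identity through the adjunction $\mathrm{R}\mathcal{H}om_{\w{\D}_Y}(f_!\M^\bullet,\N^\bullet)\cong \mathrm{R}f_*\mathcal{H}om_{\w{\D}_X}(\M^\bullet,f^!\N^\bullet)$ of Proposition \ref{compandadj}.(iii), does not run as stated: the natural test object is $\N^\bullet=\w{\D}_Y$, but then $f^!\N^\bullet\cong \w{\D}_{X\to Y}[\mathrm{dim}X-\mathrm{dim}Y]$ is not a $\C$-complex, so the hypotheses of that adjunction fail; and rephrasing the claim as ``the canonical comparison $f_+\to f_!$ is invertible'' presupposes a comparison map that still has to be built. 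The intended construction instead follows \cite[Theorem 2.7.2]{HTT}: one first establishes the isomorphism
\begin{equation*}
\mathrm{R}\mathcal{H}om_{\w{\D}_X}(\M^\bullet, \w{\D}_X)\widetilde{\otimes}^\mathbb{L}_{\w{\D}_X} \w{\D}_{X\to Y}\cong \mathrm{R}\mathcal{H}om_{\w{\D}_X}(\M^\bullet, \w{\D}_{X\to Y})
\end{equation*}
(itself a nontrivial homotopy-limit argument over the $\D_n$, using Auslander regularity and the finiteness of the Spencer resolution), and then composes with an explicitly constructed trace morphism $f_+\w{\D}_{X\to Y}[\mathrm{dim}X]\to \w{\D}_Y[\mathrm{dim}Y]$, obtained from the localisation triangle of Proposition \ref{triangle} in the closed-embedding case and from the pushforward of the de Rham complex of $\mathbb{P}^{d,\mathrm{an}}_K$ in the projection case. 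With that substitution your argument coincides with the intended one; the rest of your reductions (to the $\D_n$-level via \cite[Corollary 8.17]{Bode6Op}, to free modules via Auslander regularity, and the computation $\mathrm{R}\Gamma(\mathbb{P}^{d,\mathrm{an}}_K,\Omega_{\mathbb{P}^{d,\mathrm{an}}_K})\cong K[-d]$) are exactly right.
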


\section{$0$-holonomic $\C$-complexes}
\subsection{Definition and first properties}
Let $X$ be a smooth rigid analytic $K$-variety.
\begin{defn}
A $\C$-complex $\M^\bullet$ on $X$ is called \textbf{$0$-holonomic} if for any smooth morphism $f: X'\to X$ and any divisor $Z\subseteq X'$, we have $\mathrm{R}\underline{\Gamma}_Z(f^!\M^\bullet)\in \mathrm{D}_\C(\w{\D}_{X'})$.
\end{defn}
We denote by $\mathrm{D}_{0-\mathrm{hol}}(\w{\D}_X)$ the full subcategory of $\mathrm{D}(\w{\D}_X)$ consisting of all $0$-holonomic $\C$-complexes on $X$.
\begin{lem}
	\label{zeroholtriang}
	The subcategory $\mathrm{D}_{0-\mathrm{hol}}(\w{\D}_X)$ is a triangulated subcategory of $\mathrm{D}(\w{\D}_X)$.
\end{lem}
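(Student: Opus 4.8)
The plan is to verify the two defining closure properties of a full triangulated subcategory: stability under the shift functor, and the two-out-of-three property for distinguished triangles. The whole argument is formal, resting on the observation that both operations appearing in the definition are triangulated functors. Indeed, the derived local cohomology functor $\mathrm{R}\underline{\Gamma}_Z$ is the right derived functor of a left exact functor, so it commutes with shifts and sends distinguished triangles to distinguished triangles; and for a smooth morphism $f: X'\to X$ the functor $f^!$ is likewise triangulated and preserves $\C$-complexes by Theorem \ref{Ccomplexprops}.(ii). I will also use repeatedly that, for any smooth rigid analytic $K$-variety, $\mathrm{D}_\C(\w{\D})$ is a triangulated subcategory of $\mathrm{D}(\w{\D})$.

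First I would check stability under shifts. Given a $0$-holonomic $\M^\bullet$, the shift $\M^\bullet[1]$ is again a $\C$-complex, and for any smooth $f$ and divisor $Z\subseteq X'$ there is a natural isomorphism $\mathrm{R}\underline{\Gamma}_Z(f^!(\M^\bullet[1]))\cong \mathrm{R}\underline{\Gamma}_Z(f^!\M^\bullet)[1]$, since $f^!$ and $\mathrm{R}\underline{\Gamma}_Z$ commute with the shift. The right-hand side is a $\C$-complex by hypothesis, hence so is its shift, and therefore $\M^\bullet[1]$ is $0$-holonomic; the same reasoning handles $[-1]$.

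Next I would establish the two-out-of-three property. Suppose $\M^\bullet\to \N^\bullet\to \cP^\bullet\to \M^\bullet[1]$ is a distinguished triangle in $\mathrm{D}(\w{\D}_X)$ with two of the three objects $0$-holonomic. Then two of them are $\C$-complexes, so the third is a $\C$-complex as well. Fix an arbitrary smooth morphism $f: X'\to X$ and a divisor $Z\subseteq X'$. Applying the triangulated functor $f^!$ and then $\mathrm{R}\underline{\Gamma}_Z$ yields a distinguished triangle
\[
\mathrm{R}\underline{\Gamma}_Z(f^!\M^\bullet)\to \mathrm{R}\underline{\Gamma}_Z(f^!\N^\bullet)\to \mathrm{R}\underline{\Gamma}_Z(f^!\cP^\bullet)\to
\]
in $\mathrm{D}(\w{\D}_{X'})$. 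Two of its three objects lie in $\mathrm{D}_\C(\w{\D}_{X'})$, because the corresponding two of $\M^\bullet, \N^\bullet, \cP^\bullet$ are $0$-holonomic, so the third lies in $\mathrm{D}_\C(\w{\D}_{X'})$ by triangulated stability. As $f$ and $Z$ were arbitrary, the remaining object of the original triangle satisfies the defining condition of $0$-holonomicity.

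The hard part here will be essentially nonexistent: there is no genuine analytic content, and the only points deserving care are the confirmation that $f^!$ and $\mathrm{R}\underline{\Gamma}_Z$ commute with shifts and carry distinguished triangles to distinguished triangles, together with the triangulated stability of $\C$-complexes — all of which are available from the results recalled above. This matches the ``purely formal'' character advertised in the introduction for the holonomicity sections.
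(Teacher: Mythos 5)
Your argument is correct and is essentially the paper's own proof, just written out in more detail: the paper likewise reduces everything to the facts that $\mathrm{D}_\C(\w{\D}_{X'})$ is a triangulated subcategory and that $f^!$ and $\mathrm{R}\underline{\Gamma}_Z$ preserve distinguished triangles (and shifts).
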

\begin{proof}
	This follows directly from the fact that $\mathrm{D}_\C(\w{\D}_X)$ is triangulated (by \cite[Proposition 8.4]{Bode6Op}) and the functors $f^!$, $\mathrm{R}\underline{\Gamma}_Z$ preserve distinguished triangles.
\end{proof}

Recall that a vector bundle with integrable connection (or for short, an integrable connection) is an $\O$-coherent $\w{\D}$-module.

\begin{prop}
\label{ICzero}
Integrable connections are $0$-holonomic.
\end{prop}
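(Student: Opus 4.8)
The plan is to show that the defining property of $0$-holonomicity is stable under the operations that appear in the definition, when applied to an integrable connection. Let $\M$ be an integrable connection on $X$, i.e. an $\O_X$-coherent $\w{\D}_X$-module. I need to verify that for every smooth $f: X'\to X$ and every divisor $Z\subseteq X'$, the local cohomology $\mathrm{R}\underline{\Gamma}_Z(f^!\M)$ is a $\C$-complex.

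\textbf{First step: reduce to a statement purely on $X'$.} For $f$ smooth, $f^!\M = \w{\D}_{X'\to X}\widetilde\otimes^{\mathbb{L}}_{f^{-1}\w{\D}_X} f^{-1}\M\,[\dim X'-\dim X]$ is again $\O_{X'}$-coherent (up to shift): since $\M$ is $\O$-coherent, the pullback of an integrable connection along a smooth — in particular flat — morphism is again an integrable connection, because $f^*$ of a vector bundle is a vector bundle and the connection pulls back. So, after a shift, $f^!\M$ is itself an integrable connection $\N$ on $X'$. This means it suffices to prove the following local statement: \emph{for any integrable connection $\N$ on a smooth $X'$ and any divisor $Z\subseteq X'$, the complex $\mathrm{R}\underline{\Gamma}_Z(\N)$ is a $\C$-complex.}

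\textbf{Second step: use the localisation triangle.} By Proposition \ref{triangle}.(i) there is a distinguished triangle
\begin{equation*}
\mathrm{R}\underline{\Gamma}_Z(\N)\to \N\to j_+j^!\N\to,
\end{equation*}
where $j: U=X'\setminus Z\to X'$ is the complementary open immersion. Since $\mathrm{D}_\C$ is triangulated and $\N$ is already a $\C$-complex (integrable connections are coadmissible $\w{\D}$-modules, concentrated in degree zero), it suffices to show that $j_+j^!\N = \mathrm{R}j_*(\N|_U)$ is a $\C$-complex. The restriction $\N|_U$ is again an integrable connection on $U$, so the problem reduces to showing that the pushforward along the complement of a divisor of an integrable connection remains a $\C$-complex. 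This is the genuinely analytic input: one works locally, picks coordinates so that $Z=\{t=0\}$ for a single coordinate function $t$ (a divisor is locally cut out by one equation), and computes $\mathrm{R}j_*$ by the natural two-term complex $\N \to \N[1/t]$ coming from inverting $t$, i.e. $\mathrm{R}^0 j_* = \N[1/t]$ (sections with poles along $Z$) and higher pushforwards vanish. The task is then to verify that $\N[1/t]$, the localisation of a vector bundle with connection at a divisor, is coadmissible as a $\w{\D}_{X'}$-module and that $\mathrm{R}j_*$ has no higher cohomology in this setting.

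\textbf{The main obstacle} is precisely this last coadmissibility statement: showing that the meromorphic extension $\N[1/t]$ (sections of $\N$ with arbitrary pole order along the divisor) is a coadmissible $\w{\D}_{X'}$-module and that $\mathrm{R}j_*$ is concentrated in degrees $0$ and $1$ with coadmissible, boundedly-many-nonzero cohomology in each $\D_n$-base change. This is where the non-archimedean analysis enters — one must control the $\D_n$-module structure of $\N[1/t]$ for each $n$ and check it is coherent over $\D_n$ with bounded cohomology after $\widetilde\otimes^{\mathbb{L}}_{\w{\D}_{X'}}\D_n$, then pass to the limit. I expect this to be handled by reducing, via D-regularity (Theorem \ref{Ccomplexprops} machinery and the strategy sketched after it), to the free rank-one case $\N=\O_{X'}$, where $\O_{X'}[1/t]$ is a classical meromorphic connection whose coadmissibility can be read off from an explicit generators-and-relations description over each $\D_n(X')$, with $\mathrm{R}j_*$ computed by a two-term Čech/Koszul complex. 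The rest of the argument — the reductions in the first two steps — is formal and follows from the triangulated structure and the compatibilities recorded in Proposition \ref{compandadj} and Theorem \ref{Ccomplexprops}.
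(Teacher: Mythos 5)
Your two formal reductions are exactly the ones the paper makes (implicitly): since $f$ is smooth, $f^!\M$ is again an integrable connection up to shift, so everything comes down to showing that $\mathrm{R}\underline{\Gamma}_Z(\N)$ is a $\C$-complex for an integrable connection $\N$ on $X'$ and a divisor $Z$. At that point the paper simply invokes \cite[Theorem 10.6]{DcapThree}, which is precisely this statement; it does not reprove it. So structurally you have located the right pivot.

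However, your sketch of the remaining analytic input has a genuine error. In rigid analytic geometry the complement $U=X'\setminus\{t=0\}$ is an admissible open exhausted by the affinoids $\{|t|\geq \varepsilon\}$, and $j_*\N$ is the inverse limit of sections over these; this is strictly larger than the algebraic localisation $\N[1/t]$ (it contains, for instance, series $\sum a_n t^{-n}$ with $|a_n|\varepsilon^{-n}\to 0$ for every $\varepsilon>0$ --- this is the whole point of Kisin's work \cite{Kisin} on analytic functions on Zariski open sets, which underlies the definition of $\mathrm{R}\underline{\Gamma}_Z$ here). So the two-term complex $\N\to\N[1/t]$ does not compute $\mathrm{R}j_*(\N|_U)$, and a generators-and-relations description of the algebraic localisation over each $\D_n$ will not establish coadmissibility of the actual pushforward. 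That coadmissibility is a substantive theorem (the content of \cite[Theorem 10.6]{DcapThree}), and the paper's own section 6 shows how delicate this kind of statement is: for connections defined only on $U$, such as $\O_U x^\lambda$, the pushforward $j_*$ can fail to be coadmissible (Lemma \ref{typelift}), so the fact that $\N$ extends to all of $X'$ must be used in an essential way. Your proof is therefore incomplete at its crucial step; the reductions surrounding it are fine.
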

\begin{proof}
If $\M$ is an integrable connection on $X$ and $f: X'\to X$ is smooth, then $f^!\M$ is an integrable connection on $X'$, shifted in degree. Hence the result follows from \cite[Theorem 10.6]{DcapThree}.
\end{proof}

\begin{lem}
\label{anyclosedsubvar}
Let $\M^\bullet$ be a $0$-holonomic $\C$-complex. Then for any smooth morphism $f: X'\to X$ and any Zariski closed subvariety $Z\subseteq X'$, we have $\mathrm{R}\underline{\Gamma}_Z(f^!\M^\bullet)\in \mathrm{D}_\C(\w{\D}_{X'})$.
\end{lem}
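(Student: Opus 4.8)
The plan is to reduce the statement to the defining case of a single divisor by an induction that exploits the fact that a finite union of divisors is again a divisor. Since being a $\C$-complex is a local condition and $\mathrm{R}\underline{\Gamma}_Z$ commutes with restriction to admissible opens, the question is local on $X'$, so I may assume $X'$ is a connected (hence irreducible, as $X'$ is smooth and thus normal) affinoid. Writing $Z$ as the vanishing locus of a coherent ideal, Noetherianity of the affinoid algebra lets me choose finitely many nonzero functions $g_1, \dots, g_r$ with $Z = V(g_1) \cap \cdots \cap V(g_r)$ set-theoretically; if $Z = X'$ there is nothing to prove, since then $\mathrm{R}\underline{\Gamma}_{X'}(f^!\M^\bullet) = f^!\M^\bullet$ is a $\C$-complex by Theorem \ref{Ccomplexprops}.(ii). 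As $\underline{\Gamma}_Z$ depends only on the open complement $X' \setminus Z$, I may freely replace the scheme-theoretic vanishing locus by this set-theoretic intersection, and each $D_i := V(g_i)$ is a divisor.

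The core is then an induction on $r$ proving that $\mathrm{R}\underline{\Gamma}_{D_1 \cap \cdots \cap D_r}(f^!\M^\bullet) \in \mathrm{D}_\C(\w{\D}_{X'})$ for any divisors $D_1, \dots, D_r$. The case $r = 1$ is precisely the definition of $0$-holonomicity of $\M^\bullet$, applied to the smooth morphism $f$ and the divisor $D_1$. For the inductive step, set $A = D_1 \cap \cdots \cap D_{r-1}$ and $B = D_r$ and feed the pair $(A, B)$ into the Mayer--Vietoris triangle of Proposition \ref{MVforDcap}:
\[
\mathrm{R}\underline{\Gamma}_{A \cap B}(f^!\M^\bullet) \to \mathrm{R}\underline{\Gamma}_A(f^!\M^\bullet) \oplus \mathrm{R}\underline{\Gamma}_B(f^!\M^\bullet) \to \mathrm{R}\underline{\Gamma}_{A \cup B}(f^!\M^\bullet) \to .
\]
Here $A \cap B = D_1 \cap \cdots \cap D_r$ is the object of interest, $\mathrm{R}\underline{\Gamma}_A$ lies in $\mathrm{D}_\C$ by the inductive hypothesis (an intersection of $r-1$ divisors), and $\mathrm{R}\underline{\Gamma}_B$ lies in $\mathrm{D}_\C$ by the case $r = 1$.

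The decisive observation is that the remaining term is also governed by the inductive hypothesis: by distributivity, $A \cup B = (D_1 \cap \cdots \cap D_{r-1}) \cup D_r = \bigcap_{i=1}^{r-1}(D_i \cup D_r)$, and each $D_i \cup D_r = V(g_i g_r)$ is again a divisor, since $g_i g_r \neq 0$ in the domain $\O(X')$. Thus $A \cup B$ is an intersection of $r-1$ divisors, so $\mathrm{R}\underline{\Gamma}_{A \cup B}(f^!\M^\bullet) \in \mathrm{D}_\C$ by the inductive hypothesis as well. Since $\mathrm{D}_\C(\w{\D}_{X'})$ is a triangulated subcategory, two out of the three vertices of the triangle lying in $\mathrm{D}_\C$ forces the third, $\mathrm{R}\underline{\Gamma}_{A \cap B}(f^!\M^\bullet)$, to lie in $\mathrm{D}_\C$, completing the induction.

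The one genuinely load-bearing point — and the only place where the shape of the definition matters — is that unions of divisors are divisors, which is what keeps the ``union'' vertex of each Mayer--Vietoris triangle within reach of the inductive hypothesis and makes the recursion terminate; note in particular that the argument never requires $Z$ (or the $D_i$) to be smooth, so Proposition \ref{triangle}.(iii) and Kashiwara's equivalence are not needed here. The only routine technical points to verify are the local finite generation of the ideal of $Z$, the reduction to a connected affinoid, and that $\mathrm{R}\underline{\Gamma}_Z$ depends only on the support of $Z$; none of these presents a real obstacle.
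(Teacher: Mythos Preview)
Your proof is correct and follows the same approach as the paper: write $Z$ locally as an intersection of divisors and apply the Mayer--Vietoris triangle of Proposition~\ref{MVforDcap}, using that $\mathrm{D}_\C$ is triangulated. The paper's proof is a one-line sketch (``Writing $Z$ as an intersection of divisors, this follows from Proposition~\ref{MVforDcap} and Lemma~\ref{zeroholtriang}''), and you have correctly unpacked the induction, including the key distributivity observation that $A\cup B=\bigcap_{i=1}^{r-1}(D_i\cup D_r)$ is again an intersection of $r-1$ divisors, which is exactly what makes the induction close.
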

\begin{proof}
Writing $Z$ as an intersection of divisors, this follows from Proposition \ref{MVforDcap} and Lemma \ref{zeroholtriang}.
\end{proof}
In particular, if $\M^\bullet$ is $0$-holonomic on $X$ and $f: X'\to X$ is smooth, $Z\subseteq X'$ a Zariski closed subvariety with complement $j: U\to X'$, then $j_+j^!\M^\bullet=j_+(\M^\bullet|_U)$ is a $\C$-complex by Proposition \ref{triangle}.

It is immediate that $0$-holonomicity is stable under $f^!$ whenever $f$ is smooth: $f^!$ preserves $\C$-complexes in this case by Theorem \ref{Ccomplexprops}.(ii), and the result follows directly from \cite[Proposition 7.5]{Bode6Op} and the fact that the composition of two smooth morphisms is smooth.
\begin{lem}
	Let $X$ be a smooth rigid analytic $K$-variety and let $\{U_i\}_{i\in I}$ be an admissible covering. Then $\M^\bullet\in \mathrm{D}(\w{\D}_X)$ is $0$-holonomic if and only if $\M^\bullet|_{U_i}$ is $0$-holonomic for all $i$.
\end{lem}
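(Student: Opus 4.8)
The plan is to deduce both implications from the locality of the $\C$-complex condition, combined with the compatibility of $f^!$ (for $f$ smooth) and of $\mathrm{R}\underline{\Gamma}_Z$ with restriction along the open immersions $\iota_i: U_i\to X$. Since an open immersion is \'etale, hence smooth of relative dimension zero, and $\dim U_i=\dim X$, the transfer bimodule $\w{\D}_{U_i\to X}$ is simply $\w{\D}_{U_i}$, so $\iota_i^!\M^\bullet\cong \M^\bullet|_{U_i}$ with no degree shift; this identification is what links the intrinsic operations on $U_i$ to restriction from $X$.

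For the forward implication, suppose $\M^\bullet$ is $0$-holonomic. Given a smooth morphism $f: X'\to U_i$ and a divisor $Z\subseteq X'$, the composite $g=\iota_i\circ f: X'\to X$ is smooth, and Proposition \ref{compandadj}.(i) yields $f^!(\M^\bullet|_{U_i})\cong f^!\iota_i^!\M^\bullet\cong g^!\M^\bullet$. Hence $\mathrm{R}\underline{\Gamma}_Z(f^!(\M^\bullet|_{U_i}))\cong \mathrm{R}\underline{\Gamma}_Z(g^!\M^\bullet)$, which lies in $\mathrm{D}_\C$ directly by the definition of $0$-holonomicity applied to $g$ and $Z$. (That $\M^\bullet|_{U_i}$ is itself a $\C$-complex is immediate from the locality of the $\C$-complex condition.)

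For the reverse implication, assume each $\M^\bullet|_{U_i}$ is $0$-holonomic; then $\M^\bullet$ is a $\C$-complex by locality. Fix a smooth morphism $f: X'\to X$ and a divisor $Z\subseteq X'$, and form the admissible covering $X'_i:=f^{-1}(U_i)$ of $X'$, with restricted, still smooth, morphisms $f_i: X'_i\to U_i$. The two compatibility statements provide natural isomorphisms
\[
\big(\mathrm{R}\underline{\Gamma}_Z(f^!\M^\bullet)\big)|_{X'_i}\cong \mathrm{R}\underline{\Gamma}_{Z\cap X'_i}\big((f^!\M^\bullet)|_{X'_i}\big)\cong \mathrm{R}\underline{\Gamma}_{Z\cap X'_i}\big(f_i^!(\M^\bullet|_{U_i})\big).
\]
Since $Z\cap X'_i$ is a divisor in $X'_i$ (or empty, in which case the local cohomology vanishes) and $\M^\bullet|_{U_i}$ is $0$-holonomic, the right-hand side is a $\C$-complex. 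As $\{X'_i\}$ is an admissible covering of $X'$ and the $\C$-complex condition is local, we conclude $\mathrm{R}\underline{\Gamma}_Z(f^!\M^\bullet)\in \mathrm{D}_\C(\w{\D}_{X'})$, so $\M^\bullet$ is $0$-holonomic.

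The only real work, and the step I expect to be the main obstacle, is establishing the two commutation statements. For $f^!$ it is immediate from the explicit formula: the transfer bimodule $\w{\D}_{X'\to X}$, the functor $f^{-1}$, the completed tensor product, and the shift $[\dim X'-\dim X]$ all restrict along $X'_i\hookrightarrow X'$ to the corresponding data for $f_i$. For $\mathrm{R}\underline{\Gamma}_Z$ the cleanest argument is to note that restriction along an open immersion admits the exact left adjoint $j_!$ (extension by zero, as already used in Lemma \ref{injisflabby}), hence preserves injective and $K$-injective objects; computing $\mathrm{R}\underline{\Gamma}_Z$ via a $K$-injective resolution $\I^\bullet$ and using the underived identity $\underline{\Gamma}_Z(\I)|_{X'_i}\cong \underline{\Gamma}_{Z\cap X'_i}(\I|_{X'_i})$ (base change of $j_*$ along open immersions) then gives the claim.
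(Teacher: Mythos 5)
Your proof is correct and follows essentially the same route as the paper: the forward direction via stability of $0$-holonomicity under smooth (here \'etale) pullback and composition of $f^!$, and the converse by restricting to the admissible covering $\{f^{-1}(U_i)\}$ of $X'$, noting that $Z\cap f^{-1}(U_i)$ is a divisor or empty and that the $\C$-complex condition is local. The extra detail you supply on the commutation of $f^!$ and $\mathrm{R}\underline{\Gamma}_Z$ with restriction to opens is left implicit in the paper but is exactly the right justification.
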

\begin{proof}
	One direction is proved by the previous remark.
	
	Conversely, if $\M^\bullet|_{U_i}$ is a $0$-holonomic $\C$-complex for all $i$, then $\M^\bullet$ is a $\C$-complex by definition. If $f: X'\to X$ is a smooth morphism, it suffices to work on the admissible covering $\{U_i\times_X X'\}$ to check that a given complex is a $\C$-complex, and the result follows, as $U_i\times_X X'\to U_i$ is smooth and $Z\cap (U_i\times_X X')$ is a divisor or empty.
\end{proof}

\begin{cor}
	\label{truncatezerohol}
	Let $X$ be a smooth rigid analytic $K$-variety. If $\M^\bullet$ is a $0$-holonomic $\C$-complex on $X$, then so is $\tau^{\geq n}\M^\bullet$, $\tau^{\leq n} \M^\bullet$ for all $n$. 
	
	In particular, $\mathrm{H}^q(\M^\bullet)$ is a $0$-holonomic $\w{\D}_X$-module for all $q$.
\end{cor}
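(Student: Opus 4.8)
The plan is to dispose of the formal containments first and then isolate the genuine content in a statement about the local cohomology of a \emph{single} coadmissible module. Since $\mathrm{D}_{0-\mathrm{hol}}(\w{\D}_X)$ is triangulated (Lemma \ref{zeroholtriang}) and sits in the truncation triangle $\tau^{\leq n-1}\M^\bullet\to \M^\bullet\to \tau^{\geq n}\M^\bullet\to$, it suffices to treat $\tau^{\geq n}$: the statement for $\tau^{\leq n-1}$ then follows from the triangle, and the final assertion follows by writing $\mathrm{H}^q(\M^\bullet)[-q]\cong \tau^{\leq q}\tau^{\geq q}\M^\bullet$ and using that $0$-holonomicity is visibly shift-invariant. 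First I would check that $\tau^{\geq n}\M^\bullet$ is at least a $\C$-complex: its cohomology groups are among those of $\M^\bullet$, hence coadmissible, and since coadmissible modules are $\D_m$-acyclic (\cite[Corollary 5.38]{Bode6Op}), canonical truncation commutes with $-\widetilde{\otimes}^{\mathbb{L}}_{\w{\D}_X}\D_m$, so $(\tau^{\geq n}\M^\bullet)_m\cong\tau^{\geq n}(\M^\bullet_m)$ is bounded for each $m$.

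For $0$-holonomicity I must verify, for every smooth $f\colon X'\to X$ and every divisor $Z\subseteq X'$, that $\mathrm{R}\underline{\Gamma}_Z(f^!\tau^{\geq n}\M^\bullet)\in\mathrm{D}_\C(\w{\D}_{X'})$. As $f$ is smooth, $f^!$ is exact up to the shift by $d:=\dim X'-\dim X$, hence commutes with canonical truncation, giving $f^!\tau^{\geq n}\M^\bullet\cong\tau^{\geq n-d}\N^\bullet$ with $\N^\bullet:=f^!\M^\bullet$. Moreover $\N^\bullet$ is again $0$-holonomic (stability of $0$-holonomicity under $f^!$ for smooth $f$), so in particular $\mathrm{R}\underline{\Gamma}_Z(\N^\bullet)\in\mathrm{D}_\C$ is already known. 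Thus the whole problem reduces to the self-contained assertion: \emph{if $\N^\bullet$ is a $0$-holonomic $\C$-complex and $Z$ a divisor, then $\mathrm{R}\underline{\Gamma}_Z(\tau^{\geq m}\N^\bullet)$ is a $\C$-complex for every $m$.}

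The two structural inputs I would rely on are that $\mathrm{R}\underline{\Gamma}_Z$ has finite cohomological amplitude (a divisor is locally principal, so local cohomology is computed, after completion, by a short complex involving $j_*$ and $\mathrm{R}^1 j_*$ for the complement $j$, of amplitude bounded by $2$) and that it commutes with the localisation $-\widetilde{\otimes}^{\mathbb{L}}_{\w{\D}}\D_m$ (flat base change along the open immersion $j$ in the localisation triangle of Proposition \ref{triangle}.(i)). Granting these, condition (i) of Definition of a $\C$-complex is immediate: the derived base change $(\mathrm{R}\underline{\Gamma}_Z\tau^{\geq m}\N^\bullet)_m$ is the local cohomology of the bounded coherent complex $(\tau^{\geq m}\N^\bullet)_m=\tau^{\geq m}(\N^\bullet_m)$ over the Noetherian algebra $\D_m$, hence again bounded and coherent. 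The remaining point—coadmissibility of the cohomology groups of $\mathrm{R}\underline{\Gamma}_Z(\tau^{\geq m}\N^\bullet)$—is the real issue, and is exactly where the pathologies of \cite{DcapThree} would otherwise intrude. Finite amplitude confines the discrepancy between $\mathrm{R}\underline{\Gamma}_Z(\tau^{\geq m}\N^\bullet)$ and the known $\C$-complex $\mathrm{R}\underline{\Gamma}_Z(\N^\bullet)$ to finitely many degrees, and I would attempt to close the argument by a dévissage on the hyper-local-cohomology spectral sequence $\mathrm{R}^p\underline{\Gamma}_Z(\mathrm{H}^s(\N^\bullet))\Rightarrow \mathrm{H}^{p+s}(\mathrm{R}\underline{\Gamma}_Z\N^\bullet)$, whose abutment is coadmissible, feeding in the localisation triangles and the fact that coadmissible modules form a Serre subcategory.

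The hard part is precisely this final coadmissibility transfer. Because $\mathrm{R}\underline{\Gamma}_Z$ does \emph{not} commute with canonical truncation, one cannot move coadmissibility from $\N^\bullet$ to $\tau^{\geq m}\N^\bullet$ degreewise, and the triangle $\mathrm{R}\underline{\Gamma}_Z\tau^{\leq m-1}\N^\bullet\to\mathrm{R}\underline{\Gamma}_Z\N^\bullet\to\mathrm{R}\underline{\Gamma}_Z\tau^{\geq m}\N^\bullet$ only relates the two truncations to each other, leaving the finitely many ``boundary'' cohomology groups underdetermined by the single datum $\mathrm{R}\underline{\Gamma}_Z\N^\bullet\in\mathrm{D}_\C$. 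I expect the clean way to pin down the intermediate terms $\mathrm{R}^p\underline{\Gamma}_Z(\mathrm{H}^s(\N^\bullet))$ is to use the \emph{full} force of $0$-holonomicity—that the same conclusion is available after every further smooth pullback and for every divisor—together with Kashiwara's equivalence (Theorem \ref{Ccomplexprops}.(i)) and an induction on $\dim Z$, reducing the $Z$-supported local cohomology to objects on the smooth locus of $Z$ where the extreme terms of the spectral sequence are controlled directly. This dimension-reduction step is the one place where the proof is more than a diagram chase.
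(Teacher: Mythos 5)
Your reduction to the single claim ``$\mathrm{R}\underline{\Gamma}_Z(\tau^{\geq m}\N^\bullet)\in\mathrm{D}_\C$ for $\N^\bullet$ $0$-holonomic and $Z$ a divisor'' is fine, and you correctly locate the difficulty in the coadmissibility of the resulting cohomology groups. But you do not close that gap: the proposed d\'evissage on the spectral sequence $\mathrm{R}^p\underline{\Gamma}_Z(\mathrm{H}^s(\N^\bullet))\Rightarrow \mathrm{H}^{p+s}(\mathrm{R}\underline{\Gamma}_Z\N^\bullet)$, supplemented by an unspecified induction on $\dim Z$ via Kashiwara, is left as a hope rather than an argument, and as you yourself note the localisation triangle relating the two truncations does not by itself pin down the boundary terms. (A secondary unjustified step: you assert that the $\D_m$-base change of $\mathrm{R}\underline{\Gamma}_Z(\tau^{\geq m}\N^\bullet)$ is automatically bounded coherent because $\D_m$ is Noetherian; coherence of local cohomology of a coherent $\D_m$-module along a divisor is not automatic and is not among the paper's inputs.)

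The missing idea is that one should not work with $\mathrm{R}\underline{\Gamma}_Z$ directly but with $j_+j^!=\mathrm{R}j_*\,j^{-1}$ for the complement $j\colon U\to X'$ of the divisor: by the localisation triangle of Proposition \ref{triangle}.(i) and the fact that $\mathrm{D}_\C$ is triangulated, $\mathrm{R}\underline{\Gamma}_Z(\N^\bullet)\in\mathrm{D}_\C$ is equivalent to $j_+j^!\N^\bullet\in\mathrm{D}_\C$ once $\N^\bullet\in\mathrm{D}_\C$. Now $f^!$ (for $f$ smooth), $j^!$ and $j_*$ are all \emph{exact} on coadmissible modules --- the nontrivial input being the vanishing of higher derived pushforwards along divisor complements, \cite[Proposition 10.1]{DcapThree} and \cite[Proposition 9.10]{Bode6Op} --- so the composite $j_+j^!f^!$ commutes with canonical truncation on complexes with coadmissible cohomology. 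Hence $j_+j^!f^!(\tau^{\geq n}\M^\bullet)\cong\tau^{\geq n}\bigl(j_+j^!f^!\M^\bullet\bigr)$ is a truncation of a $\C$-complex, which is again a $\C$-complex by \cite[Corollary 8.6]{Bode6Op}, and the localisation triangle converts this back into the required statement for $\mathrm{R}\underline{\Gamma}_Z$. With this observation the whole corollary is a one-line diagram chase and no spectral sequence or induction on $\dim Z$ is needed. Your instinct that ``$\mathrm{R}\underline{\Gamma}_Z$ does not commute with truncation'' is what led you astray: it is true of $\mathrm{R}\underline{\Gamma}_Z$ alone, but irrelevant once you pass to $j_+j^!$.
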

\begin{proof}
	 If $f: X'\to X$ is smooth, and $j: U\to X'$ is the complement of a divisor, it follows from \cite[Lemma 3.5]{Bode6Op} that
	\begin{equation*}
		 j_+j^!f^!(\tau^{\geq n}\M^\bullet)\cong \tau^{\geq n} j_+j^!f^!\M^\bullet,
	\end{equation*}
	similarly for $\tau^{\leq n}$, since $f^!$, $j^!$ and $j_*$ are exact on coadmissible modules in this case, thanks to \cite[Proposition 10.1]{DcapThree}, \cite[Proposition 9.10]{Bode6Op}. Thus the result follows from \cite[Corollary 8.6]{Bode6Op}.
\end{proof}

\begin{cor}
	\label{zeroholcohom}
	A bounded $\C$-complex $\M^\bullet\in \mathrm{D}^b_{\C}(\w{\D}_X)$ is $0$-holonomic if and only if $\mathrm{H}^q(\M^\bullet)$ is $0$-holonomic for all $q$.
\end{cor}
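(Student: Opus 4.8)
The plan is to deduce this from Corollary \ref{truncatezerohol} together with a standard dévissage. The forward implication (``only if'') requires no new work: it is precisely the content of the final sentence of Corollary \ref{truncatezerohol}, which asserts that if $\M^\bullet$ is a $0$-holonomic $\C$-complex, then each $\mathrm{H}^q(\M^\bullet)$ is a $0$-holonomic $\w{\D}_X$-module.

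For the converse, suppose each cohomology group $\mathrm{H}^q(\M^\bullet)$ is $0$-holonomic. Since $\M^\bullet$ is bounded, only finitely many of these are nonzero, say in the range $a\leq q\leq b$, and I would induct on the amplitude $b-a$. Each $\mathrm{H}^q(\M^\bullet)$, regarded as a $\C$-complex concentrated in degree zero, lies in $\mathrm{D}_{0-\mathrm{hol}}(\w{\D}_X)$ by hypothesis; as this subcategory is triangulated by Lemma \ref{zeroholtriang}, it is closed under shifts, so the translate $\mathrm{H}^q(\M^\bullet)[-q]$ is $0$-holonomic as well.

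The base case $a=b$ is immediate, since then $\M^\bullet\cong \mathrm{H}^a(\M^\bullet)[-a]$. For the inductive step, I would consider the canonical truncation triangle
\[
\tau^{\leq b-1}\M^\bullet\to \M^\bullet\to \mathrm{H}^b(\M^\bullet)[-b]\to,
\]
using that $b$ is the top nonzero degree so that $\tau^{\geq b}\M^\bullet\cong \mathrm{H}^b(\M^\bullet)[-b]$. The truncation $\tau^{\leq b-1}\M^\bullet$ is again a bounded $\C$-complex whose cohomology groups are exactly the $\mathrm{H}^q(\M^\bullet)$ for $q\leq b-1$, each of which is $0$-holonomic, and its amplitude is strictly smaller; by the inductive hypothesis it is $0$-holonomic. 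The third vertex is $0$-holonomic by the previous paragraph. Since $\mathrm{D}_{0-\mathrm{hol}}(\w{\D}_X)$ is a triangulated subcategory and hence closed under cones, it follows that $\M^\bullet\in \mathrm{D}_{0-\mathrm{hol}}(\w{\D}_X)$, completing the induction.

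I do not anticipate any genuine obstacle: the entire argument is a formal consequence of the triangulated structure of $\mathrm{D}_{0-\mathrm{hol}}(\w{\D}_X)$ combined with Corollary \ref{truncatezerohol}. The one point worth flagging is that boundedness is used essentially, to guarantee finitely many nonzero cohomology groups so that the induction terminates; this is exactly why the statement is restricted to $\mathrm{D}^b_\C(\w{\D}_X)$, the unbounded case being left open as discussed in the introduction.
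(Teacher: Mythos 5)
Your proof is correct. The forward direction is indeed just the last sentence of Corollary \ref{truncatezerohol}, and your converse --- induction on the amplitude via the truncation triangle $\tau^{\leq b-1}\M^\bullet\to \M^\bullet\to \mathrm{H}^b(\M^\bullet)[-b]\to$, using that $\mathrm{D}_{0-\mathrm{hol}}(\w{\D}_X)$ is triangulated (Lemma \ref{zeroholtriang}) and hence closed under shifts and cones --- is a complete argument. The one small point you should make explicit is why $\tau^{\leq b-1}\M^\bullet$ is again a $\C$-complex (so that the inductive hypothesis applies); this follows from the characterization of bounded $\C$-complexes as bounded complexes with coadmissible cohomology, or from the truncation statement already used in Corollary \ref{truncatezerohol}.

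Your route differs from the paper's. Rather than d\'evissage, the paper checks the defining condition of $0$-holonomicity directly: for $f$ smooth and $j$ the complement of a divisor, the functors $f^!$, $j^!$, $j_+$ are exact on coadmissible modules, so $\mathrm{H}^q(j_+j^!f^!\M^\bullet)\cong j_+j^!f^!(\mathrm{H}^{q+\mathrm{dim}X'-\mathrm{dim}X}(\M^\bullet))$; if each $\mathrm{H}^q(\M^\bullet)$ is $0$-holonomic this exhibits $j_+j^!f^!\M^\bullet$ as a bounded complex with coadmissible cohomology, hence a $\C$-complex by \cite[Corollary 8.7]{Bode6Op}. The paper's computation is slightly more informative (it identifies the cohomology of the tested complex explicitly, the same identification that drives Lemma \ref{holcohom} later), while yours is more formal and reuses only the two preceding statements as black boxes; it is also the pattern that generalizes immediately to the $n$-holonomic setting once the relevant subcategories are known to be triangulated. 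Both arguments rely essentially on boundedness, as you note.
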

\begin{proof}
	One direction is given by the Corollary above. If $\M^\bullet$ is a $\C$-complex, $f:X'\to X$ is smooth and $j: U\to X'$ is the complement of a divisor, the same argument as above shows that
	\begin{equation*}
		\mathrm{H}^q(j_+j^!f^!\M^\bullet)\cong j_+j^!f^!(\mathrm{H}^{q+\mathrm{dim}X'-\mathrm{dim}X}(\M^\bullet))
	\end{equation*}
	for any $q$. In particular, if $\mathrm{H}^q(\M^\bullet)$ is $0$-holonomic for each $q$, then $j_+j^!f^!\M^\bullet$ is a bounded complex with coadmissible cohomology, and $\M^\bullet$ is $0$-holonomic by \cite[Corollary 8.7]{Bode6Op}. 
\end{proof}

See Lemma \ref{unboundedconnection} for a counterexample in the unbounded case.

We will now establish further stability properties of $0$-holonomic complexes.

\begin{prop}
\label{pullbackalongclosed}
Let $i: Z\to X$ be a Zariski closed smooth subvariety. If $\M^\bullet$ is $0$-holonomic on $X$, then $i^!\M^\bullet$ is $0$-holonomic on $Z$.
\end{prop}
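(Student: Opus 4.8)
The plan is to verify the two requirements in the definition of $0$-holonomicity for $i^!\M^\bullet$ separately: first that $i^!\M^\bullet$ is a $\C$-complex, and then that $\mathrm{R}\underline{\Gamma}_W(g^!i^!\M^\bullet)$ is a $\C$-complex for every smooth $g:Z'\to Z$ and every divisor $W\subseteq Z'$. Since both conditions are local on $Z$ (by the localisation lemma preceding Corollary \ref{truncatezerohol}), I may shrink $X$ and $Z$ freely. For the first requirement I would apply Lemma \ref{anyclosedsubvar} with $f=\mathrm{id}_X$ to the Zariski closed subvariety $Z\subseteq X$: as $\M^\bullet$ is $0$-holonomic, $\mathrm{R}\underline{\Gamma}_Z(\M^\bullet)$ is a $\C$-complex. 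Since $Z$ is smooth and $j_+j^!\M^\bullet$ is a $\C$-complex (the remark after Lemma \ref{anyclosedsubvar}), Proposition \ref{triangle}.(iii) gives $\mathrm{R}\underline{\Gamma}_Z(\M^\bullet)\cong i_+i^!\M^\bullet$. Thus $i_+i^!\M^\bullet$ is a $\C$-complex, and the Kashiwara test (Proposition \ref{Kashiwaratest}) forces $i^!\M^\bullet\in \mathrm{D}_\C(\w{\D}_Z)$.

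The second requirement I would reduce to the following key assertion: if $\N^\bullet$ is $0$-holonomic on a smooth $X'$, $i':Z'\to X'$ is a smooth Zariski closed subvariety and $W\subseteq Z'$ is Zariski closed, then $\mathrm{R}\underline{\Gamma}_W(i'^!\N^\bullet)\in \mathrm{D}_\C(\w{\D}_{Z'})$. To deduce the proposition from this I would, working locally, extend the smooth morphism $g$ to a Cartesian square with a smooth $f:X'\to X$ and a smooth closed embedding $i':Z'\to X'$ with $fi'=ig$: locally $g$ factors as an \'etale morphism followed by a projection from a relative polydisc, and both factors extend over $X$ (the projection by taking a product with $X$, the \'etale part by extending the \'etale chart over the closed subvariety). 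Functoriality of the extraordinary inverse image (Proposition \ref{compandadj}.(i)) then yields $g^!i^!\M^\bullet\cong i'^!f^!\M^\bullet$, and $f^!\M^\bullet$ is $0$-holonomic on $X'$ because $0$-holonomicity is stable under $f^!$ for smooth $f$ (as already observed). Applying the key assertion to $\N^\bullet=f^!\M^\bullet$ and the divisor $W$ completes the reduction.

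It then remains to prove the key assertion. Since $W$ is Zariski closed in $X'$, Lemma \ref{anyclosedsubvar} (again with $f=\mathrm{id}$) shows $\mathrm{R}\underline{\Gamma}_W(\N^\bullet)\in \mathrm{D}_\C(\w{\D}_{X'})$, and by construction its cohomology is supported on $W\subseteq Z'$. The Kashiwara equivalence (Theorem \ref{Ccomplexprops}.(i)) therefore presents it as $i'_+\P^\bullet$ for a $\C$-complex $\P^\bullet$ on $Z'$. The final step is to identify $\P^\bullet\cong \mathrm{R}\underline{\Gamma}_W(i'^!\N^\bullet)$, which I would obtain from the commutation $\mathrm{R}\underline{\Gamma}_W(i'_+(-))\cong i'_+\mathrm{R}\underline{\Gamma}_W(-)$ on $\C$-complexes on $Z'$ (proved by comparing the localisation triangles of Proposition \ref{triangle}.(i), using that $i'_+$ is exact and commutes with restriction to the open complement of $W$), combined with $i'_+i'^!\N^\bullet\cong \mathrm{R}\underline{\Gamma}_{Z'}(\N^\bullet)$ from Proposition \ref{triangle}.(iii) and the standard transitivity $\mathrm{R}\underline{\Gamma}_W\mathrm{R}\underline{\Gamma}_{Z'}\cong \mathrm{R}\underline{\Gamma}_W$ for $W\subseteq Z'$. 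This exhibits $\mathrm{R}\underline{\Gamma}_W(i'^!\N^\bullet)$ as a $\C$-complex, as required.

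I expect the main obstacle to be the geometric input in the reduction, namely the local extension of $g:Z'\to Z$ to a smooth $f:X'\to X$ fitting into a Cartesian square over the closed embedding $i$; once such a square is in hand, everything else is the formal interplay between Kashiwara's equivalence, the localisation triangles and Lemma \ref{anyclosedsubvar}. A secondary point demanding care is the commutation of local cohomology with the Kashiwara functors, which must be checked compatibly with the connecting maps of the triangles rather than merely termwise.
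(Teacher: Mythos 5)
Your first half (showing $i^!\M^\bullet\in\mathrm{D}_\C$ via $\mathrm{R}\underline{\Gamma}_Z(\M^\bullet)\cong i_+i^!\M^\bullet$ and the Kashiwara test of Proposition \ref{Kashiwaratest}) is exactly the paper's argument. For the second half your overall skeleton -- reduce to a statement of the form ``$i'_+(\text{the thing to be tested})$ is a $\C$-complex on an ambient space and conclude by the Kashiwara test'' -- also matches the paper, but the way you realise it is genuinely different. The paper, after trivialising the normal direction, identifies $i'_+j_+j^!f^!i^!\M^\bullet$ with $j'_+s_+s^!j'^!f'^!\M^\bullet$ purely by the composition laws of Proposition \ref{compandadj} and then exhibits it as the fibre of a morphism between two complexes that are $\C$-complexes by the remark after Lemma \ref{anyclosedsubvar}; no new commutation results are needed. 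You instead prove the local cohomology statement directly via $i'_+\mathrm{R}\underline{\Gamma}_W(i'^!\N^\bullet)\cong\mathrm{R}\underline{\Gamma}_W(i'_+i'^!\N^\bullet)\cong\mathrm{R}\underline{\Gamma}_W(\N^\bullet)$, which is arguably cleaner but requires two auxiliary facts the paper never establishes: the commutation $\mathrm{R}\underline{\Gamma}_W\circ i'_+\cong i'_+\circ\mathrm{R}\underline{\Gamma}_W$ and the transitivity $\mathrm{R}\underline{\Gamma}_W\mathrm{R}\underline{\Gamma}_{Z'}\cong\mathrm{R}\underline{\Gamma}_W$. Both are true and provable along the lines you sketch (the first by comparing the two localisation triangles, the second needing that $\underline{\Gamma}_{Z'}$ sends injectives to $\underline{\Gamma}_W$-acyclics, e.g. because $\underline{\Gamma}_{Z'}$ of a flabby sheaf is flabby), but they are extra work your route incurs and the paper's avoids.

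The one point I would not accept as written is your geometric extension step. You need, locally, a smooth $f:X'\to X$ and a closed embedding $i':Z'\to X'$ with $fi'=ig$, and you propose to get it by factoring $g$ as (\'etale)$\circ$(relative polydisc projection) and then ``extending the \'etale chart over the closed subvariety.'' Lifting an \'etale morphism of affinoids along a Zariski closed immersion of the base is not an innocuous operation in rigid geometry (the relevant pair of affinoid algebras is not Henselian in general), and as stated this is an unproved assertion sitting at the load-bearing point of the reduction. The paper sidesteps the issue entirely with Kiehl's tubular neighbourhood theorem: locally $X\cong Z\times\mathbf{B}^d$, so one simply takes $X'=Z'\times\mathbf{B}^d$ and $f=g\times\mathrm{id}$, with $i'$ the zero section -- no factorisation of $g$ and no lifting of \'etale charts is required. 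Your proof becomes complete once you replace your extension construction by this one; with your construction as proposed, there is a gap that would itself need a nontrivial argument to close.
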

\begin{proof}
We first note that $i^!\M^\bullet$ is a $\C$-complex by the following reasoning:

Note that it follows from Lemma \ref{anyclosedsubvar} that $\mathrm{R}\underline{\Gamma}_Z(\M^\bullet)$ is a $\C$-complex. Therefore $j_+j^!\M^\bullet$ is a $\C$-complex by Proposition \ref{triangle}, where $j$ denotes the embedding of the complement of $Z$ in $X$, and thus $i_+i^!\M^\bullet\cong \mathrm{R}\underline{\Gamma}_Z(\M^\bullet)$ by Proposition \ref{triangle}.(iii). Thus $i^!\M^\bullet$ is a $\C$-complex by Proposition \ref{Kashiwaratest}.

Now let $f: Z'\to Z$ be smooth, $D\subseteq Z'$ a divisor with complement $j: U\to Z'$. By Proposition \ref{triangle}, it suffices to show that $j_+j^!f^!i^!\M^\bullet$ is a $\C$-complex.

Arguing locally on $Z$, we can take a tubular neighbourhood of $Z$ in $X$ (\cite[Theorem 1.18]{Kiehl}) so that without loss of generality $X\cong Z\times \mathbf{B}^d$, where $\mathbf{B}^d=\Sp K\langle x_1, \dots, x_d\rangle$ denotes the closed unit disc of dimension $d$. We thus have the commutative diagram
\begin{equation*}
\begin{xy}
\xymatrix{
U\ar[r]^s \ar[d]_j& U\times \mathbf{B}^d\ar[d]^{j'=j\times \mathrm{id}}& Y\ar[l]_{\phi}\\
Z'\ar[d]_f\ar[r]^{i'} & Z'\times \mathbf{B}^d\ar[d]^{f'=f\times \mathrm{id}}\\
Z\ar[r]^i & Z\times \mathbf{B}^d,
}
\end{xy}
\end{equation*} 
where $Y$ is the complement of $U\cong U\times \{0\}$ in $U\times \mathbf{B}^d$.\\
By Proposition \ref{triangle} and the fact that $\M^\bullet$ is $0$-holonomic, we have a distinguished triangle of $\C$-complexes
\begin{equation*}
s_+s^!j'^!f'^!\M^\bullet\to j'^!f'^!\M^\bullet\to \phi_+\phi^!j'^!f'^!\M^\bullet\to,
\end{equation*}
and hence a distinguished triangle
\begin{equation*}
j'_+s_+s^!j'^!f'^!\M^\bullet\to j'_+j'^!f'^!\M^\bullet\to j'_+\phi_+\phi^!j'^!f'^!\M^\bullet\to.
\end{equation*}
But both the second and the third term are $\C$-complexes on $Z'\times\mathbf{B}^d$ by the remark after Lemma \ref{anyclosedsubvar}, since $\M^\bullet$ is $0$-holonomic. Thus
\begin{equation*}
i'_+j_+j^!f^!i^!\M^\bullet\cong j'_+s_+s^!j'^!f'^!\M^\bullet
\end{equation*}
is a $\C$-complex, where the isomorphism above is a consequence of the composition results in Proposition \ref{compandadj}. It follows by Proposition \ref{Kashiwaratest} that $j_+j^!f^!i^!\M^\bullet$ is a $\C$-complex, as required.
\end{proof}

\begin{cor}
	\label{zeroholinv}
	Let $f: X\to Y$ be a morphism between smooth rigid analytic varieties. Then $f^!$ preserves $0$-holonomicity.
\end{cor}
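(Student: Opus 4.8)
The plan is to factor $f$ through its graph and reduce to the two stability results already in hand: stability of $0$-holonomicity under $!$-pullback along smooth morphisms (established in the discussion preceding the gluing lemma above) and along closed embeddings of smooth subvarieties (Proposition \ref{pullbackalongclosed}). The only genuinely geometric input is making the graph a \emph{closed} embedding, which requires the target to be separated; everything else is formal bookkeeping with the functoriality $(gf)^!\cong f^!g^!$ of Proposition \ref{compandadj}.(i).

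First I would reduce to a local situation in which the target is affinoid. Since $0$-holonomicity can be tested on an admissible covering of $X$, and since $f^!$ commutes with restriction to admissible opens — a formal consequence of Proposition \ref{compandadj}.(i), because $!$-pullback along an open immersion is just restriction — I choose an admissible affinoid covering $\{Y_j\}$ of $Y$ and then an admissible affinoid covering $\{X_i\}$ of $X$ subordinate to $\{f^{-1}Y_j\}$, so that $f(X_i)\subseteq Y_{j(i)}$ for a suitable index $j(i)$. Writing $f_i\colon X_i\to Y_{j(i)}$ for the restriction, Proposition \ref{compandadj}.(i) gives $(f^!\M^\bullet)|_{X_i}\cong f_i^!(\M^\bullet|_{Y_{j(i)}})$, and $\M^\bullet|_{Y_{j(i)}}$ is again $0$-holonomic by stability under smooth (here, open) pullback. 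It therefore suffices to treat the case where $Y$ is affinoid, hence separated.

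Assuming now $Y$ separated, the graph $\Gamma_f\colon X\to X\times Y$ is a Zariski closed embedding, being the base change of the diagonal $\Delta_Y$ along $f\times\mathrm{id}\colon X\times Y\to Y\times Y$; its image $\Gamma_f(X)\cong X$ is smooth, $X\times Y$ is smooth, and the projection $p_2\colon X\times Y\to Y$ is smooth. Factoring $f=p_2\circ\Gamma_f$, Proposition \ref{compandadj}.(i) yields $f^!\cong\Gamma_f^!\,p_2^!$. Here $p_2^!$ preserves $0$-holonomicity because $p_2$ is smooth, and $\Gamma_f^!$ preserves $0$-holonomicity by Proposition \ref{pullbackalongclosed}, since $\Gamma_f$ is a closed embedding of a smooth subvariety. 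Composing the two, $f^!$ preserves $0$-holonomicity, and combining with the local reduction above completes the proof.

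The main obstacle is precisely the separatedness issue: the graph construction only produces a closed immersion when the target is separated, and the ambient paper assumes varieties are merely quasi-separated. This is exactly what the initial reduction to affinoid (hence separated) $Y$ secures, and once that is in place the argument is a purely formal concatenation of the smooth and closed-embedding cases.
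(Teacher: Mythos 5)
Your proof is correct and follows essentially the same route as the paper: the paper's own argument is the one-line observation that $f$ factors as a closed embedding followed by a smooth morphism, combined with $(gf)^!\cong f^!g^!$, the smooth case (immediate from the definition), and Proposition \ref{pullbackalongclosed}. Your additional reduction to affinoid target to guarantee that the graph is a Zariski closed immersion is a legitimate piece of care that the paper leaves implicit, but it does not change the substance of the argument.
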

\begin{proof}
	We can write $f$ as the composition of a closed embedding and a smooth morphism between smooth varieties. By the composition law for inverse images (\cite[Proposition 7.5]{Bode6Op}), the result follows from the definition of $0$-holonomicity (in the smooth case) and Proposition \ref{pullbackalongclosed} (in the closed case).
\end{proof}

\subsection{Base change}

\begin{thm}
\label{basechange}
Consider the Cartesian diagram
\begin{equation*}
\begin{xy}
\xymatrix{
Y\times_X Z\ar[r]^{\widetilde{g}} \ar[d]_{\widetilde{f}}& Y\ar[d]^f\\
Z \ar[r]^g& X
}
\end{xy}
\end{equation*} 
where we assume that all spaces involved are smooth rigid analytic $K$-varieties, and that $f$ is projective.

Let $\M^\bullet\in \mathrm{D}_\C(\w{\D}_Y)$ be a $0$-holonomic $\C$-complex with the property that $g^!f_+\M^\bullet$ is a $\C$-complex.

Then
\begin{equation*}
g^!f_+\M^\bullet\cong \widetilde{f}_+\widetilde{g}^!\M^\bullet.
\end{equation*}
\end{thm}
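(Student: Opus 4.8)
The plan is to keep $f$ projective throughout and instead factor the base $g$, reducing the statement to two manageable situations: base change along a smooth projection (handled by the exterior product formalism of Section 3) and base change along a closed embedding (handled by Kashiwara's equivalence together with the excision triangle). First I would factor $g: Z\to X$ through its graph as $g=q\circ \gamma$, where $\gamma: Z\to Z\times X$ is the graph embedding (a smooth closed subvariety, arguing locally so that $X$ is separated) and $q=\mathrm{pr}_X: Z\times X\to X$ is the smooth projection. This produces two stacked Cartesian squares: the base change of $f$ along $q$ has apex $Z\times Y$, with vertical map $\mathrm{id}_Z\times f$ and horizontal map $\tilde q=\mathrm{pr}_Y$; the base change of $\mathrm{id}_Z\times f$ along $\gamma$ has apex $Y\times_X Z$, with the closed embedding $\tilde\gamma$ into $Z\times Y$ and the projective map $\tilde f$ down to $Z$. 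Since $q\gamma=g$ and $\tilde q\tilde\gamma=\tilde g$, composing the two base-change isomorphisms via Proposition~\ref{compandadj}.(i) gives the claim.

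For the smooth projection I would prove $q^!f_+\M^\bullet\cong (\mathrm{id}_Z\times f)_+\tilde q^!\M^\bullet$. Here $f_+\M^\bullet$ is a $\C$-complex by Theorem~\ref{Ccomplexprops}.(iii), and both $q^!$ and $\tilde q^!$ are computed as $\O_Z\boxtimes(-)$ up to shift by Corollary~\ref{boxwithO}; the desired isomorphism is then exactly Proposition~\ref{boxdirect}, applied with the projective morphism $f$ and the bounded complex $\O_Z\in \mathrm{D}^b_\C(\w{\D}_Z)$. Note that $0$-holonomicity is not needed at this step. Writing $\N^\bullet:=\tilde q^!\M^\bullet$, which is again $0$-holonomic by Corollary~\ref{zeroholinv}, it remains to treat the closed embedding $\gamma$, i.e. to show $\gamma^!(\mathrm{id}_Z\times f)_+\N^\bullet\cong \tilde f_+\tilde\gamma^!\N^\bullet$.

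For this, set $h=\mathrm{id}_Z\times f$ and $\P^\bullet:=h_+\N^\bullet=q^!f_+\M^\bullet$. The preimage $\tilde Z=Y\times_X Z$ is a smooth closed subvariety of $Z\times Y$ (smoothness is a hypothesis); let $j'$ be its open complement and $j$ the open complement of $\gamma(Z)$ in $Z\times X$, with $h_U: U'\to U$ the induced projective morphism of complements. Since $\N^\bullet$ is $0$-holonomic, Lemma~\ref{anyclosedsubvar} and the remark following it show $j'_+j'^!\N^\bullet\in\mathrm{D}_\C$, hence by excision (Proposition~\ref{triangle}) and Proposition~\ref{Kashiwaratest} that $\tilde\gamma^!\N^\bullet\cong \tilde g^!\M^\bullet$ is a $\C$-complex; thus $\tilde f_+\tilde g^!\M^\bullet$ is defined and lies in $\mathrm{D}_\C$. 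Now I would compare two distinguished triangles with common middle term $\P^\bullet$: the excision triangle of $\P^\bullet$ along $\gamma(Z)$ from Proposition~\ref{triangle}.(i), whose third term $j_+j^!\P^\bullet$ I rewrite as $j_+h_{U,+}j'^!\N^\bullet$ using open base change; and the image under $h_+$ of the excision triangle of $\N^\bullet$ along $\tilde Z$, whose outer term is $\gamma_+\tilde f_+\tilde\gamma^!\N^\bullet=\gamma_+(\tilde f_+\tilde g^!\M^\bullet)$ by $h\tilde\gamma=\gamma\tilde f$ and Proposition~\ref{compandadj}.(ii), and whose third term is again $j_+h_{U,+}j'^!\N^\bullet$. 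Matching third terms identifies the outer terms, so $\mathrm{R}\underline{\Gamma}_{\gamma(Z)}(\P^\bullet)\cong \gamma_+(\tilde f_+\tilde g^!\M^\bullet)$; on the other hand the hypothesis that $g^!f_+\M^\bullet=\gamma^!\P^\bullet$ is a $\C$-complex lets me invoke Proposition~\ref{triangle}.(iii) to get $\mathrm{R}\underline{\Gamma}_{\gamma(Z)}(\P^\bullet)\cong \gamma_+(g^!f_+\M^\bullet)$. Hence $\gamma_+(g^!f_+\M^\bullet)\cong \gamma_+(\tilde f_+\tilde g^!\M^\bullet)$, and full faithfulness of $\gamma_+$ (Kashiwara, Theorem~\ref{Ccomplexprops}.(i)) yields $g^!f_+\M^\bullet\cong \tilde f_+\tilde g^!\M^\bullet$.

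The main obstacle is the comparison of the two excision triangles in the last step. It rests on base change along the open immersion $j$, i.e. the isomorphism $j^!\P^\bullet\cong h_{U,+}j'^!\N^\bullet$, and — more delicately — on checking that this isomorphism intertwines the canonical localization maps out of the common middle term $\P^\bullet$, so that one genuinely obtains a morphism of distinguished triangles and may conclude the outer terms are isomorphic. The underlying assertion that localization commutes with the projective direct image is formal (open restriction commutes with $\mathrm{R}h_*$ and with the local transfer-bimodule tensor product), but arranging the compatibility of the comparison maps is the fiddly heart of the argument; since the conclusion is local on $Z$ and $X$, I would reduce to the separated (indeed affinoid) situation, where $\gamma$ is genuinely a closed embedding and all the intermediate $\C$-complex conditions are controlled by the $0$-holonomicity of $\N^\bullet$ together with the hypothesis on $g^!f_+\M^\bullet$.
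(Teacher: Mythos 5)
Your proposal is correct and follows essentially the same route as the paper: factor $g$ through its graph embedding into the product, handle the smooth projection via Corollary \ref{boxwithO} and Proposition \ref{boxdirect}, reduce to a genuine closed embedding by restricting to an admissible open, and then compare the pushed-forward excision triangle of $\widetilde{g}^!\M^\bullet$ with the localisation triangle of $f_+\M^\bullet$ before concluding by Kashiwara's equivalence. The compatibility of localisation maps that you flag as the delicate point is handled in the paper exactly as you suggest, by identifying the canonical map $f_+\M^\bullet\to j_+j^!f_+\M^\bullet$ with the image of the unit map under the open base-change isomorphism.
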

\begin{proof}
Consider the diagram
\begin{equation*}
\begin{xy}
\xymatrix{
Y\times_X Z \ar[r] \ar[d] & Y\times Z\ar[r]^{\widetilde{p}} \ar[d] & Y\ar[d]^f\\
Z\ar[r]^i & X\times Z\ar[r]^p & X
}
\end{xy}
\end{equation*}
where $\widetilde{p}: Y\times Z\to Y$ and $p: X\times Z\to X$ denote the natural projections, and $i: Z\to X\times Z$ is the graph embedding of $g$.

By smoothness of the projection maps, Theorem \ref{Ccomplexprops} implies that $\widetilde{p}^!\M^\bullet$ and $p^!f_+\M^\bullet$ are $\C$-complexes. It then follows from Corollary \ref{boxwithO} and Proposition \ref{boxdirect} that
\begin{align*}
(f\times \mathrm{id}_Z)_+\widetilde{p}^!\M^\bullet&=(f\times \mathrm{id}_Z)_+(\M^\bullet\boxtimes \O_Z)[\mathrm{dim} Z]\\
& \cong f_+\M^\bullet\boxtimes \O_Z[\mathrm{dim} Z]\\
& \cong p^!f_+\M^\bullet.
\end{align*}
Now note that $\widetilde{p}^!\M^\bullet$ is a $0$-holonomic $\C$-complex satisfying the same assumptions as $\M^\bullet$ itself, so it now suffices to consider the graph morphism $i: Z\to X\times Z$, due to the composition result Proposition \ref{compandadj}. 

Let $U\subseteq X\times Z$ be an admissible open such that the graph embedding factors through a closed embedding $Z\to U$. Consider the Cartesian diagram
\begin{equation*}
	\begin{xy}
		\xymatrix{
		Y\times_XZ\ar[r]\ar[d]& (Y\times Z)\times_{X\times Z}U\ar[r]^{\widetilde{j}}\ar[d]^{f'}& Y\times Z\ar[d]^{f\times \mathrm{id}}\\
		Z\ar[r]^{i'}&U\ar[r]^j &X\times Z,}
	\end{xy}
\end{equation*}
where $i=ji'$.

Now
\begin{align*}
	j^!(f\times \mathrm{id})_+\M^\bullet&\cong ((f\times \mathrm{id})_+\M^\bullet)|_U\\
	&\cong f'_+(\M^\bullet|_{(f\times \mathrm{id})^{-1}U})\\
	&\cong f'_+j'^!\M^\bullet.
\end{align*}
In particular, $j'^!\M^\bullet$ is a $0$-holonomic $\C$-complex on $Y\times_X U$ satisfying the assumptions of the Theorem, so we have reduced to the case where $g=i: Z\to X$ is a closed embedding.

Assume from now on that $g=i: Z\to X$ is a closed embedding. Let $j: U=X\setminus Z\to X$ be its complement, and let $\widetilde{j}: V=U\times_X Y\to Y$ be the embedding of its preimage in $Y$.

\begin{equation*}
\begin{xy}
\xymatrix{
Y\times_X Z\ar[r]^{\widetilde{i}}\ar[d]^{\widetilde{f}} & Y\ar[d]^f & V=Y\times_X U\ar[d]^{f'}\ar[l]_{\widetilde{j}}\\
Z\ar[r]_i & X& U \ar[l]^j
}
\end{xy}
\end{equation*}
By assumption, $\M^\bullet$ is $0$-holonomic on $Y$, so we have a distinguished triangle
\begin{equation*}
\widetilde{i}_+\widetilde{i}^!\M^\bullet\to \M^\bullet\to \widetilde{j}_+\widetilde{j}^!\M^\bullet\to
\end{equation*}
of $\C$-complexes on $Y$, and applying $f_+$ yields the distinguished triangle
\begin{equation*}
f_+\widetilde{i}_+\widetilde{i}^!\M^\bullet\to f_+\M^\bullet\to f_+\widetilde{j}_+\widetilde{j}^!\M^\bullet\to.
\end{equation*}
The composition results from Proposition \ref{compandadj} imply that the first term is isomorphic to $i_+\widetilde{f}_+\widetilde{i}^!\M^\bullet$, while the third is as before
\begin{align*}
f_+\widetilde{j}_+\widetilde{j}^!\M^\bullet&\cong j_+f'_+\widetilde{j}^!\M^\bullet\\
&\cong \mathrm{R}j_*(f'_+(\M^\bullet|_V))\\
&\cong \mathrm{R}j_*((f_+\M^\bullet)|_U)\\
&\cong j_+j^!f_+\M^\bullet.
\end{align*}
In particular, since $\widetilde{f}_+\widetilde{i}^!\M^\bullet$ is a $\C$-complex by Corollary \ref{zeroholinv} and Theorem \ref{Ccomplexprops}.(iii), so is $i_+\widetilde{f}_+\widetilde{i}^!\M^\bullet$, and thus $j_+j^!f_+\M^\bullet$ is also a $\C$-complex. These isomorphisms also identify the triangle above with the localisation triangle
\begin{equation*}
i_+i^!f_+\M^\bullet\to f_+\M^\bullet\to j_+j^!f_+\M^\bullet\to
\end{equation*}
from Proposition \ref{triangle}. 

Therefore, $\widetilde{f}_+\widetilde{i}^!\M^\bullet$ and $i^!f_+\M^\bullet$ are two $\C$-complexes on $Z$ which become isomorphic after applying $i_+$, so they are indeed isomorphic by Theorem \ref{Ccomplexprops}.(i).

\end{proof}
\begin{prop}
\label{zeroholproj}
Let $g: X\to Y$ be a projective morphism between smooth rigid analytic spaces. If $\M^\bullet\in \mathrm{D}(\w{\D}_X)$ is $0$-holonomic, then $g_+\M^\bullet$ is $0$-holonomic.
\end{prop}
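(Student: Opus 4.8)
The plan is to reduce, via the projective base change theorem, to the commutation of local cohomology with projective direct image.

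First I would record that $g_+\M^\bullet$ is a $\C$-complex by Theorem \ref{Ccomplexprops}.(iii). To verify $0$-holonomicity, fix a smooth morphism $h: Y'\to Y$ and a divisor $Z\subseteq Y'$; the goal is to show $\mathrm{R}\underline{\Gamma}_Z(h^!g_+\M^\bullet)\in \mathrm{D}_\C(\w{\D}_{Y'})$. Form the Cartesian square with $X'=X\times_Y Y'$, projection $g':X'\to Y'$ (projective, as a base change of $g$) and $\widetilde{h}:X'\to X$ (smooth, as a base change of $h$); all four spaces are then smooth. Since $h^!g_+\M^\bullet$ is a $\C$-complex ($g_+\M^\bullet$ is a $\C$-complex and $h$ is smooth, by Theorem \ref{Ccomplexprops}.(ii)) and $\M^\bullet$ is $0$-holonomic, Theorem \ref{basechange} applies with the roles of the projective and the base-change morphisms played by $g$ and $h$ respectively, yielding $h^!g_+\M^\bullet\cong g'_+\widetilde{h}^!\M^\bullet$.

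Next I would observe that $\N^\bullet:=\widetilde{h}^!\M^\bullet$ is $0$-holonomic on $X'$ by Corollary \ref{zeroholinv}. The preimage $g'^{-1}Z$ is Zariski closed in $X'$, so $\mathrm{R}\underline{\Gamma}_{g'^{-1}Z}(\N^\bullet)$ is a $\C$-complex by Lemma \ref{anyclosedsubvar}, and hence so is $g'_+\mathrm{R}\underline{\Gamma}_{g'^{-1}Z}(\N^\bullet)$ by Theorem \ref{Ccomplexprops}.(iii). It therefore suffices to establish a natural isomorphism $\mathrm{R}\underline{\Gamma}_Z(g'_+\N^\bullet)\cong g'_+\mathrm{R}\underline{\Gamma}_{g'^{-1}Z}(\N^\bullet)$. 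For this commutation I would compare the two localisation triangles from Proposition \ref{triangle}.(i): the one for $g'_+\N^\bullet$ along $Z$, and the image under $g'_+$ of the one for $\N^\bullet$ along $g'^{-1}Z$. Writing $j:W=Y'\setminus Z\to Y'$ and $j':W'=g'^{-1}W\to X'$ for the open complements and $g'':W'\to W$ for the (projective) restriction of $g'$, the two triangles share the vertex $g'_+\N^\bullet$, so it is enough to identify their third terms $j_+j^!g'_+\N^\bullet$ and $g'_+j'_+j'^!\N^\bullet$ compatibly with the canonical maps out of $g'_+\N^\bullet$. Here $j^!g'_+\N^\bullet=(g'_+\N^\bullet)|_W\cong g''_+(\N^\bullet|_{W'})=g''_+j'^!\N^\bullet$, since restriction to an open subset commutes with projective direct image (exactly as in the proof of Theorem \ref{basechange}), and then $j_+g''_+j'^!\N^\bullet\cong g'_+j'_+j'^!\N^\bullet$ because $jg''=g'j'$, using the composition results of Proposition \ref{compandadj}.

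I expect the main obstacle to be precisely this last commutation step: matching the two localisation triangles requires the base change identity along the open immersion together with the composition isomorphisms for direct images, and one must check that these identifications are compatible with the canonical morphisms out of $g'_+\N^\bullet$, so as to obtain an isomorphism of distinguished triangles and hence of their first terms. Once this is in place, the combination of Theorem \ref{basechange}, Lemma \ref{anyclosedsubvar} and Theorem \ref{Ccomplexprops}.(iii) gives the result formally.
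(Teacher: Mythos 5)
Your proof is correct and follows essentially the same route as the paper: projective base change (Theorem \ref{basechange}) reduces everything to identifying $j_+j^!g'_+\N^\bullet$ with $g'_+j'_+j'^!\N^\bullet$ via the commutation of open restriction with projective direct image, and one then concludes with the localisation triangle. The only difference is that you insist on proving the full commutation isomorphism $\mathrm{R}\underline{\Gamma}_Z(g'_+\N^\bullet)\cong g'_+\mathrm{R}\underline{\Gamma}_{g'^{-1}Z}(\N^\bullet)$, including the compatibility of the maps out of $g'_+\N^\bullet$ that you flag as the main obstacle; this compatibility is not actually needed, since once the third vertex $j_+j^!g'_+\N^\bullet$ of the localisation triangle from Proposition \ref{triangle}.(i) is known to be a $\C$-complex \emph{as an object}, the first vertex $\mathrm{R}\underline{\Gamma}_Z(g'_+\N^\bullet)$ is automatically one as well because $\mathrm{D}_\C$ is a triangulated subcategory --- which is exactly how the paper concludes.
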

\begin{proof}
Let $f: Y'\to Y$ be a smooth morphism, and let $U\subseteq Y'$ be the complement of a divisor $Z$. By setting $X'=Y'\times_YX$, $U'=X\times_Y U$, we obtain the following Cartesian squares
\begin{equation*}
\begin{xy}
\xymatrix{
U'\ar[r]^{\widetilde{g}} \ar[d]_{\widetilde{j}} & U\ar[d]^j\\
X'\ar[d]_{\widetilde{f}} \ar[r]^{g'} & Y'\ar[d]^f\\
X\ar[r]_g & Y.
}
\end{xy}
\end{equation*}
Note that by Theorem \ref{Ccomplexprops}.(iii), $g_+\M^\bullet$ is a $\C$-complex on $Y$, and since $\M^\bullet$ is $0$-holonomic, we have in the same way that $\widetilde{j}^!\widetilde{f}^!\M^\bullet$, $\widetilde{g}_+\widetilde{j}^!\widetilde{f}^!\M^\bullet$ and $j^!f^!g_+\M^\bullet$ are $\C$-complexes. It thus follows from Theorem \ref{basechange} that 
\begin{equation*}
j^!f^!g_+\M^\bullet\cong \widetilde{g}_+\widetilde{j}^!\widetilde{f}^!\M^\bullet.
\end{equation*} 
In particular, $j_+j^!f^!g_+\M^\bullet\cong j_+\widetilde{g}_+\widetilde{j}^!\widetilde{f}^!\M^\bullet\cong g'_+\widetilde{j}_+\widetilde{j}^!\widetilde{f}^!\M^\bullet$ by commutativity of the top square. As $g'_+$ preserves $\C$-complexes by Theorem \ref{Ccomplexprops}.(iii), this is a $\C$-complex, since $\widetilde{j}_+\widetilde{j}^!\widetilde{f}^!\M^\bullet\in \mathrm{D}_{\C}(\w{\D}_{X'})$ by virtue of $\M^\bullet$ being $0$-holonomic. 

Having shown that $j_+j^!f^!g_+\M^\bullet\in \mathrm{D}_{\C}(\w{\D}_{Y'})$, Proposition \ref{triangle} implies that $\mathrm{R}\underline{\Gamma}_Z(f^!g_+\M^\bullet)$ is a $\C$-complex as well. Thus $g_+\M^\bullet$ is indeed $0$-holonomic.
\end{proof}

\begin{cor}
Let $i: Z\to X$ be a Zariski closed smooth subvariety and let $\M^\bullet\in \mathrm{D}(\w{\D}_Z)$. Then $\M^\bullet$ is $0$-holonomic if and only if $i_+\M^\bullet$ is $0$-holonomic.
\end{cor}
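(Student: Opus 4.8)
The plan is to prove the two implications separately, both reducing to results already established.

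For the forward implication, I would observe that any Zariski closed embedding is projective. Hence, if $\M^\bullet$ is $0$-holonomic on $Z$, then $i_+\M^\bullet$ is $0$-holonomic by Proposition \ref{zeroholproj}, applied to $g=i$.

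For the converse, the strategy is to combine the Kashiwara equivalence of Theorem \ref{Ccomplexprops}.(i) with the stability of $0$-holonomicity under $i^!$. Suppose $i_+\M^\bullet$ is $0$-holonomic; in particular it is a $\C$-complex, so Proposition \ref{Kashiwaratest} ensures that $\M^\bullet$ is itself a $\C$-complex on $Z$. Since the cohomology groups of $i_+\M^\bullet$ are supported on $Z$ by construction, Theorem \ref{Ccomplexprops}.(i) yields a natural isomorphism $i^!i_+\M^\bullet\cong \M^\bullet$. Now $i$ is in particular a morphism of smooth varieties, so by Corollary \ref{zeroholinv} the functor $i^!$ sends $0$-holonomic complexes to $0$-holonomic complexes. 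Applying this to the $0$-holonomic complex $i_+\M^\bullet$ shows that $\M^\bullet\cong i^!i_+\M^\bullet$ is $0$-holonomic.

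I expect no serious obstacle: both directions are formal consequences of the stability statements (Proposition \ref{zeroholproj} and Corollary \ref{zeroholinv}) together with the Kashiwara equivalence. The only point requiring a moment's care is the identification $i^!i_+\cong \mathrm{id}$, which relies on $i_+\M^\bullet$ having cohomology supported on $Z$; this is exactly the setting in which the equivalence of Theorem \ref{Ccomplexprops}.(i) applies, and it is what allows one to recover $\M^\bullet$ from $i_+\M^\bullet$.
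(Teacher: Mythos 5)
Your proof is correct and follows essentially the same route as the paper: the forward direction via Proposition \ref{zeroholproj}, and the converse via Proposition \ref{Kashiwaratest}, the Kashiwara equivalence of Theorem \ref{Ccomplexprops}.(i), and stability of $0$-holonomicity under $i^!$ (the paper cites Proposition \ref{pullbackalongclosed} directly rather than its consequence Corollary \ref{zeroholinv}, but this is an immaterial difference).
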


\begin{proof}
Suppose that $i_+\M^\bullet$ is $0$-holonomic. Then by Proposition \ref{Kashiwaratest}, $\M^\bullet$ is a $\C$-complex, and $\M^\bullet\cong i^!i_+\M^\bullet$ by Theorem \ref{Ccomplexprops}.(i). Hence $\M^\bullet$ is $0$-holonomic by Proposition \ref{pullbackalongclosed}.\\
\\
Conversely, if $\M^\bullet$ is $0$-holonomic, then it follows from Proposition \ref{zeroholproj} that $i_+\M^\bullet$ is $0$-holonomic.
\end{proof}

\begin{cor}
	\label{zerohollocal}
Let $Z$ be a Zariski closed subvariety of a smooth rigid analytic variety $X$, with embedding $i: Z\to X$. Let $j: U\to X$ denote its complement. If $\M^\bullet$ is $0$-holonomic on $X$, then $\mathrm{R}\underline{\Gamma}_Z(\M^\bullet)$ and $j_+j^!\M^\bullet$ are $0$-holonomic.

If $Z$ is smooth, then $\mathrm{R}\underline{\Gamma}_Z(\M^\bullet)\cong i_+i^!\M^\bullet$.  
\end{cor}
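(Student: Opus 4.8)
\emph{Proof proposal.}
The plan is to deduce the whole corollary from the localisation triangle of Proposition~\ref{triangle}.(i) together with the fact that $\mathrm{D}_{0-\mathrm{hol}}(\w{\D}_X)$ is a triangulated subcategory (Lemma~\ref{zeroholtriang}). First I would dispose of the final claim: if $Z$ is smooth, then $\M^\bullet$ is a $\C$-complex (being $0$-holonomic) and $j_+j^!\M^\bullet=j_+(\M^\bullet|_U)$ is a $\C$-complex by the remark following Lemma~\ref{anyclosedsubvar}, so the hypotheses of Proposition~\ref{triangle}.(iii) are met and the isomorphism $\mathrm{R}\underline{\Gamma}_Z(\M^\bullet)\cong i_+i^!\M^\bullet$ follows at once.

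For the first statement, recall that $\mathrm{R}\underline{\Gamma}_Z(\M^\bullet)$ and $j_+j^!\M^\bullet$ are $\C$-complexes (Lemma~\ref{anyclosedsubvar} with $f=\mathrm{id}$, and the remark following it). By the distinguished triangle $\mathrm{R}\underline{\Gamma}_Z(\M^\bullet)\to \M^\bullet\to j_+j^!\M^\bullet\to$ and Lemma~\ref{zeroholtriang}, since $\M^\bullet$ is $0$-holonomic it suffices to prove that one of the two outer terms is $0$-holonomic, say $\mathrm{R}\underline{\Gamma}_Z(\M^\bullet)$; the other then follows automatically. To check $0$-holonomicity I would fix a smooth morphism $f:X'\to X$ and a divisor $D\subseteq X'$ and show $\mathrm{R}\underline{\Gamma}_D(f^!\mathrm{R}\underline{\Gamma}_Z(\M^\bullet))\in\mathrm{D}_\C(\w{\D}_{X'})$, using two commutation isomorphisms: (a) a smooth base change $f^!\mathrm{R}\underline{\Gamma}_Z(\M^\bullet)\cong \mathrm{R}\underline{\Gamma}_{f^{-1}Z}(f^!\M^\bullet)$, and (b) the composition law $\mathrm{R}\underline{\Gamma}_D\mathrm{R}\underline{\Gamma}_{f^{-1}Z}\cong \mathrm{R}\underline{\Gamma}_{D\cap f^{-1}Z}$. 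Since $f^!\M^\bullet$ is again $0$-holonomic by Corollary~\ref{zeroholinv}, combining (a) and (b) yields
\begin{equation*}
\mathrm{R}\underline{\Gamma}_D(f^!\mathrm{R}\underline{\Gamma}_Z(\M^\bullet))\cong \mathrm{R}\underline{\Gamma}_{D\cap f^{-1}Z}(f^!\M^\bullet),
\end{equation*}
which lies in $\mathrm{D}_\C(\w{\D}_{X'})$ by Lemma~\ref{anyclosedsubvar}, as $D\cap f^{-1}Z$ is a Zariski closed subvariety of $X'$. This gives that $\mathrm{R}\underline{\Gamma}_Z(\M^\bullet)$ is $0$-holonomic, and then so is $j_+j^!\M^\bullet$.

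The main obstacle is the smooth base change (a); fact (b) is standard, following from the evident identity $\underline{\Gamma}_D\underline{\Gamma}_{f^{-1}Z}=\underline{\Gamma}_{D\cap f^{-1}Z}$ on the underlying left exact functors and the flabbiness of injectives (Lemma~\ref{injisflabby}), which ensures that $\mathrm{R}\underline{\Gamma}_{f^{-1}Z}$ sends injectives to $\underline{\Gamma}_D$-acyclic objects. To establish (a) I would apply $f^!$ to the localisation triangle of Proposition~\ref{triangle}.(i) and identify it, via the canonical maps out of $f^!\M^\bullet$, with the localisation triangle attached to $f^{-1}Z\subseteq X'$; this reduces (a) to the base change $f^!(j_+j^!\M^\bullet)\cong j'_+j'^!(f^!\M^\bullet)$ for the open embedding $j$, where $j'$ is the complement of $f^{-1}Z$. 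Writing $j_+j^!\M^\bullet=\mathrm{R}j_*(\M^\bullet|_U)$ and using that $f^!$ is, up to shift, the flat pullback $\O_{X'}\widetilde{\otimes}^\mathbb{L}_{f^{-1}\O_X}f^{-1}(-)$, this last isomorphism in turn reduces to topological base change $f^{-1}\mathrm{R}j_*\cong \mathrm{R}j'_*\widetilde{f}^{-1}$ along the open immersion together with the projection formula for $j'$, the same ingredients already invoked in the proof of Proposition~\ref{triangle}. I expect this verification to be routine but mildly tedious, and it is the only step here that is not purely formal.
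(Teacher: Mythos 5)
Your reduction of the corollary to the $0$-holonomicity of one of the two outer terms of the localisation triangle is fine, as is your treatment of the smooth-$Z$ isomorphism via Proposition \ref{triangle}.(iii). The problem is your step (a), the smooth base change $f^!\mathrm{R}\underline{\Gamma}_Z(\M^\bullet)\cong \mathrm{R}\underline{\Gamma}_{f^{-1}Z}(f^!\M^\bullet)$, equivalently $f^!\mathrm{R}j_*(\M^\bullet|_U)\cong \mathrm{R}j'_*(\widetilde{f}^!\M^\bullet|_{f^{-1}U})$. This is not established anywhere in the paper, and it is not the routine verification you suggest: $\mathrm{R}j_*$ for a Zariski-open immersion of rigid spaces is computed by \v{C}ech complexes over admissible coverings and produces Fr\'echet-type limits, while $f^!$ for a smooth morphism of positive relative dimension involves a completed tensor product with an $\O$-module of infinite rank (locally $\O_X\widetilde{\otimes}_K K\langle x_1,\dots,x_d\rangle$). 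Commuting these requires nuclearity/countability arguments of the kind the paper invokes elsewhere, and the claimed "topological base change" $f^{-1}\mathrm{R}j_*\cong \mathrm{R}j'_*\widetilde{f}^{-1}$ is itself unclear for non-open $f$ (e.g. a projection $X\times\mathbf{B}^1\to X$), since coverings of $f^{-1}U$ are not pulled back from $U$. Note that every base change result the paper does prove (Proposition \ref{triangle}.(ii), Proposition \ref{boxdirect}, Theorem \ref{basechange}) either exploits a perfect Koszul resolution or is restricted to \emph{projective} morphisms and $0$-holonomic inputs, precisely because such commutations are delicate here. So as written your argument has a genuine gap at its central step. (Your step (b), the composition $\mathrm{R}\underline{\Gamma}_D\circ\mathrm{R}\underline{\Gamma}_{W}\cong\mathrm{R}\underline{\Gamma}_{D\cap W}$, also needs the observation that $\underline{\Gamma}_W$ of a flabby object is flabby and that flabby objects are $\underline{\Gamma}_D$-acyclic in this category; this is believable but should be checked, since the paper only proves that injectives are flabby.)

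The paper's own proof deliberately routes around the missing base change. It first handles smooth $Z$, where $\mathrm{R}\underline{\Gamma}_Z(\M^\bullet)\cong i_+i^!\M^\bullet$ is $0$-holonomic by Corollary \ref{zeroholinv} and Proposition \ref{zeroholproj}; then snc divisors via the Mayer--Vietoris triangle of Proposition \ref{MVforDcap}; and finally general $Z$ by Temkin's resolution of singularities, writing $j_+j^!\M^\bullet\cong f_+\widetilde{j}_+\widetilde{j}^!f^!\M^\bullet$ for a projective $f$ and concluding with Proposition \ref{zeroholproj}. All the analytic difficulty is thereby concentrated in the already-proved projective base change (Theorem \ref{basechange}). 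If you want to salvage your approach, you would need to either prove the smooth base change for $\mathrm{R}j_*$ directly (a nontrivial addition) or restructure along the paper's lines.
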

\begin{proof}
By Lemma \ref{anyclosedsubvar}, $\mathrm{R}\underline{\Gamma}_Z(\M^\bullet)$ is a $\C$-complex, so $j_+j^!\M^\bullet$ is also a $\C$-complex by Proposition \ref{triangle}. If $Z$ is smooth, then Proposition \ref{triangle} implies also that $\mathrm{R}\underline{\Gamma}_Z(\M^\bullet)\cong i_+i^!\M^\bullet$, which is in particular $0$-holonomic by Corollary \ref{zeroholinv} and Proposition \ref{zeroholproj}. The distinguished triangle then implies that $j_+j^!\M^\bullet$ is also $0$-holonomic.

The Mayer-Vietoris sequence in Proposition \ref{MVforDcap} thus implies that $\mathrm{R}\underline{\Gamma}_Z(\M^\bullet)$ (and thus $j_+j^!\M^\bullet$) is $0$-holonomic whenever $Z$ is an algebraic snc divisor in the sense of \cite[subsection 9.2]{DcapThree}.

In the general case, \cite{Temkin} provides a projective morphism $X'\to X$ of smooth rigid spaces which is an isomorphism over $U$ such that $U$ is the complement of an algebraic snc divisor in $X'$. We thus have a commutative diagram
\begin{equation*}
\begin{xy}
\xymatrix{
U\ar[r]^{\widetilde{j}} \ar[d]_{\mathrm{id}} & X'\ar[d]^f\\
U\ar[r]_j & X.
}
\end{xy}
\end{equation*} 
Now $j_+j^!\M^\bullet\cong f_+\widetilde{j}_+\widetilde{j}^!f^!\M^\bullet$ by Proposition \ref{compandadj}, and this is a $0$-holonomic $\C$-complex by Proposition \ref{zeroholproj}, Corollary \ref{zeroholinv}, and the above argument. Thus $j_+j^!\M^\bullet$ is $0$-holonomic, and by Proposition \ref{triangle}, $\mathrm{R}\underline{\Gamma}_Z(\M^\bullet)$ is also $0$-holonomic.
\end{proof}

\section{Holonomic $\C$-complexes}
\begin{defn}
\label{defnhol}
	Let $X$ be a smooth rigid analytic $K$-variety, and let $n>0$. 
	A $\C$-complex $\M^\bullet\in \mathrm{D}_\C(\w{\D}_X)$ is called \textbf{$n$-holonomic} if it is $(n-1)$-holonomic and for any smooth morphism $f: X'\to X$ and any divisor $Z\subseteq X'$, both $\mathbb{D}f^!\M^\bullet$ and $\mathbb{D}\mathrm{R}\underline{\Gamma}_Z(f^!\M^\bullet)$ are $(n-1)$-holonomic.
	
	A $\C$-complex $\M^\bullet$ is called \textbf{holonomic} if it is $n$-holonomic for all $n\geq 0$.
\end{defn}
By the localisation triangle from Proposition \ref{triangle}, we can also consider $\mathbb{D}j_+j^!f^!\M^\bullet$ instead of $\mathbb{D}\mathrm{R}\underline{\Gamma}_Z(f^!\M^\bullet)$, where $j: U\to X'$ is the complement of $Z$.

We write $\mathrm{D}_{n-\mathrm{hol}}(\w{\D}_X)$ for the full subcategory of $n$-holonomic complexes, and $\mathrm{D}_{\mathrm{hol}}(\w{\D}_X)$ for the full subcategory of holonomic complexes. We say that a coadmissible $\w{\D}_X$-module is holonomic if it is holonomic when viewed as a complex concentrated in degree zero. We warn again that $\mathrm{D}_{\mathrm{hol}}(\w{\D}_X)$ is not the same as the category of $\C$-complexes with holonomic cohomology, as we will discuss in more detail in Lemma \ref{unboundedconnection}.

\begin{lem}
	\label{nholtriang}
	For each $n\geq 0$, $\mathrm{D}_{n-\mathrm{hol}}(\w{\D}_X)$ forms a full triangulated subcategory of $\mathrm{D}(\w{\D}_X)$.
	
	The subcategory $\mathrm{D}_{\mathrm{hol}}(\w{\D}_X)$ is a full triangulated subcategory of $\mathrm{D}(\w{\D}_X)$. 
\end{lem}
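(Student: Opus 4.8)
The plan is to argue by induction on $n$, proving simultaneously for \emph{all} smooth rigid analytic $K$-varieties that $\mathrm{D}_{n-\mathrm{hol}}(\w{\D}_X)$ is a full triangulated subcategory; this uniformity is essential, since the defining conditions for $n$-holonomicity on $X$ reference $(n-1)$-holonomicity on the auxiliary spaces $X'$ appearing in the smooth pullbacks. The base case $n=0$ is exactly Lemma \ref{zeroholtriang}. The key formal inputs are that $f^!$ and $\mathrm{R}\underline{\Gamma}_Z$ are covariant triangulated functors (they send distinguished triangles to distinguished triangles and commute with the shift), that $\mathbb{D}$ is a contravariant triangulated functor (it sends a distinguished triangle $\M_1^\bullet\to\M_2^\bullet\to\M_3^\bullet\to$ to the distinguished triangle $\mathbb{D}\M_3^\bullet\to\mathbb{D}\M_2^\bullet\to\mathbb{D}\M_1^\bullet\to$ and satisfies $\mathbb{D}(\M^\bullet[1])\cong(\mathbb{D}\M^\bullet)[-1]$), and that $\mathbb{D}$, $f^!$ and $\mathrm{R}\underline{\Gamma}_Z$ all preserve $\C$-complexes in the relevant situations (Theorem \ref{Ccomplexprops} and the definition of $0$-holonomicity), so that every expression occurring in Definition \ref{defnhol} is well-defined once $\M^\bullet$ is $(n-1)$-holonomic.

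For the inductive step I would first check closure under shifts. Given an $n$-holonomic $\M^\bullet$, condition (a) for $\M^\bullet[1]$ holds by the inductive hypothesis (which gives closure of $(n-1)$-holonomicity under shifts), and for condition (b) one computes $\mathbb{D}f^!(\M^\bullet[1])\cong(\mathbb{D}f^!\M^\bullet)[-1]$ and $\mathbb{D}\mathrm{R}\underline{\Gamma}_Z(f^!(\M^\bullet[1]))\cong(\mathbb{D}\mathrm{R}\underline{\Gamma}_Z(f^!\M^\bullet))[-1]$, both $(n-1)$-holonomic again by the inductive closure under shifts. Closure under isomorphism is immediate, as all the defining conditions are invariant under isomorphism and the functors involved respect isomorphisms.

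The heart of the argument is the two-out-of-three property. Suppose $\M_1^\bullet\to\M_2^\bullet\to\M_3^\bullet\to$ is a distinguished triangle with two of the terms $n$-holonomic; I want the third to be $n$-holonomic. Condition (a) for the third term follows directly from the inductive hypothesis applied to the same triangle. For condition (b), fix a smooth $f\colon X'\to X$ and a divisor $Z\subseteq X'$. Applying the triangulated functor $f^!$ and then $\mathbb{D}$ produces a distinguished triangle among the three complexes $\mathbb{D}f^!\M_i^\bullet$ (with arrows reversed), two of which are $(n-1)$-holonomic by hypothesis; the inductive hypothesis then forces the third to be $(n-1)$-holonomic. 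The identical argument applied to $\mathrm{R}\underline{\Gamma}_Z\circ f^!$ followed by $\mathbb{D}$ handles the local-cohomology term. Hence the third term satisfies (b) and is $n$-holonomic. The remaining rotations of the triangle are covered symmetrically.

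Finally, since $\mathrm{D}_{\mathrm{hol}}(\w{\D}_X)=\bigcap_{n\geq 0}\mathrm{D}_{n-\mathrm{hol}}(\w{\D}_X)$ and an intersection of full triangulated subcategories is again a full triangulated subcategory (closure under shifts and the two-out-of-three property hold termwise in $n$), the statement for holonomic complexes follows immediately. The only point demanding care—which I would flag explicitly rather than treat as an obstacle—is tracking the variance of $\mathbb{D}$ so that ``two known terms'' of the original triangle indeed correspond to two known terms of the dualized triangle; because $\mathbb{D}$ is an anti-equivalence on $\C$-complexes (Theorem \ref{Ccomplexprops}.(iv)), this bookkeeping goes through cleanly and no genuine analytic input beyond the cited triangulated-functor properties is required.
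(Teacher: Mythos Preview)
Your proposal is correct and follows exactly the approach the paper takes: the paper's own proof consists of the single sentence ``This follows straightforwardly by induction, starting with Lemma \ref{zeroholtriang} for the base case $n=0$,'' and you have simply unpacked that induction in full detail, using precisely the triangulated-functor properties of $f^!$, $\mathrm{R}\underline{\Gamma}_Z$, and $\mathbb{D}$ that make the argument go through.
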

\begin{proof}
	This follows straightforwardly by induction, starting with Lemma \ref{zeroholtriang} for the base case $n=0$.
\end{proof}

\begin{lem}
	\label{nholanycl}
	Let $\M^\bullet$ be an $n$-holonomic $\C$-complex on $X$, $n\geq 1$. Then for any smooth morphism $f: X'\to X$ and any closed subvariety $Z\subseteq X'$, the complex $\mathbb{D}\mathrm{R}\underline{\Gamma}_Z(f^!\M^\bullet)$ is $(n-1)$-holonomic. 
	If $j: U\to X'$ is the complement of $Z$, then $\mathbb{D}j_+j^!f^!\M^\bullet$ is $(n-1)$-holonomic.
\end{lem}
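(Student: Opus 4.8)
The plan is to transplant the proof of Lemma \ref{anyclosedsubvar} to the present setting, the only new ingredient being that we carry the duality functor $\mathbb{D}$ through the Mayer--Vietoris triangle of Proposition \ref{MVforDcap}. Since holonomicity is a local condition (by the same inductive argument that establishes locality in the $0$-holonomic case, starting from the unnamed locality lemma), I would first pass to an admissible affinoid covering $\{V_i\}$ of $X'$ and thereby assume $Z=D_1\cap\dots\cap D_k$ is a finite intersection of divisors. This reduction is harmless: restriction along an open immersion $\iota:V_i\to X'$ commutes with $f^!$ (via $\iota^!f^!\cong(f\iota)^!$, Proposition \ref{compandadj}.(i), with $f\iota$ still smooth), with $\mathrm{R}\underline{\Gamma}_{(-)}$, and with $\mathbb{D}$, and the definition of $n$-holonomicity of $\M^\bullet$ quantifies over \emph{all} smooth morphisms to $X$, so the divisor cases on each $V_i$ are supplied directly by Definition \ref{defnhol}. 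Note also that $\mathrm{R}\underline{\Gamma}_Z(f^!\M^\bullet)$, $f^!\M^\bullet$ and $j_+j^!f^!\M^\bullet$ are all $\C$-complexes (by $0$-holonomicity of $\M^\bullet$, Lemma \ref{anyclosedsubvar} and Proposition \ref{triangle}), so $\mathbb{D}$ applies and sends the relevant distinguished triangles to distinguished triangles.

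For the first statement I would induct on $k$. For $k=1$ the subvariety $Z=D_1$ is a divisor, and $\mathbb{D}\mathrm{R}\underline{\Gamma}_{D_1}(f^!\M^\bullet)$ is $(n-1)$-holonomic directly from Definition \ref{defnhol}. For the inductive step set $Z'=D_1\cap\dots\cap D_{k-1}$, so $Z=Z'\cap D_k$, and apply Proposition \ref{MVforDcap} to the pair $(Z',D_k)$:
\begin{equation*}
\mathrm{R}\underline{\Gamma}_{Z}(f^!\M^\bullet)\to \mathrm{R}\underline{\Gamma}_{Z'}(f^!\M^\bullet)\oplus \mathrm{R}\underline{\Gamma}_{D_k}(f^!\M^\bullet)\to \mathrm{R}\underline{\Gamma}_{Z'\cup D_k}(f^!\M^\bullet)\to.
\end{equation*}
Applying the contravariant triangulated functor $\mathbb{D}$ reverses this into a distinguished triangle in which $\mathbb{D}\mathrm{R}\underline{\Gamma}_{Z'\cup D_k}(f^!\M^\bullet)$ and $\mathbb{D}\mathrm{R}\underline{\Gamma}_{Z'}(f^!\M^\bullet)\oplus \mathbb{D}\mathrm{R}\underline{\Gamma}_{D_k}(f^!\M^\bullet)$ are two of the terms. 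The key point is the distributive identity $Z'\cup D_k=\bigcap_{i=1}^{k-1}(D_i\cup D_k)$, which exhibits $Z'\cup D_k$ as an intersection of $k-1$ divisors (each $D_i\cup D_k$ being a divisor). Thus both $\mathbb{D}\mathrm{R}\underline{\Gamma}_{Z'}(f^!\M^\bullet)$ and $\mathbb{D}\mathrm{R}\underline{\Gamma}_{Z'\cup D_k}(f^!\M^\bullet)$ are $(n-1)$-holonomic by the inductive hypothesis, while $\mathbb{D}\mathrm{R}\underline{\Gamma}_{D_k}(f^!\M^\bullet)$ is $(n-1)$-holonomic by Definition \ref{defnhol}; since $\mathrm{D}_{(n-1)-\mathrm{hol}}(\w{\D}_{X'})$ is triangulated (Lemma \ref{nholtriang}) and the direct sum term lies in it, the remaining term $\mathbb{D}\mathrm{R}\underline{\Gamma}_{Z}(f^!\M^\bullet)$ is $(n-1)$-holonomic.

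For the second statement I would apply $\mathbb{D}$ to the localisation triangle of Proposition \ref{triangle}.(i),
\begin{equation*}
\mathrm{R}\underline{\Gamma}_Z(f^!\M^\bullet)\to f^!\M^\bullet\to j_+j^!f^!\M^\bullet\to,
\end{equation*}
obtaining a distinguished triangle with $\mathbb{D}f^!\M^\bullet$ and $\mathbb{D}\mathrm{R}\underline{\Gamma}_Z(f^!\M^\bullet)$ among its terms. The former is $(n-1)$-holonomic by Definition \ref{defnhol} (as $f$ is smooth), and the latter by the first part of the lemma, so $\mathbb{D}j_+j^!f^!\M^\bullet$ is $(n-1)$-holonomic by Lemma \ref{nholtriang}.

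The genuinely delicate point is not the formal triangle-chase but the initial reduction to an intersection of divisors: it rests on holonomicity being a local property and on the compatibility of $\mathbb{D}$, $\mathrm{R}\underline{\Gamma}_{(-)}$ and $f^!$ with restriction to admissible opens. Once that reduction is in place, the argument is purely formal, driven by the distributive identity for $Z'\cup D_k$ and the triangulated-subcategory property of $\mathrm{D}_{(n-1)-\mathrm{hol}}(\w{\D}_{X'})$, exactly paralleling the $0$-holonomic case.
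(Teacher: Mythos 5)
Your proof is correct and follows exactly the route of the paper's (much terser) argument: write $Z$ locally as an intersection of divisors, run a Mayer--Vietoris induction via Proposition \ref{MVforDcap} using that $\mathrm{D}_{(n-1)-\mathrm{hol}}$ is triangulated, and deduce the second statement by dualizing the localisation triangle of Proposition \ref{triangle}. The extra care you take with the reduction to the divisor-intersection case (locality and compatibility of the functors with restriction) is a faithful expansion of what the paper leaves implicit.
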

\begin{proof}
	Writing $Z$ as an intersection of divisors, the first statement follows from Proposition \ref{MVforDcap} and Lemma \ref{nholtriang}, while the second follows from Proposition \ref{triangle} and Lemma \ref{nholtriang}.
\end{proof}

We now study the stability properties of holonomic complexes. As the definition already is formulated in terms of behaviour under our six functors, it is not surprising that this is mainly formal. For instance, it is immediate from the definition that $\mathbb{D}$ sends $n$-holonomic complexes to $(n-1)$-holonomic complexes, so that the subcategory $\mathrm{D}_{\mathrm{hol}}(\w{\D}_X)$ is stable under the duality operation.

\begin{lem}
	Let $X$ be a smooth rigid analytic $K$-variety and let $\M^\bullet$ be an $n$-holonomic $\C$-complex, $n\geq 0$. If $f: X'\to X$ is a smooth morphism, then $f^!\M^\bullet$ is also $n$-holonomic.
\end{lem}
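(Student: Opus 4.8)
The plan is to argue by induction on $n$, and the entire argument is formal, as the structure of Definition \ref{defnhol} and the composition law for $f^!$ do all the work. The base case $n=0$ is exactly the remark following Lemma \ref{anyclosedsubvar}: since the composition of two smooth morphisms is smooth and $f^!$ preserves $\C$-complexes for $f$ smooth, $0$-holonomicity is stable under smooth pullback, using \cite[Proposition 7.5]{Bode6Op}.

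For the inductive step, I would assume the statement for $(n-1)$-holonomic complexes and take $\M^\bullet$ to be $n$-holonomic on $X$ with $f: X'\to X$ smooth. To check that $f^!\M^\bullet$ is $n$-holonomic, I verify the two clauses of Definition \ref{defnhol}. The first clause, that $f^!\M^\bullet$ be $(n-1)$-holonomic, is immediate: an $n$-holonomic complex is in particular $(n-1)$-holonomic, so the inductive hypothesis applies directly. (This also subsumes the requirement that $f^!\M^\bullet$ be a $\C$-complex.)

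The second clause is where the composition isomorphism is the key point. Given any smooth morphism $g: X''\to X'$ and any divisor $Z\subseteq X''$, the composite $fg: X''\to X$ is again smooth, and Proposition \ref{compandadj}.(i) supplies a natural isomorphism $g^!f^!\M^\bullet\cong (fg)^!\M^\bullet$. Hence
\begin{equation*}
\mathbb{D}g^!f^!\M^\bullet\cong \mathbb{D}(fg)^!\M^\bullet, \qquad \mathbb{D}\mathrm{R}\underline{\Gamma}_Z(g^!f^!\M^\bullet)\cong \mathbb{D}\mathrm{R}\underline{\Gamma}_Z((fg)^!\M^\bullet),
\end{equation*}
and both complexes on the right are $(n-1)$-holonomic by the very definition of $n$-holonomicity of $\M^\bullet$, applied to the smooth morphism $fg$ and the divisor $Z\subseteq X''$. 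This establishes the second clause and closes the induction.

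I do not expect any genuine obstacle here. The only thing to watch is the bookkeeping: the test data (a smooth morphism $g$ downstairs together with a divisor upstairs) used to probe $n$-holonomicity of $f^!\M^\bullet$ pulls back along $f$, via $g^!f^!\cong (fg)^!$, to admissible test data for $\M^\bullet$ itself, so every condition to be verified for $f^!\M^\bullet$ is a special case of a condition already guaranteed for $\M^\bullet$. In particular, no non-archimedean analysis is needed, and the result is a straightforward consequence of the composition law together with the inductive form of Definition \ref{defnhol}.
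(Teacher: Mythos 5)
Your proof is correct and follows essentially the same route as the paper: the key point in both is the composition isomorphism $g^!f^!\M^\bullet\cong (fg)^!\M^\bullet$ from Proposition \ref{compandadj}, which turns every test datum for $f^!\M^\bullet$ into a test datum for $\M^\bullet$. If anything, your version is slightly more careful, since you make explicit the induction needed to verify the first clause of Definition \ref{defnhol} (that $f^!\M^\bullet$ is $(n-1)$-holonomic), which the paper leaves implicit, with the base case $n=0$ handled by the remark on smooth pullback of $0$-holonomic complexes exactly as you cite it.
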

\begin{proof}
	Let $g: X''\to X'$ be a smooth morphism. Since the composition of smooth morphisms is smooth, it follows from the composition result Proposition \ref{compandadj} that 
	\begin{equation*}
		\mathbb{D}g^!f^!\M^\bullet\cong \mathbb{D}(fg)^!\M^\bullet
	\end{equation*}
	is $(n-1)$-holonomic. Likewise, if $Z\subseteq X''$ is a divisor, then 
	\begin{equation*}
		\mathbb{D}\mathrm{R}\underline{\Gamma}_Z(g^!f^!\M^\bullet)\cong \mathbb{D}\mathrm{R}\underline{\Gamma}_Z((fg)^!\M^\bullet)
	\end{equation*}
	is $(n-1)$-holonomic. Thus, $f^!\M^\bullet$ is $n$-holonomic.
\end{proof}

\begin{prop}
Let $i: Z\to X$ be a Zariski closed smooth subvariety and let $n\geq 0$. If $\M^\bullet$ is $n$-holonomic on $X$, then $i^!\M^\bullet$ is $n$-holonomic on $Z$.
\end{prop}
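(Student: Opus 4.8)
The plan is to prove the statement by induction on $n$, carrying along the companion assertion for the ordinary inverse image $i^+$. Explicitly, I would prove simultaneously by induction on $n$ that, for every Zariski closed smooth embedding $i\colon Z\to X$, both $i^!$ and $i^+$ send $n$-holonomic complexes to $n$-holonomic complexes. The base case $n=0$ for $i^!$ is Proposition \ref{pullbackalongclosed}; the case of $i^+$ follows from the local cohomology base change discussed below together with the fact that $i^+$ preserves $\C$-complexes (being $\mathbb{D}_Zi^!\mathbb{D}_X$, by Theorem \ref{Ccomplexprops}). The reason $i^+$ cannot be avoided is that duality converts one functor into the other: from $i^+=\mathbb{D}_Zi^!\mathbb{D}_X$ and $\mathbb{D}^2\cong\mathrm{id}$ (Theorem \ref{Ccomplexprops}.(iv)) one obtains $\mathbb{D}_Zi^!\cong i^+\mathbb{D}_X$ and $\mathbb{D}_Zi^+\cong i^!\mathbb{D}_X$, and $i^!$, $i^+$ genuinely differ (not merely by a shift), so the two induction hypotheses feed into one another.

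For the inductive step I would argue as follows. Let $\M^\bullet$ be $n$-holonomic on $X$; then $i^!\M^\bullet$ is $(n-1)$-holonomic by the inductive hypothesis, and it remains to verify, for every smooth $g\colon Z'\to Z$ and divisor $W\subseteq Z'$, that $\mathbb{D}g^!i^!\M^\bullet$ and $\mathbb{D}\mathrm{R}\underline{\Gamma}_W(g^!i^!\M^\bullet)$ are $(n-1)$-holonomic. Arguing locally on $Z$ and taking a tubular neighbourhood $X\cong Z\times\mathbf{B}^d$ with $i$ the zero section (as in Proposition \ref{pullbackalongclosed}), I set $g'=g\times\mathrm{id}\colon X'=Z'\times\mathbf{B}^d\to X$ and let $i'\colon Z'\to X'$ be the zero section, so that $g^!i^!\cong i'^!g'^!$ by the composition law (Proposition \ref{compandadj}.(i)). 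Putting $\N^\bullet=g'^!\M^\bullet$, which is $n$-holonomic since smooth pullback preserves $n$-holonomicity (previous lemma), and $\widetilde{W}=W\times\mathbf{B}^d\subseteq X'$, which is a divisor with $i'^{-1}\widetilde{W}=W$, the duality relations together with the local cohomology base change discussed below give
$$\mathbb{D}g^!i^!\M^\bullet\cong i'^+\mathbb{D}\N^\bullet,\qquad \mathbb{D}\mathrm{R}\underline{\Gamma}_W(g^!i^!\M^\bullet)\cong i'^+\mathbb{D}\mathrm{R}\underline{\Gamma}_{\widetilde{W}}(\N^\bullet).$$
Since $\mathbb{D}\N^\bullet$ and $\mathbb{D}\mathrm{R}\underline{\Gamma}_{\widetilde{W}}(\N^\bullet)$ are $(n-1)$-holonomic straight from the definition of $n$-holonomicity of $\N^\bullet$ (with the identity as the smooth map), the inductive hypothesis for $i'^+$ completes the step. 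The step for $i^+$ is symmetric, using $g^!i^+\cong i'^+g'^!$ and the inductive hypothesis for $i'^!$.

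The key technical input — and the step I expect to be the main obstacle — is the local cohomology base change
$$\mathrm{R}\underline{\Gamma}_W(i'^!\N^\bullet)\cong i'^!\mathrm{R}\underline{\Gamma}_{\widetilde{W}}(\N^\bullet)$$
(and its analogue with $i'^+$) used in the second isomorphism above. By the localisation triangle of Proposition \ref{triangle} this reduces to a base change of the form $i'^!(j_{\widetilde{W}})_+\cong (j_W)_+\iota^!$ over the Cartesian square formed by $i'$ and the open complements $j_W\colon Z'\setminus W\to Z'$ and $j_{\widetilde{W}}\colon X'\setminus\widetilde{W}\to X'$, where $\iota$ is the induced closed embedding; this rests on proper base change for the closed immersion $i'$ (commuting $i'^{-1}$ past $\mathrm{R}(j_{\widetilde{W}})_*$), promoted to $\w{\D}$-modules through flatness of the transfer bimodule. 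Note also a boundedness caveat: the relation $g^!i^+\cong i'^+g'^!$ needs $\mathbb{D}g^!\cong g^!\mathbb{D}$, i.e. Theorem \ref{pullbackdual}, which is stated for bounded complexes; in the unbounded case one reduces by truncation (compare Corollary \ref{truncatezerohol}) or uses the expected extension of that theorem to all $\C$-complexes. The remaining ingredients — triangulatedness of $\mathrm{D}_{n-\mathrm{hol}}(\w{\D}_X)$ (Lemma \ref{nholtriang}) and that $\mathbb{D}$ lowers the holonomicity index by one — are formal.
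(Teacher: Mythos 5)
There is a genuine gap, and it sits at the base case of your companion assertion. You need ``$i^+$ preserves $0$-holonomicity'' in order to run the inductive step at $n=1$ (where you apply $i'^+$ to the $0$-holonomic complex $\mathbb{D}\N^\bullet$), but your justification -- that $i^+=\mathbb{D}_Zi^!\mathbb{D}_X$ preserves $\C$-complexes by Theorem \ref{Ccomplexprops} -- is false for closed immersions. Theorem \ref{Ccomplexprops}.(i) only says that $i^!$ preserves $\C$-complexes \emph{supported on $Z$}; for a general $\C$-complex, $i^!$ can destroy coadmissibility (the paper's own example $\w{\D}_X/\w{\D}_X\cdot\prod(1-\pi^i\partial)$ in Section 6 is a coadmissible module whose fibre at a point is infinite-dimensional). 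The deeper problem is that $0$-holonomicity of $\M^\bullet$ gives no control whatsoever over $\mathbb{D}\M^\bullet$ beyond its being a $\C$-complex -- that is exactly why the definition of $n$-holonomicity is inductive in the first place -- so there is no reason for $i^!\mathbb{D}\M^\bullet$, hence for $i^+\M^\bullet$, to be a $\C$-complex, let alone $0$-holonomic. Your induction therefore cannot get off the ground.

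The paper's proof avoids $i^+$ entirely by exploiting the other commutation: instead of rewriting $\mathbb{D}g^!i^!\M^\bullet$ as $i'^+\mathbb{D}\N^\bullet$, one pushes forward along the closed immersion $i'$ and uses $i'_+\mathbb{D}\cong\mathbb{D}i'_+$ (Proposition \ref{Poincare}) to get $i'_+\mathbb{D}g^!i^!\M^\bullet\cong\mathbb{D}i'_+i'^!g'^!\M^\bullet\cong\mathbb{D}\mathrm{R}\underline{\Gamma}_{Z'}(g'^!\M^\bullet)$, which is $(n-1)$-holonomic directly from the definition of $n$-holonomicity (via Lemma \ref{nholanycl}); Kashiwara's equivalence then recovers $\mathbb{D}g^!i^!\M^\bullet\cong i'^!(i'_+\mathbb{D}g^!i^!\M^\bullet)$, and the induction hypothesis for $i^!$ \emph{alone} finishes the step. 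This also sidesteps your second weak point: the local cohomology base change $\mathrm{R}\underline{\Gamma}_W(i'^!\N^\bullet)\cong i'^!\mathrm{R}\underline{\Gamma}_{\widetilde W}(\N^\bullet)$ is nowhere established in the paper, and your justification via ``flatness of the transfer bimodule'' is incorrect -- for a closed immersion $\w{\D}_{Z\to X}$ is a quotient of $i^{-1}\w{\D}_X$, not flat over it, and commuting the Koszul computation of $i'^!$ past $\mathrm{R}j_*$ requires a projection-formula argument with $\C$-complex hypotheses as in Proposition \ref{triangle}.(ii). (The paper replaces this by the composition identities $i'_+j_+\cong j'_+s_+$ from Proposition \ref{compandadj} applied to the tubular-neighbourhood diagram, together with the localisation triangle.) If you want to salvage your strategy, you would have to restrict the $i^+$-statement to indices where duality is already under inductive control, but as written the $n=0$ case for $i^+$ is both unproved and doubtful.
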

\begin{proof}
We argue inductively, with the case $n=0$ being done in Proposition \ref{pullbackalongclosed}. 

If $n\geq 1$, assume the statement is true for $n-1$. Let $\M^\bullet\in \mathrm{D}_\C(\w{\D}_X)$ be $n$-holonomic. Let $f: Z'\to Z$ be a smooth morphism and let $j: U\to Z'$ be the complement of some divisor. Arguing locally, we can again assume that $X\cong Z\times \mathbf{B}^d$, so that the morphisms fit into the following commutative diagram
\begin{equation*}
\begin{xy}
\xymatrix{
U\ar[r]^s \ar[d]_j& U\times \mathbf{B}^d\ar[d]^{j'=j\times \mathrm{id}}& Y\ar[l]_{\phi}\\
Z'\ar[d]_f\ar[r]^{i'} & Z'\times \mathbf{B}^d\ar[d]^{f'=f\times \mathrm{id}}\\
Z\ar[r]^i & Z\times \mathbf{B}^d
}
\end{xy}
\end{equation*} 
as in Proposition \ref{pullbackalongclosed}.

Note that $\mathbb{D}f^!i^!\M^\bullet$ is $\C$-complex, since $\M^\bullet$ is $0$-holonomic, and Proposition \ref{Poincare} yields
\begin{equation*}
i'_+\mathbb{D}f^!i^!\M^\bullet\cong \mathbb{D}i'_+i'^!f'^!\M^\bullet,
\end{equation*}
which is $(n-1)$-holonomic by definition. But now
\begin{equation*}
\mathbb{D}f^!i^!\M^\bullet\cong i'^!i'_+\mathbb{D}f^!i^!\M^\bullet
\end{equation*}
by Kashiwara's equivalence, and the latter is $(n-1)$-holonomic by induction hypothesis.

Similarly, it now suffices to show that $i'_+\mathbb{D}j_+j^!f^!i^!\M^\bullet$ is $(n-1)$-holonomic and apply again Theorem \ref{Ccomplexprops}.(i) and the induction hypothesis. Note that
\begin{equation*}
i'_+\mathbb{D}j_+j^!f^!i^!\M^\bullet\cong \mathbb{D}i'_+ j_+j^!f^!i^!\M^\bullet\cong \mathbb{D}j'_+s_+s^!j'^!f'^!\M^\bullet,
\end{equation*}
by Proposition \ref{Poincare} and Proposition \ref{compandadj}. The latter object fits into a distinguished triangle with
\begin{equation*}
\mathbb{D}j'_+j'^!f'^!\M^\bullet
\end{equation*}
and 
\begin{equation*}
\mathbb{D}j'_+\phi_+\phi^!j'^!f'^!\M^\bullet,
\end{equation*}
which are both $(n-1)$-holonomic by $n$-holonomicity of $\M^\bullet$ and Lemma \ref{nholanycl}. This concludes the proof.
\end{proof}

\begin{prop}
\label{nholproj}
	Let $g: X\to Y$ be a projective morphism between smooth rigid analytic $K$-spaces. Let $n\geq 0$. If $\M^\bullet\in \mathrm{D}_\C(\w{\D}_X)$ is $n$-holonomic, then $g_+\M^\bullet$ is $n$-holonomic.
\end{prop}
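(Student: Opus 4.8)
The plan is to argue by induction on $n$, taking the base case $n=0$ from Proposition \ref{zeroholproj} and closely mirroring the structure of that proof. So fix $n\geq 1$, assume the proposition for $n-1$, and let $\M^\bullet$ be $n$-holonomic on $X$. Since $\M^\bullet$ is in particular $(n-1)$-holonomic, the induction hypothesis already gives that $g_+\M^\bullet$ is $(n-1)$-holonomic (and it is a $\C$-complex by Theorem \ref{Ccomplexprops}.(iii)); what remains is to control the duals appearing in the definition of $n$-holonomicity after an arbitrary smooth base change and passage to a divisor complement.

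Accordingly, I would let $f: Y'\to Y$ be a smooth morphism and $Z\subseteq Y'$ a divisor with complement $j: U\to Y'$, and form the two Cartesian squares from the proof of Proposition \ref{zeroholproj}, with $X'=Y'\times_Y X$ and $U'=U\times_Y X$, projections $\widetilde{f}: X'\to X$ (smooth) and $g': X'\to Y'$ (projective as a base change of $g$), and embeddings $\widetilde{j}: U'\to X'$, $\widetilde{g}: U'\to U$. By the remark following Definition \ref{defnhol}, it suffices to show that $\mathbb{D}f^!g_+\M^\bullet$ and $\mathbb{D}j_+j^!f^!g_+\M^\bullet$ are both $(n-1)$-holonomic.

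For the first, I would invoke base change: since $g_+\M^\bullet$ is a $\C$-complex and hence so is $f^!g_+\M^\bullet$ by Theorem \ref{Ccomplexprops}.(ii), and $\M^\bullet$ is $0$-holonomic, Theorem \ref{basechange} yields $f^!g_+\M^\bullet\cong g'_+\widetilde{f}^!\M^\bullet$. Applying $\mathbb{D}$ and the commutation $\mathbb{D}g'_+\cong g'_+\mathbb{D}$ from Proposition \ref{Poincare} (valid as $g'$ is projective) gives $\mathbb{D}f^!g_+\M^\bullet\cong g'_+\mathbb{D}\widetilde{f}^!\M^\bullet$. Now $\mathbb{D}\widetilde{f}^!\M^\bullet$ is $(n-1)$-holonomic straight from the $n$-holonomicity of $\M^\bullet$ and smoothness of $\widetilde{f}$, so the induction hypothesis applied to the projective $g'$ makes $g'_+\mathbb{D}\widetilde{f}^!\M^\bullet$ $(n-1)$-holonomic. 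For the second, I would reuse the identity $j_+j^!f^!g_+\M^\bullet\cong g'_+\widetilde{j}_+\widetilde{j}^!\widetilde{f}^!\M^\bullet$ already established inside the proof of Proposition \ref{zeroholproj} (base change on the lower square, then commutativity of the upper square together with Proposition \ref{compandadj}). Applying $\mathbb{D}$ and Proposition \ref{Poincare} once more reduces us to checking that $\mathbb{D}\widetilde{j}_+\widetilde{j}^!\widetilde{f}^!\M^\bullet$ is $(n-1)$-holonomic and then pushing forward along the projective $g'$ via the induction hypothesis.

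The one step that is not purely formal is this last reduction: $\widetilde{j}$ is the complement in $X'$ of the closed subvariety $(g')^{-1}(Z)$, which is in general not a divisor, so the definition of $n$-holonomicity does not apply to it verbatim. This is precisely the gap bridged by Lemma \ref{nholanycl}, which upgrades the divisor case to arbitrary closed subvarieties via Mayer--Vietoris; since $\widetilde{f}$ is smooth and $\M^\bullet$ is $n$-holonomic, that lemma gives $(n-1)$-holonomicity of $\mathbb{D}\widetilde{j}_+\widetilde{j}^!\widetilde{f}^!\M^\bullet$. Combining the two verifications with Lemma \ref{nholtriang} then shows that $g_+\M^\bullet$ is $n$-holonomic, completing the induction. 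I expect the only genuine subtlety to be exactly this non-divisor preimage issue; everything else is the formal interplay of Theorem \ref{basechange} and Proposition \ref{Poincare}, both already available.
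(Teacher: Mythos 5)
Your proposal is correct and follows essentially the same route as the paper: induction on $n$ with base case Proposition \ref{zeroholproj}, the same Cartesian diagram, Theorem \ref{basechange} plus Proposition \ref{Poincare} to commute $\mathbb{D}$ past $g'_+$, and the induction hypothesis applied to the projective base change $g'$. If anything you are slightly more careful than the paper, which silently uses that $\mathbb{D}\widetilde{j}_+\widetilde{j}^!\widetilde{f}^!\M^\bullet$ is $(n-1)$-holonomic even though the preimage of $Z$ in $X'$ need not be a divisor; your explicit appeal to Lemma \ref{nholanycl} is exactly the right fix.
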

\begin{proof}
The case $n=0$ is precisely Proposition \ref{zeroholproj}.

Now let $n>0$ and assume that the statement is true for $(n-1)$-holonomic $\C$-complexes. Consider the same diagram
\begin{equation*}
\begin{xy}
\xymatrix{
V'\ar[r]^{\widetilde{g}} \ar[d]_{\widetilde{j}} & V\ar[d]^j\\
X'\ar[r]^{g'} \ar[d]_{\widetilde{f}} & Y'\ar[d]^f\\
X\ar[r]_g & Y
}
\end{xy}
\end{equation*}
as in Proposition \ref{zeroholproj}. Then it follows from Theorem \ref{basechange} and Proposition \ref{Poincare} that 
\begin{equation*}
\mathbb{D}f^!g_+\M^\bullet\cong \mathbb{D}g'_+\widetilde{f}^!\M^\bullet\cong g'_+\mathbb{D}\widetilde{f}^!\M^\bullet
\end{equation*}
and
\begin{align*}
\mathbb{D}j_+j^!f^!g_+\M^\bullet&\cong \mathbb{D}j_+\widetilde{g}_+\widetilde {j}^!\widetilde{f}^!\M^\bullet\\
&\cong \mathbb{D}g'_+\widetilde{j}_+\widetilde{j}^!\widetilde{f}^!\M^\bullet\\
& \cong g'_+\mathbb{D}\widetilde{j}_+\widetilde{j}^!\widetilde{f}^!\M^\bullet.
\end{align*}
As these complexes are $(n-1)$-holonomic by induction hypothesis, this proves that $g_+\M^\bullet$ is $n$-holonomic.
\end{proof}

\begin{cor}
	\label{holKashiwara}
	Let $i: Z\to X$ be a Zariski closed smooth subvariety and let $\M^\bullet\in \mathrm{D}(\w{\D}_Z)$. Let $n\geq 0$. Then $\M^\bullet$ is $n$-holonomic if and only if $i_+\M^\bullet$ is $n$-holonomic.
\end{cor}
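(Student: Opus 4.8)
The plan is to deduce this directly from the two stability results established immediately above, mirroring the proof of the corresponding $0$-holonomic statement (the Corollary following Proposition \ref{zeroholproj}). No fresh induction on $n$ is required, since the propositions we invoke are already proved for all $n$.

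For the forward implication I would observe that a Zariski closed embedding is in particular projective, so if $\M^\bullet$ is $n$-holonomic then Proposition \ref{nholproj} immediately gives that $i_+\M^\bullet$ is $n$-holonomic.

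For the converse, suppose $i_+\M^\bullet$ is $n$-holonomic. Then it is in particular $0$-holonomic, hence a $\C$-complex, so Proposition \ref{Kashiwaratest} ensures that $\M^\bullet$ is itself a $\C$-complex. This is the one point that genuinely needs care: the Kashiwara equivalence of Theorem \ref{Ccomplexprops}.(i) is only available for $\C$-complexes, so one must first verify $\M^\bullet\in \mathrm{D}_\C(\w{\D}_Z)$ before invoking it. Once this is known, Theorem \ref{Ccomplexprops}.(i) yields the natural isomorphism $\M^\bullet\cong i^!i_+\M^\bullet$. Since $i$ is a closed smooth embedding and $i^!$ preserves $n$-holonomicity (the Proposition proved just above), and since $i_+\M^\bullet$ is $n$-holonomic by hypothesis, the right-hand side is $n$-holonomic; hence so is $\M^\bullet$.

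I do not expect any substantive obstacle: the real content has already been absorbed into Proposition \ref{nholproj} and into the preceding Proposition on stability of $i^!$, so the argument is purely formal. The only subtlety is the order of steps in the converse direction — one must establish that $\M^\bullet$ is a $\C$-complex via Proposition \ref{Kashiwaratest} before appealing to the Kashiwara equivalence and to the $i^!$-stability result, rather than the other way around.
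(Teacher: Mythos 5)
Your proof is correct and is exactly the argument the paper intends: the paper states this corollary without proof because it follows from Proposition \ref{nholproj} and the preceding $i^!$-stability proposition in precisely the same way as the $0$-holonomic analogue (Proposition \ref{Kashiwaratest} to get that $\M^\bullet$ is a $\C$-complex, then Kashiwara's equivalence $\M^\bullet\cong i^!i_+\M^\bullet$). Your remark on the order of steps in the converse direction is the right point of care, and matches the paper's own handling of the $0$-holonomic case.
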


\begin{cor}
	\label{holinv}
	Let $f: X\to Y$ be a morphism between smooth rigid analytic $K$-varieties. Then $f^!$ preserves $n$-holonomicity for any $n\geq 0$. In particular, $f^!$ preserves holonomicity.
\end{cor}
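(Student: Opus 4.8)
The plan is to reduce to the two cases already settled, namely $f^!$ for $f$ smooth (the preceding Lemma, stating that $f^!$ preserves $n$-holonomicity for smooth $f$) and $i^!$ for $i$ a Zariski closed smooth embedding (the preceding Proposition), by means of the standard graph factorization together with the composition law for extraordinary inverse images. This is precisely the strategy used in Corollary \ref{zeroholinv} for the base case, now run with the $n$-holonomic inputs.

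First I would factor $f$ as $X\xrightarrow{\Gamma_f}X\times Y\xrightarrow{p_2}Y$, where $\Gamma_f$ is the graph embedding and $p_2$ the projection onto the second factor. Since $X$ and $Y$ are smooth, so is $X\times Y$; the projection $p_2$ is smooth, and $\Gamma_f$ realises $X$ as a Zariski closed smooth subvariety of $X\times Y$ (this is exactly the factorisation into a closed embedding followed by a smooth morphism already invoked in Corollary \ref{zeroholinv}). By the composition law for inverse images, Proposition \ref{compandadj}.(i), there is then a natural isomorphism $f^!\M^\bullet\cong \Gamma_f^!\,p_2^!\M^\bullet$ for any $\M^\bullet\in \mathrm{D}(\w{\D}_Y)$.

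Now suppose $\M^\bullet$ is $n$-holonomic on $Y$. Applying the smooth case to $p_2$ shows that $p_2^!\M^\bullet$ is $n$-holonomic on $X\times Y$; applying the closed-embedding case (the preceding Proposition) to $\Gamma_f$ then shows that $\Gamma_f^!\,p_2^!\M^\bullet\cong f^!\M^\bullet$ is $n$-holonomic on $X$. Since this holds for every $n\geq 0$ and holonomicity means $n$-holonomicity for all $n$ (Definition \ref{defnhol}), the functor $f^!$ also preserves holonomicity, giving the ``in particular'' assertion.

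I expect no genuine obstacle here: the argument is purely formal, mirroring the proof of Corollary \ref{zeroholinv} almost verbatim, the only change being that the cited inputs are the $n$-holonomic versions of the smooth-pullback Lemma and the closed-embedding Proposition. The induction on $n$ is entirely absorbed into those two results, so nothing additional needs to be set up at this stage; the sole point to keep in mind is the availability of the graph factorisation with $X\times Y$ smooth and $\Gamma_f$ a closed embedding, which holds exactly as in the $0$-holonomic case.
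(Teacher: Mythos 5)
Your proposal is correct and follows exactly the argument the paper intends: the corollary is stated without proof precisely because it is the verbatim analogue of Corollary \ref{zeroholinv}, obtained by factoring $f$ through its graph into a closed embedding followed by a smooth projection, invoking the composition law of Proposition \ref{compandadj}.(i), and citing the $n$-holonomic versions of the smooth-pullback lemma and the closed-embedding proposition. Nothing is missing.
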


\begin{cor}
	Let $Z\subseteq X$ be a Zariski closed subvariety and let $j:U \to X$ denote the complement. If $\M^\bullet\in \mathrm{D}_{\C}(\w{\D}_X)$ is $n$-holonomic, then so are $j_+j^!\M^\bullet$ and $\mathrm{R}\underline{\Gamma}_Z(\M^\bullet)$.
\end{cor}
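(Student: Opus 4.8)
The plan is to run the argument of Corollary~\ref{zerohollocal} again, but now invoking the $n$-holonomic versions of the stability results from this section in place of their $0$-holonomic analogues; no separate induction on $n$ is required, since the relevant preservation properties at level $n$ are already available. Because $n$-holonomicity implies $0$-holonomicity, Corollary~\ref{zerohollocal} already tells us that $\mathrm{R}\underline{\Gamma}_Z(\M^\bullet)$ and $j_+j^!\M^\bullet$ are $\C$-complexes, so only the upgrade from `$\C$-complex' to `$n$-holonomic' remains. Moreover, in view of the localisation triangle
\begin{equation*}
\mathrm{R}\underline{\Gamma}_Z(\M^\bullet)\to \M^\bullet\to j_+j^!\M^\bullet\to
\end{equation*}
of Proposition~\ref{triangle} and the fact that $\mathrm{D}_{n-\mathrm{hol}}(\w{\D}_X)$ is triangulated (Lemma~\ref{nholtriang}), it suffices to show that one of the two complexes is $n$-holonomic; the other then follows automatically.

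First I would dispose of the case where $Z$ is a smooth closed subvariety. Here Proposition~\ref{triangle}.(iii) identifies $\mathrm{R}\underline{\Gamma}_Z(\M^\bullet)\cong i_+i^!\M^\bullet$, and this is $n$-holonomic: $i^!\M^\bullet$ is $n$-holonomic by Corollary~\ref{holinv}, and the closed embedding $i$ is projective, so $i_+$ preserves $n$-holonomicity by Proposition~\ref{nholproj}. Passing to an algebraic snc divisor $Z=\bigcup_i Z_i$, every iterated intersection of the components is again a smooth closed subvariety, so the previous case applies to each; an induction on the number of components via the Mayer--Vietoris triangle of Proposition~\ref{MVforDcap}, combined with Lemma~\ref{nholtriang}, then shows that $\mathrm{R}\underline{\Gamma}_Z(\M^\bullet)$, and hence also $j_+j^!\M^\bullet$, is $n$-holonomic.

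For an arbitrary Zariski closed $Z$, I would appeal to Temkin's resolution \cite{Temkin} to produce a projective morphism $f: X'\to X$ of smooth rigid spaces which is an isomorphism over $U$ and realises $U$ as the complement of an algebraic snc divisor in $X'$; writing $\widetilde{j}: U\to X'$ for the resulting open embedding, Proposition~\ref{compandadj} gives
\begin{equation*}
j_+j^!\M^\bullet\cong f_+\widetilde{j}_+\widetilde{j}^!f^!\M^\bullet.
\end{equation*}
Now $f^!\M^\bullet$ is $n$-holonomic by Corollary~\ref{holinv}, $\widetilde{j}_+\widetilde{j}^!f^!\M^\bullet$ is $n$-holonomic by the snc case just treated, and $f_+$ preserves $n$-holonomicity by Proposition~\ref{nholproj}; hence $j_+j^!\M^\bullet$ is $n$-holonomic, and the localisation triangle yields the same for $\mathrm{R}\underline{\Gamma}_Z(\M^\bullet)$. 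The holonomic case is immediate by letting $n$ range over all $n\geq 0$. Since every analytic input has been packaged into the cited preservation statements, the argument is entirely formal; the sole non-formal ingredient, and the step I would double-check most carefully, is the application of \cite{Temkin} to obtain $X'$ in the precise form required.
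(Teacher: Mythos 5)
Your argument is correct and follows essentially the same route as the paper: smooth $Z$ via $\mathrm{R}\underline{\Gamma}_Z(\M^\bullet)\cong i_+i^!\M^\bullet$ together with the $n$-holonomic preservation under $i^!$ and $i_+$, then snc divisors via Mayer--Vietoris, then the general case via Temkin's resolution and the localisation triangle. The only cosmetic difference is that you cite Proposition \ref{nholproj} directly where the paper cites Corollary \ref{holKashiwara}, which amounts to the same thing.
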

\begin{proof}
	The case $n=0$ is done in Corollary \ref{zerohollocal}. In particular, if $Z$ is smooth, then $\mathrm{R}\underline{\Gamma}_Z(\M^\bullet)\cong i_+i^!\M^\bullet$. Therefore, Corollary \ref{holinv} and Corollary \ref{holKashiwara} imply that $\mathrm{R}\underline{\Gamma}_Z(\M^\bullet)$ is $n$-holonomic whenever $Z$ is smooth, and more generally when $Z$ is an algebraic snc divisor by Proposition \ref{MVforDcap}. By Proposition \ref{triangle}, we then also have $j_+j^!\M^\bullet\in \mathrm{D}_{n-\mathrm{hol}}(\w{\D}_X)$ in that case.
	
	The general case follows as before from a desingularization argument, using Corollary \ref{holinv}, Proposition \ref{compandadj} and Proposition \ref{nholproj}.
\end{proof}

Putting everything together, we have the following:
\begin{thm}
	\label{sixffhol}
	Let $X$, $Y$ be smooth rigid analytic $K$-varieties, and let $f: X\to Y$ be a morphism.
	\begin{enumerate}[(i)]
		\item The duality functor $\mathbb{D}$ sends $\mathrm{D}_{\mathrm{hol}}(\w{\D}_X)$ to $\mathrm{D}_{\mathrm{hol}}(\w{\D}_X)^{\mathrm{op}}$.
		\item The inverse image and extraordinary inverse image functors $f^+$ resp. $f^!$ send $\mathrm{D}_{\mathrm{hol}}(\w{\D}_Y)$ to $\mathrm{D}_{\mathrm{hol}}(\w{\D}_X)$. 
		\item If $f$ is projective, then $f_+$ and $f_!$ send $\mathrm{D}_{\mathrm{hol}}(\w{\D}_X)$ to $\mathrm{D}_{\mathrm{hol}}(\w{\D}_Y)$. (By Proposition \ref{Poincare}, $f_+\cong f_!$ in this case.)
		\item If $Z\subseteq X$ is a closed subvariety with complement $j: U\to X$, then $\mathrm{R}\underline{\Gamma}_Z$ and $j_+j^!$ send $\mathrm{D}_{\mathrm{hol}}(\w{\D}_X)$ to $\mathrm{D}_{\mathrm{hol}}(\w{\D}_X)$.
	\end{enumerate}
\end{thm}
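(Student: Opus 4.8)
The plan is to recognize that this theorem is purely an assembly of the per-level stability results established above, combined with the single structural fact that the duality functor lowers the holonomicity index by one and that a $\C$-complex is holonomic precisely when it is $n$-holonomic for every $n\geq 0$. Thus each clause reduces to applying an already-proven statement uniformly in $n$ and then letting $n$ run.

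For (i), recall that taking $f=\mathrm{id}$ in Definition \ref{defnhol} shows that $\mathbb{D}$ sends any $n$-holonomic complex to an $(n-1)$-holonomic complex. Hence if $\M^\bullet$ is holonomic, i.e. $n$-holonomic for all $n$, then $\mathbb{D}\M^\bullet$ is $(n-1)$-holonomic for all $n\geq 1$, i.e. $m$-holonomic for every $m\geq 0$, and therefore holonomic; the appearance of the opposite category merely records the contravariance of $\mathbb{D}$. For (iv), the preceding Corollary shows that both $\mathrm{R}\underline{\Gamma}_Z$ and $j_+j^!$ preserve $n$-holonomicity for each $n$, so they preserve holonomicity.

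For (ii), the functor $f^!$ preserves $n$-holonomicity for every $n$ by Corollary \ref{holinv}, hence preserves holonomicity. Since $f^+=\mathbb{D}_Xf^!\mathbb{D}_Y$, combining this with (i) (applied on both $X$ and $Y$) shows that $f^+$ also preserves holonomicity. For (iii), Proposition \ref{nholproj} gives that the projective direct image preserves $n$-holonomicity for every $n$, hence holonomicity; and since $f_!\cong f_+$ for projective $f$ by Proposition \ref{Poincare}, the functor $f_!$ is likewise stable.

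The genuine mathematical content of the six-functor formalism for holonomic complexes is already contained in the preceding results, namely projective base change (Theorem \ref{basechange}), Kashiwara's equivalence (Corollary \ref{holKashiwara}), and Poincar\'e duality (Proposition \ref{Poincare}), so the only point that requires attention here is the bookkeeping of the holonomicity index. What must be checked is that the tower condition ``$n$-holonomic for all $n$'' is genuinely preserved under each functor, and this is exactly what the inductive shape of Definition \ref{defnhol} guarantees: a statement proved uniformly in $n$ at the level of $n$-holonomicity upgrades automatically to a statement about holonomicity. Consequently the theorem is a diagram-free assembly, and no further analysis is required.
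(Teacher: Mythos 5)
Your proposal is correct and matches the paper's treatment exactly: the theorem is stated there as a summary ("Putting everything together") whose proof is precisely the assembly you describe, citing Corollary \ref{holinv} for $f^!$, Proposition \ref{nholproj} together with Proposition \ref{Poincare} for projective direct images, the immediately preceding corollary for $\mathrm{R}\underline{\Gamma}_Z$ and $j_+j^!$, and the observation that $\mathbb{D}$ lowers the holonomicity index by one. The index bookkeeping in part (i) and the reduction of $f^+$ to $f^!$ via $f^+=\mathbb{D}_Xf^!\mathbb{D}_Y$ are both handled correctly.
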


In this way, we obtain stability under five of the six operations: duality, both inverse image functors, and both direct image functors if the morphism is projective. We will discuss general issues with the direct image operations at the end of the section, but remark that the operation $j_+j^!$ above is at least a partial remedy. Notably, the sixth functor, tensor product, has not yet made an appearance.

We also have the base change and adjunction results given in Theorem \ref{basechange} and Proposition \ref{compandadj}.

Defining holonomicity in terms of the behaviour under our functors has made these stability properties essentially formal, but the price which we pay is that we do not have an explicit characterization of holonomic complexes. A priori, it is not even clear whether $\mathrm{D}_{\C}(\w{\D}_X)$ is non-zero. We now show that it contains in fact all integrable connections, as one would hope.

For this, we note that instead of divisors, we can also test against smooth closed subvarieties.
\begin{lem}
\label{smoothtest}
Let $\M^\bullet$ be an $(n-1)$-holonomic $\C$-complex on a smooth rigid analytic $K$-variety $X$, $n\geq 1$. If for \textbf{all} morphisms $f: X'\to X$ and smooth Zariski closed subvarieties $Z\subset X'$, both $\mathbb{D}f^!\M^\bullet$ and $\mathbb{D}\mathrm{R}\underline{\Gamma}_Zf^!\M^\bullet$ are $(n-1)$-holonomic, then $\M^\bullet$ is $n$-holonomic.
\end{lem}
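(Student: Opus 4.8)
The plan is to verify the two conditions of Definition \ref{defnhol} directly, exploiting that the hypothesis permits \emph{arbitrary} test morphisms. The condition on $\mathbb{D}f^!\M^\bullet$ for smooth $f$ is immediate, since a smooth morphism is in particular a morphism, and this is a special case of the hypothesis. It therefore remains to treat $\mathbb{D}\mathrm{R}\underline{\Gamma}_Z(f^!\M^\bullet)$ for a smooth morphism $f: X'\to X$ and an \emph{arbitrary} (possibly singular) divisor $Z\subseteq X'$ with complement $j: U\to X'$. Applying $\mathbb{D}$ to the localisation triangle of Proposition \ref{triangle}.(i) and using that $\mathrm{D}_{(n-1)-\mathrm{hol}}(\w{\D}_{X'})$ is triangulated (Lemma \ref{nholtriang}) together with the already-established $(n-1)$-holonomicity of $\mathbb{D}f^!\M^\bullet$, I reduce, via the $2$-out-of-$3$ property, to showing that $\mathbb{D}j_+j^!f^!\M^\bullet$ is $(n-1)$-holonomic.

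To deal with the singularities of $Z$, I would invoke the desingularization already used in Corollary \ref{zerohollocal}: by \cite{Temkin} there is a projective morphism $\rho: X''\to X'$ of smooth rigid spaces, an isomorphism over $U$, such that $U$ is the complement of an algebraic snc divisor $\widetilde{Z}$ in $X''$, with open embedding $\widetilde{j}: U\to X''$. The composition results of Proposition \ref{compandadj} then give $j_+j^!f^!\M^\bullet\cong \rho_+\widetilde{j}_+\widetilde{j}^!(f\rho)^!\M^\bullet$ (noting $\widetilde{j}^!(f\rho)^!=j^!f^!$ and $\rho_+\widetilde{j}_+=j_+$), and since $\rho$ is projective, $\mathbb{D}$ commutes with $\rho_+$ by Proposition \ref{Poincare}. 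As $\rho_+$ preserves $(n-1)$-holonomicity (Proposition \ref{nholproj}), it suffices to prove that $\mathbb{D}\widetilde{j}_+\widetilde{j}^!\N^\bullet$ is $(n-1)$-holonomic on $X''$, where $\N^\bullet=(f\rho)^!\M^\bullet$.

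Here the arbitrary-morphism strength of the hypothesis becomes essential, because $f\rho$ need not be smooth. Applying $\mathbb{D}$ to the localisation triangle for $\widetilde{Z}$ and again using Lemma \ref{nholtriang}, I reduce to checking that $\mathbb{D}\N^\bullet=\mathbb{D}(f\rho)^!\M^\bullet$ and $\mathbb{D}\mathrm{R}\underline{\Gamma}_{\widetilde{Z}}(\N^\bullet)$ are both $(n-1)$-holonomic. The former holds by the hypothesis applied to the morphism $f\rho$. For the latter, since $\widetilde{Z}$ is snc, I would run the Mayer--Vietoris induction of Proposition \ref{MVforDcap} over the components of $\widetilde{Z}$: every multiple intersection occurring is a smooth Zariski closed subvariety $W\subseteq X''$, so each $\mathbb{D}\mathrm{R}\underline{\Gamma}_W((f\rho)^!\M^\bullet)$ is $(n-1)$-holonomic directly by hypothesis, and the triangulated subcategory property propagates this through the Mayer--Vietoris triangles to $\widetilde{Z}$ itself (using that $\mathbb{D}$ is additive and sends distinguished triangles to distinguished triangles). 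Throughout, one confirms that the relevant complexes are $\C$-complexes using that $0$-holonomicity is preserved by $f^!$ (Corollary \ref{zeroholinv}), Lemma \ref{anyclosedsubvar}, and Theorem \ref{Ccomplexprops}.(iv).

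The main obstacle is precisely the passage from the singular divisor $Z$ to smooth test subvarieties: it forces the resolution step, and there the test morphism $f\rho$ ceases to be smooth. This is exactly why the lemma must allow \emph{all} morphisms in its hypothesis rather than only smooth ones, and it is the crux of the argument. A secondary technical point is ensuring that the composition $j_+\cong\rho_+\widetilde{j}_+$ is licensed by Proposition \ref{compandadj} (the same managability situation as in Corollary \ref{zerohollocal}) and that $\mathbb{D}$ genuinely commutes with $\rho_+$ through Proposition \ref{Poincare}.
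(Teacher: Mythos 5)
Your proposal is correct and follows essentially the same route as the paper: reduce via the localisation triangle and Lemma \ref{nholtriang} to $\mathbb{D}j_+j^!f^!\M^\bullet$, handle snc divisors by Mayer--Vietoris over the smooth multiple intersections, and treat a general divisor by Temkin's desingularization together with Proposition \ref{Poincare} and Proposition \ref{nholproj}, which is exactly where the hypothesis for arbitrary (non-smooth) test morphisms $f\rho$ is consumed. Your write-up is in fact slightly more explicit than the paper's about why the full strength of the hypothesis is needed at the snc stage on $X''$.
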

\begin{proof}
Let $f: X'\to X$ be smooth and let $Z\subseteq X'$ be a divisor with complement $j: U\to X'$. If $Z$ is smooth, then $\mathbb{D}f^!\M^\bullet$ and $\mathbb{D}\mathrm{R}\underline{\Gamma}_Z(f^!\M)$ are $(n-1)$-holonomic by assumption, and a Mayer-Vietoris argument yields the same result in the case when $Z$ is an algebraic snc divisor.

In the general case, consider again the resolution of singularities
\begin{equation*}
\begin{xy}
\xymatrix{
U\ar[r]^{\widetilde{j}} \ar[d]_{\mathrm{id}}& X''\ar[d]^\rho\\
U\ar[r]_j & X'
}
\end{xy}
\end{equation*}
where $U\to X''$ is the complement of an snc divisor $\widetilde{Z}$, and $\rho$ is projective.

We remark that in this case, both $\mathbb{D}f^!\M^\bullet$ and $\mathbb{D}\rho^!f^!\M$ are $(n-1)$-holonomic by assumption.

We wish to show that $\mathbb{D}j_+j^!f^!\M$ is $(n-1)$-holonomic. Note that by the above diagram and Proposition \ref{Poincare},
\begin{align*}
\mathbb{D}j_+j^!f^!\M^\bullet&\cong \mathbb{D}\rho_+\widetilde{j}_+\widetilde{j}^!\rho^!f^!\M^\bullet\\
&\cong \rho_+\mathbb{D}\widetilde{j}_+\widetilde{j}^!\rho^!f^!\M^\bullet,
\end{align*}
so by Proposition \ref{nholproj} it suffices to show that $\mathbb{D}\widetilde{j}_+\widetilde{j}^!\rho^!f^!\M$ is $(n-1)$-holonomic. But since $\mathbb{D}\mathrm{R}\underline{\Gamma}_{\widetilde{Z}}(\rho^!f^!\M^\bullet)$ and $\mathbb{D}\rho^!f^!\M^\bullet$ are $(n-1)$-holonomic by assumption, this follows immediately from Lemma \ref{nholtriang}.

Thus $\mathbb{D}j_+j^!f^!\M^\bullet$ is $(n-1)$-holonomic, so that $\mathbb{D}\mathrm{R}\underline{\Gamma}_Z(f^!\M^\bullet)$ is $(n-1)$-holonomic by Lemma \ref{nholtriang}. Therefore, $\M^\bullet$ is indeed $n$-holonomic.
\end{proof}

\begin{thm}
	\label{ICishol}
If $\M$ is an integrable connection on a smooth rigid analytic variety $X$, then $\M$ is holonomic.
\end{thm}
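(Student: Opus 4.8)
The plan is to prove by induction on $n$ that every integrable connection is $n$-holonomic; since holonomicity means $n$-holonomicity for all $n$, this suffices. The base case $n=0$ is exactly Proposition \ref{ICzero}. For the inductive step I would assume that every integrable connection (on an arbitrary smooth rigid analytic $K$-variety) is $(n-1)$-holonomic and deduce $n$-holonomicity using the smooth test criterion of Lemma \ref{smoothtest}: it suffices to check that for every morphism $f\colon X'\to X$ and every smooth Zariski closed subvariety $Z\subseteq X'$, both $\mathbb{D}f^!\M$ and $\mathbb{D}\mathrm{R}\underline{\Gamma}_Z(f^!\M)$ are $(n-1)$-holonomic (the hypothesis that $\M$ is itself $(n-1)$-holonomic being the induction hypothesis).

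The two inputs I would isolate first are: (F1) for any morphism $f\colon X'\to X$, the complex $f^!\M$ is a shift of an integrable connection on $X'$; and (F2) the duality functor $\mathbb{D}$ carries an integrable connection to an integrable connection. For (F1), since $\w{\D}_{X'\to X}=f^*\w{\D}_X=\O_{X'}\widetilde{\otimes}_{f^{-1}\O_X}f^{-1}\w{\D}_X$, the underlying $\O_{X'}$-complex of $f^!\M$ is $\mathbb{L}f^*\M[\mathrm{dim}X'-\mathrm{dim}X]$; as $\M$ is a vector bundle this is just the ordinary pullback bundle (concentrated in one degree), equipped with the pulled-back flat connection, i.e. a shifted integrable connection. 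For (F2), one computes $\mathbb{D}\M$ locally from a Spencer--Koszul resolution of the $\O$-locally free module $\M$ (just as in the proofs of Theorem \ref{pullbackdual} and Proposition \ref{boxdirect}), obtaining the dual connection concentrated in degree zero.

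Given (F1) and (F2), I would verify the two conditions of Lemma \ref{smoothtest} as follows. For $\mathbb{D}f^!\M$: by (F1) $f^!\M$ is a shifted integrable connection, so by (F2) $\mathbb{D}f^!\M$ is again a shifted integrable connection, hence $(n-1)$-holonomic by the induction hypothesis together with the fact that $\mathrm{D}_{(n-1)-\mathrm{hol}}(\w{\D}_{X'})$ is triangulated (Lemma \ref{nholtriang}), hence closed under shifts. For $\mathbb{D}\mathrm{R}\underline{\Gamma}_Z(f^!\M)$ with $Z$ smooth: $f^!\M$ is $0$-holonomic (Corollary \ref{zeroholinv}), so by Proposition \ref{triangle}.(iii) we have $\mathrm{R}\underline{\Gamma}_Z(f^!\M)\cong i_+i^!f^!\M$, where $i\colon Z\to X'$ is the embedding. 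Using $i^!f^!\M\cong(fi)^!\M$ (Proposition \ref{compandadj}.(i)) and (F1) again, $i^!f^!\M$ is a shifted integrable connection on $Z$; hence by Proposition \ref{Poincare} ($i$ being projective)
\begin{equation*}
\mathbb{D}\mathrm{R}\underline{\Gamma}_Z(f^!\M)\cong \mathbb{D}i_+i^!f^!\M\cong i_+\mathbb{D}(fi)^!\M,
\end{equation*}
where $\mathbb{D}(fi)^!\M$ is a shifted integrable connection by (F2), hence $(n-1)$-holonomic by induction, and then $i_+$ of it is $(n-1)$-holonomic by Proposition \ref{nholproj}. Both conditions hold, so Lemma \ref{smoothtest} shows that $\M$ is $n$-holonomic, completing the induction.

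The main obstacle is establishing (F2): that $\mathbb{D}$ takes an integrable connection back to an integrable connection (in degree zero). This is the only genuinely computational step, requiring the local Spencer--Koszul resolution of an $\O$-coherent $\w{\D}$-module and the self-duality of that resolution; everything else is a formal consequence of the base change, composition, Poincaré duality and Kashiwara results already available. A secondary point to be careful about is that (F1) must be verified for arbitrary (not necessarily smooth) $f$, which is why the reduction to the $\O$-module pullback via the transfer bimodule $\w{\D}_{X'\to X}=f^*\w{\D}_X$ is needed, rather than the smooth-case statement used in Proposition \ref{ICzero}.
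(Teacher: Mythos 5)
Your proposal is correct and follows essentially the same route as the paper's proof: induction on $n$ via Lemma \ref{smoothtest}, using that $\mathbb{D}f^!\M$ is a shifted integrable connection for arbitrary $f$, the identification $\mathrm{R}\underline{\Gamma}_Z(f^!\M)\cong i_+i^!f^!\M$ from $0$-holonomicity and Proposition \ref{triangle}, commuting $\mathbb{D}$ past $i_+$ via Proposition \ref{Poincare}, and concluding with Proposition \ref{nholproj}. The only difference is that the paper quotes the combined statement of your (F1) and (F2) directly from \cite[Proposition 7.3]{Bode6Op} rather than re-deriving it.
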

\begin{proof}
By Proposition \ref{ICzero}, any integrable connection is $0$-holonomic. We proceed by induction. Let $n>0$ and suppose that any integrable connection is $(n-1)$-holonomic. If $\M$ is an integrable connection on $X$ and $f: X'\to X$ is any morphism of smooth rigid analytic $K$-varieties, then $\mathbb{D}f^!\M$ is still an integrable connection by \cite[Proposition 7.3]{Bode6Op}, shifted in degree, and hence $(n-1)$-holonomic. If $i: Z\to X'$ is a smooth closed subvariety, then (since $f^!\M$ is $0$-holonomic) $\mathrm{R}\underline{\Gamma}_Z(f^!\M)\cong i_+i^!f^!\M$. By Lemma \ref{smoothtest}, we need to show that $\mathbb{D}i_+i^!f^!\M$ is $(n-1)$-holonomic. But 
\begin{equation*}
\mathbb{D}i_+i^!f^!\M\cong i_+\mathbb{D}i^!f^!\M,
\end{equation*} 
and $\mathbb{D}i^!f^!\M$ is an integrable connection up to degree shift by \cite[Proposition 7.3]{Bode6Op} again, in particular $(n-1)$-holonomic by assumption. We can thus finish the proof by invoking Proposition \ref{nholproj}.
\end{proof}

We make one additional remark about the direct image functor $f_+$. In the case of the structure morphism to a point, $f_+(\O_X)$ computes de Rham cohomology by \cite[Proposition 9.13]{Bode6Op}. In particular, the pathologies of de Rham cohomology in the non-proper case make it clear that holonomicity is not stable under $f_+$ for arbitrary $f$. While one might hope to obtain slightly stronger results by considering an `overconvergent' version, there are several other examples demonstrating the problems with $f_+$:
\begin{enumerate}[(i)]
	\item If $f: X=\Sp K\langle x\rangle\to \Sp K\langle \pi x\rangle=Y$ is the embedding of one closed disc inside a larger disc, it is clear that $f_+\O_X=f_*\O_X$ cannot be a coadmissible $\w{\D}_Y$-module (for example, it contains $\O_Y$ as a dense submodule). In particular, there are partially proper morphisms $f$ such that $f_+\O$ is not a $\C$-complex.
	\item Considering a sum of skyscrapers on points $\{x=p^{-n}\}\subseteq \mathbb{A}^{1, \mathrm{an}}_K$ (or something similar on an open disc) yields a holonomic $\w{\D}$-module with infinite-dimensional de Rham cohomology. Once again, it seems unlikely that this can be addressed with overconvergent variants.
	\item The examples in \cite{Bitoun} describe integrable connections whose pushforward along a Zariski open embedding is not a $\C$-complex. 
\end{enumerate} 

We expect however that holonomicity is preserved under $f_+$ for any \emph{proper} morphism $f$.\section{Further examples and questions}
In this section, we compare the notions of weak holonomicity, $0$-holonomicity and holonomicity through examples. We also indicate some open problems.

Recall that \cite{DcapThree} defines a coadmissible $\w{\D}_X$-module $\M$ to be \textbf{weakly holonomic} if $\mathbb{D}\M$ is concentrated in degree zero. We begin by recalling from \cite[subsection 8.1]{DcapThree} an example of a weakly holonomic $\w{\D}$-module which is not $0$-holonomic.

\begin{lem}
	There exist weakly holonomic modules which are not $0$-holonomic.
\end{lem}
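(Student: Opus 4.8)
The plan is to take for $\M$ the coadmissible $\w{\D}_X$-module constructed in \cite[subsection 8.1]{DcapThree}: this is a weakly holonomic module on a smooth rigid analytic curve $X$, together with a closed point $z\in X$ with embedding $i:\{z\}\to X$, whose extraordinary inverse image $i^!\M$ has infinite-dimensional cohomology over $K$. Weak holonomicity of $\M$ is established in loc.\ cit., so the only thing I would need to supply is the verification that $\M$ fails to be $0$-holonomic, and the point of the argument is that this failure is forced formally by the excision machinery of Section 3.

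To test $0$-holonomicity I would take the smooth morphism $f=\mathrm{id}_X$ and the divisor $Z=\{z\}$ (a closed point of a smooth curve is a smooth divisor), and show that $\mathrm{R}\underline{\Gamma}_Z(\M)$ is not a $\C$-complex. Suppose for contradiction that it is. Since $\M$ is itself a $\C$-complex and $\mathrm{D}_\C(\w{\D}_X)$ is triangulated, the localisation triangle of Proposition \ref{triangle}.(i) then forces $j_+j^!\M$ to be a $\C$-complex as well, where $j: X\setminus\{z\}\to X$ is the complement. As $Z$ is smooth, Proposition \ref{triangle}.(iii) now applies and yields a natural isomorphism $\mathrm{R}\underline{\Gamma}_Z(\M)\cong i_+i^!\M$.

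From here the contradiction is immediate. By assumption $i_+i^!\M\cong \mathrm{R}\underline{\Gamma}_Z(\M)$ is a $\C$-complex, so Proposition \ref{Kashiwaratest} shows that $i^!\M\in \mathrm{D}_\C(\w{\D}_{\{z\}})$, i.e.\ each of its cohomology groups is a coadmissible $\w{\D}_{\{z\}}$-module. But $\w{\D}_{\{z\}}=K$, and a coadmissible $K$-module is nothing more than a finite-dimensional $K$-vector space; this contradicts the infinite-dimensionality of the cohomology of $i^!\M$. Hence $\mathrm{R}\underline{\Gamma}_Z(\M)$ is not a $\C$-complex, and $\M$ is not $0$-holonomic.

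The genuine content of the statement sits entirely in the reference: the only real obstacle is the construction of $\M$ and the computation showing that its extraordinary fibre $i^!\M$ is infinite-dimensional over $K$. Once that input is granted, the failure of $0$-holonomicity is purely formal, relying only on the excision triangle (Proposition \ref{triangle}) and Kashiwara's equivalence for $\C$-complexes. The one small point to double-check is that $z$ is a smooth divisor, so that Proposition \ref{triangle}.(iii) is legitimately available; this is automatic on a smooth curve, and if one prefers to phrase the example on a higher-dimensional ambient space one would instead pick a smooth divisor $Z$ along which $i^!\M$ fails to be coadmissible and run the identical argument.
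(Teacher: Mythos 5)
Your argument is correct and is essentially the paper's: the paper uses the same example from \cite[subsection 8.1]{DcapThree} (written out explicitly as $\M=\w{\D}_X/\w{\D}_X\cdot\prod_{i\geq 0}(1-\pi^i\partial)$ on the closed unit disc, with the surjections onto $\w{\D}_X/\w{\D}_X\cdot\prod_{i=0}^n(1-\pi^i\partial)$ showing the fibre at any point is infinite-dimensional) and likewise concludes non-$0$-holonomicity from the failure of $i^!\M$ to be a $\C$-complex at a point. The only cosmetic difference is that the paper invokes the already-established stability of $0$-holonomicity under arbitrary extraordinary inverse images (Corollary \ref{zeroholinv}), whereas you unfold that stability in this special case by hand via the excision triangle of Proposition \ref{triangle} and the Kashiwara test of Proposition \ref{Kashiwaratest} -- which is precisely the mechanism underlying Proposition \ref{pullbackalongclosed}.
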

\begin{proof}
	Let $X=\Sp K\langle x\rangle$, and let $\partial=\frac{\mathrm{d}}{\mathrm{d}x}$. Then $\theta=\prod_{i=0}^\infty (1-\pi^i\partial)$ defines an element of $\w{\D}_X(X)$, and $\M:=\w{\D}_X/\w{\D}_X\cdot \theta$ is a coadmissible $\w{\D}_X$-module, since finitely presented modules are coadmissible. Let $\A=R\langle x\rangle$, $\pi^n\L=\A\cdot \pi^n\partial$, and let $\D_n$ denote the corresponding sheaf. Then $\D_n\widetilde{\otimes}_{\w{\D}_X} \M\cong \D_n/\D_n\cdot \prod_{i=0}^n (1-\pi^i \partial)$, as the remaining factors in the product are units in $\D_n$. This is a vector bundle of rank $n+1$, so it follows that $\M$ is weakly holonomic. On the other hand, we have a surjection $\M\to \w{\D}_X/\w{\D}_X\cdot \prod_{i=0}^n (1-\pi^i\partial)$ for each $n$. Therefore, the fibre of $\M$ at any point of $X$ is infinite-dimensional. This implies however that for a point $i: x\to X$, $i^!\M$ is not a $\C$-complex. Since $0$-holonomicity is stable under arbitrary extraordinary inverse images, this shows that $\M$ is not $0$-holonomic. 
\end{proof}

This is reassuring in some sense. One main motivation for our definitions was the desire to exclude examples as the above, which have many additional undesirable qualities, e.g. $\M$ above is not of finite length. The following example is more subtle.

\begin{lem}
	\label{typelift}
Let $X=\Sp K\langle x\rangle$ be the closed unit disc and let $j: U=X\setminus \{0\}\to X$. There exists a coherent algebraic $\D_X$-module $\M$ of minimal dimension in the sense of \cite{Mebkhout} with the following properties:
\begin{enumerate}[(i)]
\item $\w{\M}:=\w{\D}_X{\otimes}_{\D_X} \M$ is a coadmissible, weakly holonomic $\w{\D}_X$-module.
\item $\w{\M}|_U$ is an integrable connection, and $\w{\M}\cong j_+j^!\w{\M}$ via the natural morphism.
\item $\w{\M}$ is not $0$-holonomic.
\end{enumerate} 
\end{lem}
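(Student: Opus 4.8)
The plan is to realise $\M$ as the cyclic module $\M=\D_X/\D_X\cdot(x\partial-\lambda)$, where $\lambda\in K$ is a $p$-adic Liouville number with $\lambda\notin\mathbb{Z}$; this is the algebraic avatar of the rank-one connection ``$x^\lambda$''. First I would record that $\M$ is of minimal dimension in the sense of \cite{Mebkhout}: its characteristic variety is $\{x=0\}\cup\{\xi=0\}$, which is one-dimensional. For (i), the module $\w{\M}=\w{\D}_X/\w{\D}_X(x\partial-\lambda)$ is finitely presented, hence coadmissible, and it is weakly holonomic because the side-changed dual $\mathbb{D}\w{\M}$ is again a single cyclic module, concentrated in degree zero; the underlying grade/dimension computation is precisely the one in \cite{DcapThree}.

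For (ii), over $U$ the element $x$ is invertible, so the defining relation rewrites as $\partial e=(\lambda/x)e$ and exhibits $\w{\M}|_U$ as the free rank-one $\O_U$-module $\O_U\cdot e$ with this connection, i.e. an integrable connection. To obtain $\w{\M}\cong j_+j^!\w{\M}$ I would feed this into the localisation triangle $\mathrm{R}\underline{\Gamma}_{\{0\}}(\w{\M})\to\w{\M}\to j_+j^!\w{\M}\to$ of Proposition \ref{triangle}.(i) and show $\mathrm{R}\underline{\Gamma}_{\{0\}}(\w{\M})=0$. Since $\{0\}$ is a divisor, this reduces to checking that multiplication by $x$ is bijective on $\w{\M}$: surjectivity follows from $1\in x\w{\D}_X+\w{\D}_X(x\partial-\lambda)$ (as $x\partial-\lambda\equiv-\lambda$ modulo $x\w{\D}_X$ and $\lambda$ is a unit), while injectivity follows from $\w{\M}|_U$ being a connection together with $\lambda\notin\mathbb{Z}$, which prevents any nonzero torsion section supported at the origin.

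The delicate point is (iii), and the first thing to stress is that the obstruction is \emph{not} visible at the origin: property (ii) and Proposition \ref{triangle}.(iii) force $i^!\w{\M}=0$ for the inclusion $i\colon\{0\}\to X$, so the naive fibre is perfectly well-behaved. This is exactly what distinguishes this example from the previous one. The failure of $0$-holonomicity must therefore be detected in a family. I would argue by contradiction: were $\w{\M}$ $0$-holonomic, then for every smooth $f\colon X'\to X$ and every divisor $Z\subseteq X'$ the complex $\mathrm{R}\underline{\Gamma}_Z(f^!\w{\M})$ would be a $\C$-complex (Corollary \ref{zeroholinv}, Lemma \ref{anyclosedsubvar}); the plan is to exhibit one such pair, with $Z$ passing through the origin, for which coadmissibility fails. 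The mechanism is the collapse of the generic radius of convergence of $L_\lambda$ as one approaches $0$ when $\lambda$ is Liouville: base-changing to the level-$n$ sheaves, the relations $x\partial^k e=(\lambda-k+1)\partial^{k-1}e$ carry the scalars $(\lambda-k+1)^{-1}$, whose $p$-adic absolute values are unbounded along the Liouville approximations of $\lambda$, so the coherent $\D_n$-pieces do not assemble into a coadmissible inverse limit. Turning this heuristic into the required estimate, following \cite{DcapThree}, is the main obstacle; the rest of the argument is formal, relying only on Proposition \ref{triangle}, Corollary \ref{zeroholinv}, and the coadmissibility and weak-holonomicity inputs from \cite{DcapThree}.
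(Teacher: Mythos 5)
Your general shape is right (a cyclic module $\w{\D}_X/\w{\D}_X(x\partial-\lambda)$, with the results of \cite{Bitoun} doing the analytic work), but the choice of $\lambda$ is wrong, and it makes the proposal internally inconsistent. If $\lambda$ is a $p$-adic Liouville number (type zero in the sense of \cite[Definition 3.1]{Bitoun}), then by \cite[Theorem 1.3]{Bitoun} the pushforward $j_*(\O_U x^\lambda)$ is \emph{not} coadmissible, so property (ii) fails: $j_+j^!\w{\M}$ is not even a $\C$-complex, let alone isomorphic to $\w{\M}$. (Your own mechanism for (iii) — the scalars $(\lambda-k+1)^{-1}$ blowing up along integer approximations of $\lambda$ — is exactly the computation showing this, and it already happens on $X$ itself, contradicting your correct observation that the obstruction should not be visible there.) Relatedly, your argument for (ii) via bijectivity of multiplication by $x$ conflates algebraic local cohomology with the functor $\mathrm{R}\underline{\Gamma}_{\{0\}}$ of this paper, which is the fibre of $\w{\M}\to\mathrm{R}j_*(\w{\M}|_U)$ with $j_*$ the \emph{analytic} pushforward from the punctured disc; its vanishing is governed by convergence estimates, not by $x$-torsion, and is precisely the nontrivial content of \cite[Theorems 1.3, 5.2]{Bitoun}, which require $\lambda$ to be of \emph{positive} type (non-Liouville).

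The paper threads this needle by choosing $\lambda$ of positive type such that $p\lambda$ has type zero (such $\lambda$ exist: take a type-zero scalar and divide by $p$ repeatedly until the type becomes positive, which happens once the absolute value exceeds $1$). Then (ii) holds on $X$ by \cite{Bitoun}, and (iii) is detected only after pulling back along the ramified degree-$p$ cover $f: \Sp K\langle x^{1/p}\rangle\to X$: one gets the connection $(x^{1/p})^{p\lambda}$ on the punctured disc, whose pushforward is not coadmissible because $p\lambda$ is Liouville. Note that $f$ is not smooth at the origin, so this test is not available directly from the definition of $0$-holonomicity; it is legitimate because $f^!$ preserves $0$-holonomicity for \emph{arbitrary} morphisms (Corollary \ref{zeroholinv}, via Proposition \ref{pullbackalongclosed}). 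Your plan of finding a smooth $f$ and a divisor through the origin does not obviously produce such a degeneration, and you acknowledge that this step — the actual content of (iii) — remains to be carried out. As it stands, the proposal has a genuine gap at its central point.
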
  
\begin{proof}
Let $\lambda\in K$ be a scalar of positive type (cf. \cite[Definition 3.1]{Bitoun}) such that $p\lambda$ is of type zero. Such a $\lambda$ exists, e.g. by taking a scalar of type zero and repeatedly multiplying with $p^{-1}$, noting that any $\mu\in K$ with $|\mu|>1$ has type $1$. Now set $\M=\D_X/\D_X\cdot(x\partial-\lambda)$, so that $\w{\M}=\w{\D}_X/\w{\D}_X\cdot(x\partial-\lambda)$.

Then $\M$ is of minimal dimension, since $\mathcal{H}om_{\D_X}(\M, \D_X)=0$, and $\w{\M}$ is coadmissible and weakly holonomic by \cite[Proposition 7.2]{DcapThree}. Moreover, $\w{\M}|_U$ is the integrable connection $\O_Ux^\lambda$ from \cite[section 5]{Bitoun}, so that (ii) follows from \cite[Theorem 1.3, Theorem 5.2]{Bitoun}.

To show (iii), consider the morphism $f: X'=\Sp K\langle x^{1/p}\rangle \to X$. Let $j':U'=X'\setminus \{U\}\to X'$, $f_U: U'\to U$. If $\w{\M}$ was $0$-holonomic, then $j'_+j'^!(f^!\w{\M})$ would be a $\C$-complex, and by Proposition \ref{compandadj}, this would be isomorphic to
\begin{equation*}
j'_+(f_U^! \w{\M}|_U)\cong j'_*(\O_{U'} (x^{1/p})^{p\lambda}).
\end{equation*} 
But since $p \lambda$ is not of positive type, \cite[Theorem 1.3]{Bitoun} implies that this is not coadmissible.
\end{proof}

This leads to the following questions.
\begin{ques}
Let $X=\Sp K\langle x\rangle$ and let $\lambda\in K$. Let $\M_\lambda=\w{\D}_X/\w{\D}_X\cdot(x\partial-\lambda)$. For which $\lambda$ is $\M_\lambda$ a $0$-holonomic $\w{\D}_X$-module? For which $\lambda$ is it holonomic?
\end{ques}

\begin{ques}
	\label{holvsweakly}
Is every holonomic $\w{\D}$-module weakly holonomic? Is every $0$-holonomic $\w{\D}$-module weakly holonomic?
\end{ques}

Question \ref{holvsweakly} is also related to the following:
\begin{lem}
	\label{holcohom}
	Suppose that every $0$-holonomic $\w{\D}$-module is weakly holonomic. Then a bounded $\C$-complex $\M^\bullet\in \mathrm{D}^b_{\C}(\w{\D}_X)$ on a smooth rigid analytic $K$-variety $X$ is holonomic if and only if $\mathrm{H}^q(\M^\bullet)$ is holonomic for each $q$.
\end{lem}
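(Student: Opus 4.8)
The plan is to prove the two implications separately, with all the force of the hypothesis concentrated in the forward direction. The reverse direction (each $\mathrm{H}^q(\M^\bullet)$ holonomic $\Rightarrow$ $\M^\bullet$ holonomic) is purely formal and needs no hypothesis: since $\mathrm{D}_{\mathrm{hol}}(\w{\D}_X)$ is a triangulated subcategory (Lemma \ref{nholtriang}) and $\M^\bullet$ is bounded, an induction on the cohomological amplitude using the truncation triangles $\tau^{\leq q-1}\M^\bullet \to \tau^{\leq q}\M^\bullet \to \mathrm{H}^q(\M^\bullet)[-q] \to$ assembles $\M^\bullet$ out of (shifts of) its holonomic cohomology groups.

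For the forward direction I would establish, for every $n\geq 0$, the statement $\mathrm{Claim}(n)$: every bounded $n$-holonomic $\C$-complex, on an arbitrary smooth rigid analytic $K$-variety, has $n$-holonomic cohomology groups. Since a holonomic complex is $n$-holonomic for all $n$, proving all $\mathrm{Claim}(n)$ yields the result. The base case $\mathrm{Claim}(0)$ is Corollary \ref{zeroholcohom}. The crucial observation allowing the hypothesis to enter is that any $n$-holonomic complex is $0$-holonomic, so by Corollary \ref{truncatezerohol} and the standing hypothesis its cohomology groups are weakly holonomic throughout.

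The conceptual heart, and the step I expect to be the main obstacle, is a commutation lemma: if $\N^\bullet\in \mathrm{D}^b_\C(\w{\D}_X)$ has weakly holonomic cohomology groups, then $\mathrm{H}^q(\mathbb{D}\N^\bullet)\cong \mathbb{D}\,\mathrm{H}^{-q}(\N^\bullet)$ for all $q$. I would prove this by induction on the amplitude, applying the contravariant triangulated functor $\mathbb{D}$ to $\tau^{\leq b-1}\N^\bullet \to \N^\bullet \to \mathrm{H}^b(\N^\bullet)[-b] \to$: weak holonomicity of $\mathrm{H}^b(\N^\bullet)$ forces $\mathbb{D}\mathrm{H}^b(\N^\bullet)[b]$ into the single degree $-b$, which is disjoint from the degree range $[-b+1,-a]$ occupied by $\mathbb{D}\tau^{\leq b-1}\N^\bullet$ (known by induction). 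The long exact sequence then splits into isomorphisms, giving the claim. This is exactly where weak holonomicity, and hence the hypothesis, is indispensable: without it $\mathbb{D}$ of a cohomology module would spread over several degrees and the off-diagonal contributions would survive. (Only an isomorphism of objects, not naturality, is needed downstream.)

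With this commutation lemma the inductive step of $\mathrm{Claim}(n)$ is mechanical. Let $\M^\bullet$ be $n$-holonomic; by $\mathrm{Claim}(n-1)$ each $\mathrm{H}^q(\M^\bullet)$ is already $(n-1)$-holonomic, so only the two duality conditions remain. Fix a smooth $f\colon X'\to X$ and a divisor $Z\subseteq X'$ with complement $j$. As in the proof of Corollary \ref{truncatezerohol}, $f^!$ and $j_+j^!$ are exact up to a fixed shift on the relevant coadmissible modules and preserve $0$-holonomicity (Corollaries \ref{zeroholinv}, \ref{zerohollocal}); hence $f^!\M^\bullet$ and $j_+j^!f^!\M^\bullet$ are bounded complexes whose cohomology groups are $f^!$, resp.\ $j_+j^!f^!$, of those of $\M^\bullet$, and so are weakly holonomic. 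The commutation lemma then identifies the cohomology of $\mathbb{D}f^!\M^\bullet$ and of $\mathbb{D}j_+j^!f^!\M^\bullet$ with $\mathbb{D}f^!\mathrm{H}^p(\M^\bullet)$ and $\mathbb{D}j_+j^!f^!\mathrm{H}^p(\M^\bullet)$ (each weakly holonomic, being the dual of a weakly holonomic module via $\mathbb{D}^2\cong\mathrm{id}$ of Theorem \ref{Ccomplexprops}.(iv)); in particular these duals are bounded. Since $\M^\bullet$ is $n$-holonomic, both $\mathbb{D}f^!\M^\bullet$ and $\mathbb{D}j_+j^!f^!\M^\bullet$ are $(n-1)$-holonomic, so $\mathrm{Claim}(n-1)$ forces all their cohomology groups to be $(n-1)$-holonomic. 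Thus $\mathbb{D}f^!\mathrm{H}^p(\M^\bullet)$ and $\mathbb{D}j_+j^!f^!\mathrm{H}^p(\M^\bullet)$ are $(n-1)$-holonomic for every $p$; feeding the latter through the localisation triangle of Proposition \ref{triangle} together with Lemma \ref{nholtriang} shows $\mathbb{D}\mathrm{R}\underline{\Gamma}_Z(f^!\mathrm{H}^p(\M^\bullet))$ is $(n-1)$-holonomic as well. Both defining conditions for $n$-holonomicity of $\mathrm{H}^p(\M^\bullet)$ are thereby verified, closing the induction.
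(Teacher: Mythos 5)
Your proposal is correct and follows essentially the same route as the paper: the base case is Corollary \ref{zeroholcohom}, and the inductive step rests on the identification $\mathrm{H}^q(\mathbb{D}j_+j^!f^!\M^\bullet)\cong \mathbb{D}j_+j^!f^!(\mathrm{H}^{q+\mathrm{dim}X'-\mathrm{dim}X}(\M^\bullet))$, obtained from exactness of $f^!$, $j^!$, $j_+$ on coadmissible modules together with exactness of $\mathbb{D}$ on weakly holonomic modules — which is exactly where the standing hypothesis enters in both arguments. Your ``commutation lemma'' merely spells out, via truncation triangles, the degree-separation argument that the paper leaves implicit in the phrase ``$\mathbb{D}$ is exact on weakly holonomic modules,'' so there is no substantive difference.
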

\begin{proof}
	By Corollary \ref{zeroholcohom}, a bounded $\C$-complex is $0$-holonomic if and only if its cohomology groups are $0$-holonomic.
	
	If $f: X'\to X$ is a smooth morphism and $Z\subseteq X'$ is a divisor with complement $j: U\to X'$, then $f^!, j^!$ and $j_+$ are exact on coadmissible modules, and $\mathbb{D}$ is exact on weakly holonomic modules.
	
	Hence, if $\M^\bullet\in \mathrm{D}_{\C}^b(\w{\D}_X)$ such that $\mathrm{H}^q(\M^\bullet)$ is $0$-holonomic for each $q$, then
	\begin{equation*}
		\mathrm{H}^q(\mathbb{D}j_+j^!f^!\M^\bullet)\cong \mathbb{D}j_+j^!f^!(\mathrm{H}^{q+\mathrm{dim}X'-\mathrm{dim}X}(\M^\bullet))\in \C_{X'}.
	\end{equation*}
	In particular, $\mathbb{D}f^!\M^\bullet$ and $\mathrm{D}j_+j^!f^!\M^\bullet$ are $\C$-complexes by \cite[Corollary 8.7]{Bode6Op}.
	
	The description of cohomology groups above and an inductive argument now allows us to prove that bounded $\C$-complexes are $n$-holonomic if and only if their cohomology groups are $n$-holonomic, and the result follows.
\end{proof}

The boundedness assumption is necessary, however:

\begin{lem}
	\label{unboundedconnection}
	Let $X=\Sp K\langle x\rangle$. There exists a $\C$-complex $\M^\bullet$ on $X$ such that $\mathrm{H}^q(\M^\bullet)$ is an integrable connection for each $q$, but $\M^\bullet$ is not $0$-holonomic.
\end{lem}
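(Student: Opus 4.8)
The plan is to produce an \emph{unbounded} complex, exploiting the fact that the boundedness hypothesis in Corollary \ref{zeroholcohom} is genuinely needed: for an unbounded complex the cohomology of $\D_n\widetilde{\otimes}^{\mathbb{L}}_{\w{\D}_X}\M^\bullet$ need not be computed termwise from the cohomology of $\M^\bullet$, since the hyper-Tor spectral sequence for the functor $\D_n\widetilde{\otimes}^{\mathbb{L}}_{\w{\D}_X}-$, which has infinite Tor-dimension, may fail to converge to it. Concretely, I would build $\M^\bullet$ on $X=\Sp K\langle x\rangle$ as the homotopy limit of a tower $(A_k^\bullet)_k$ of bounded $\C$-complexes whose amplitudes grow with $k$ and whose cohomology groups are (trivial, say) integrable connections $\O_X$ in degrees $0,\dots,k$. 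The transition maps $A_{k+1}^\bullet\to A_k^\bullet$ are to be twisted by right multiplication by the operators $s_k=1-\pi^k\partial\in\w{\D}_X(X)$, which are invertible in $\D_n$ as soon as $k>n$, exactly as in the first Lemma of this section.

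First I would check that $\M^\bullet$ is a $\C$-complex. By construction each $\mathrm{H}^q(\M^\bullet)$ is an integrable connection, hence coadmissible, so by \cite[Proposition 8.5]{Bode6Op} it suffices to show that $\M^\bullet_n=\D_n\widetilde{\otimes}^{\mathbb{L}}_{\w{\D}_X}\M^\bullet$ is bounded for each $n$. This is the crux: after base change to level $n$ the twisting operators $s_k$ become units for $k>n$, so the tower $\D_n\widetilde{\otimes}^{\mathbb{L}}_{\w{\D}_X}A_k^\bullet$ stabilises, and I would argue that $\M^\bullet_n$ agrees with this stable, bounded value. There is no contradiction with Corollary \ref{zeroholcohom}: the cohomology of $\M^\bullet_n$ does \emph{not} equal $\bigoplus_q \mathrm{H}^q(\M^\bullet)\widetilde{\otimes}_{\w{\D}_X}\D_n$, precisely because $\M^\bullet$ is unbounded and $\D_n\widetilde{\otimes}^{\mathbb{L}}_{\w{\D}_X}-$ has infinite Tor-dimension.

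It then remains to show that $\M^\bullet$ is not $0$-holonomic, and here the asymmetry between the two base-change functors is decisive. Let $i\colon\{0\}\hookrightarrow X$ be the inclusion of the origin, a smooth closed embedding of codimension one; note $\{0\}$ is a divisor, so this is a legitimate test object in the definition of $0$-holonomicity (with $f=\mathrm{id}$, $Z=\{0\}$). By Corollary \ref{zeroholinv}, if $\M^\bullet$ were $0$-holonomic then $i^!\M^\bullet$ would again be $0$-holonomic, in particular a $\C$-complex, hence bounded over the point. But $\mathbb{L}i^*$ is a regular embedding pullback of Tor-amplitude $[-1,0]$, so its hypercohomology spectral sequence converges even on the unbounded $\M^\bullet$, and since each $\mathrm{H}^q(\M^\bullet)$ is locally free the higher Tor-terms vanish; hence $\mathrm{H}^q(i^!\M^\bullet)\cong i^*\mathrm{H}^{q-1}(\M^\bullet)\neq 0$ for infinitely many $q$. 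Thus $i^!\M^\bullet$ is unbounded, a contradiction. Equivalently one may observe via the localisation triangle of Proposition \ref{triangle} that $\mathrm{R}\underline{\Gamma}_{\{0\}}(\M^\bullet)$ cannot be a $\C$-complex.

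The one genuinely delicate point is thus the boundedness of $\M^\bullet_n$ in the second step: unlike the finite-Tor-dimension functor $\mathbb{L}i^*$, the functor $\D_n\widetilde{\otimes}^{\mathbb{L}}_{\w{\D}_X}-$ does not commute with cohomology on unbounded input, and arranging that it nonetheless produces a bounded output at every finite level while the total complex stays unbounded is exactly what the twisted tower must guarantee. I expect this to reduce, once the $A_k^\bullet$ are written down explicitly, to the invertibility of $s_k$ in $\D_n$ for $k>n$ together with a Mittag--Leffler, i.e. $\varprojlim^1$-vanishing, argument for the resulting towers of coherent $\D_n$-modules.
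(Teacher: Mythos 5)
There is a genuine gap, and it sits at the heart of your construction. You never actually write down the complexes $A_k^\bullet$ (you defer the key boundedness verification to ``once the $A_k^\bullet$ are written down explicitly''), and, more seriously, the mechanism you are counting on cannot work. You want every $\mathrm{H}^q(\M^\bullet)$ to be $\O_X$ and the boundedness of $\M^\bullet_n=\D_n\widetilde{\otimes}^{\mathbb{L}}_{\w{\D}_X}\M^\bullet$ to come from a failure of the hyper-Tor spectral sequence to converge on an unbounded complex. But coadmissible modules are acyclic for $\D_n\widetilde{\otimes}_{\w{\D}_X}-$ (\cite[Corollary 5.38]{Bode6Op}), and this functor, being a tensor product and hence colimit-preserving, commutes with the homotopy colimit $\M^\bullet\cong \mathrm{hocolim}_m\, \tau^{\leq m}\M^\bullet$; applying the (convergent) spectral sequence to each bounded-above truncation and passing to the colimit yields $\mathrm{H}^q(\M^\bullet_n)\cong \mathrm{H}^q(\M^\bullet)\widetilde{\otimes}_{\w{\D}_X}\D_n$ for \emph{every} complex with coadmissible cohomology, bounded or not --- an identity the paper itself uses without any boundedness hypothesis in the proof of Proposition \ref{Kashiwaratest}. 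So if $\mathrm{H}^q(\M^\bullet)=\O_X$ for all $q\geq 0$, then $\mathrm{H}^q(\M^\bullet_n)\cong \D_n\widetilde{\otimes}_{\w{\D}_X}\O_X\neq 0$ for all $q\geq 0$, and $\M^\bullet_n$ is unbounded: your $\M^\bullet$ would not be a $\C$-complex in the first place. (Separately, your claim that $\M^\bullet_n$ ``agrees with the stable value'' of the tower $\D_n\widetilde{\otimes}^{\mathbb{L}}A_k^\bullet$ requires commuting the colimit-preserving functor $\D_n\widetilde{\otimes}^{\mathbb{L}}-$ past a homotopy \emph{limit}, and the $\varprojlim^1$ terms could also disturb the asserted cohomology of $\M^\bullet$; but the objection above is structural.)

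The fix is to make the cohomology groups themselves die at each finite level, rather than trying to engineer cancellation inside the complex. This is what the paper does: take $\M^q=\w{\D}_X/\w{\D}_X\cdot(\pi^q\partial-1)$ for $q\geq 0$, zero for $q<0$, with all differentials zero. Each $\M^q$ is a rank-one integrable connection, yet $\D_n\widetilde{\otimes}_{\w{\D}_X}\M^q=0$ for $n<q$ because $1-\pi^q\partial$ is a unit in $\D_n$; hence $\M^\bullet_n$ is bounded for every $n$ and $\M^\bullet$ is a $\C$-complex, with no spectral-sequence subtlety at all. Your final step --- pulling back to the origin, where $i^!\M^\bullet$ is unbounded because each fibre $i^*\M^q$ is a nonzero finite-dimensional vector space, contradicting $i^!\M^\bullet\in \mathrm{D}_{\C}(\Sp K)$ --- is correct and coincides with the paper's argument; you have the right test functor and the right operators $1-\pi^k\partial$, but they must be used as the defining relations of the cohomology modules, not as twists on the differentials.
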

\begin{proof}
	We use the same notation as in the beginning of the section.
	
	For $n\geq 0$, let $\M^n:=\w{\D}_X/\w{\D}_X\cdot (\pi^n\partial-1)$. For $n<0$, let $\M^n=0$, and consider the chain complex $\M^\bullet$, with each morphism the zero map. If $n\geq 0$, then $\M^n$ is an integrable connection of rank $1$ such that $\D_m\widetilde{\otimes}_{\w{\D}_X}\M^n=0$ for $m<n$. In particular, $\M^\bullet$ is a $\C$-complex. Since pulling back to the origin yields an unbounded complex and not an object in $\mathrm{D}_{\C}(\Sp K)=\mathrm{D}_{\mathrm{f.d.}}^b(\mathrm{Vect}_K)$, $\M^\bullet$ is not $0$-holonomic.
\end{proof}

\begin{ques}
	Is every holonomic complex on a smooth affinoid bounded?
\end{ques}

As in the theory of arithmetic $\mathscr{D}$-modules, we expect to obtain a more explicit characterization of holonomic modules as those $\C$-complexes which are constructible out of integrable connections in a suitable sense. The example in Lemma \ref{typelift} already illustrates that the correct notion of constructibility will have to encode some additional `regularity' condition at the boundary of strata.

Inspired by this heuristic, we ask the following.
\begin{ques}
	\label{consques}
	\leavevmode
	\begin{enumerate}[(i)]
		\item Is there a notion of `constructibility' such that a $\C$-complex is holonomic if and only if it is constructible out of integrable connections?
		\item Is every holonomic $\w{\D}$-module of finite length?
	\end{enumerate}
\end{ques}

We remark that the question of constructibility is also crucial for the final of our six operations, tensor products. In \cite{CaroTsuzuki}, Caro--Tsuzuki proved stability of overholonomicity under tensor products by showing that the notion of overholonomicitiy is equivalent to being constructible out of overconvergent $F$-isocrystals in a suitable sense, and then using a d\'evissage argument. One may hope that a similar approach can be implemented in our setting.

\end{document}